\documentclass{article}
\usepackage[T5,T1,OT1]{fontenc}
\usepackage{times}
\usepackage[margin=1in]{geometry}
\usepackage{natbib}
\usepackage{amsmath,amssymb,amsthm,mathtools,bm}
\usepackage{booktabs}
\usepackage{longtable}
\usepackage{graphicx}
\usepackage{array}
\usepackage{microtype}
\usepackage[hypertexnames=false]{hyperref}
\hypersetup{
  hidelinks,
  pdftitle={Decentralized Optimization with Cross-Coupled Mixed Affine Constraints},
  pdfauthor={Ewsey Obzherin, Ilya Khomchenko, Natalia Shelegeda, Nhat Trung Nguyen, Demyan Yarmoshik, Alexander Rogozin, Alexander Gasnikov}
}
\usepackage{url}
\usepackage{enumitem}
\usepackage{xcolor}
\usepackage{algorithm}
\usepackage{float}
\usepackage{algpseudocode}

\newtheorem{theorem}{Theorem}[section]
\newtheorem{lemma}[theorem]{Lemma}
\newtheorem{proposition}[theorem]{Proposition}
\newtheorem{corollary}[theorem]{Corollary}
\newtheorem{assumption}{Assumption}

\newcommand{\R}{\mathbb{R}}
\newcommand{\Range}{\operatorname{Range}}
\newcommand{\Ker}{\operatorname{Ker}}
\newcommand{\diag}{\operatorname{diag}}
\newcommand{\col}{\operatorname{col}}

\newcommand{\chix}{\chi_{\times}}
\newcommand{\chiJ}{\chi_{\times,J}}
\newcommand{\CCMAC}{\textsc{CC-MAC}}
\newcommand{\eps}{\varepsilon}
\newcommand{\one}{\mathbf{1}}
\newcommand{\Wp}{\mathcal W}
\newcommand{\cF}{c_F}
\newcommand{\RF}{\operatorname{RF}}
\newcommand{\UF}{\operatorname{UF}}

\title{Decentralized Optimization with Cross-Coupled Mixed Affine Constraints}
\author{
Ewsey Obzherin$^{1,2}$, Ilya Khomchenko$^{1}$, Natalia Shelegeda$^{1}$,\\
Nhat Trung Nguyen$^{1}$, Demyan Yarmoshik$^{1}$, Alexander Rogozin$^{1}$,\\
Alexander Gasnikov$^{1,3}$\\[0.6em]
\small $^{1}$Moscow Independent Research Institute of Artificial Intelligence\\
\small $^{2}$FusionBrain Lab\\
\small $^{3}$Innopolis University
}
\date{}

\begin{document}
\maketitle

\begin{abstract}
We study decentralized optimization with cross-coupled mixed affine constraints, where local and shared variables interact through two affine channels. We show that the intrinsic difficulty of combining separately well-conditioned channels is governed by their Friedrichs angle. This geometry induces a cross-coupling factor that cannot be removed by channelwise preconditioning and governs the additional affine-oracle and communication complexity. We develop an accelerated decentralized method with matching minimax guarantees in the smooth strongly convex regime and extend the framework to smooth and nonsmooth convex objectives. Experiments confirm the predicted dependence on cross-channel geometry and network conditioning.
\end{abstract}

\section{Introduction}
Decentralized first-order optimization is 
a popular approach for solving problems where the data is distributed across compute nodes, and the communication between nodes is slow, unreliable or restricted by privacy concerns \citep{lian2017can,gorbunov2022recent}.
It is best studied in the \textit{consensus optimization} case where the global variable $x$ is shared by locally stored terms $f_i(x)$ of the objective  $\min_x \sum_i f_i(x)$, which corresponds to data-parallel learning.
This setup has received thorough practical examination \citep{assran2019stochastic,ying2021bluefog,yuan2022decentralized} and has a well-developed complexity theory with optimal algorithms and matching lower complexity bounds \citep{scaman2017optimal,scaman2018nonsmooth,kovalev2020optimal,tyurin2026optimality}. 

While in consensus optimization all compute nodes work with the same shared variable $x$, many practical applications require more general coupling between variables.
Recently, optimal algorithms were established for the case of \textit{coupled constraints}, where local variables $x_i$ are subject to a kind of resource-allocation constraint 
$\min_x \sum_i f_i(x_i) \text{ s.t. } \sum_i (A_i x_i - b_i) = 0$ \citep{yarmoshik2025coupled,yarmoshik2026mixed}.
This is a generalization of consensus optimization, which also makes it possible to express model-parallel learning formulations and different sharing constraints in control and operations research \citep{yarmoshik2026mixed, boyd2011distributed}.

In these settings, optimal complexities are described by the objective, network, and affine condition numbers \citep{scaman2017optimal,scaman2018nonsmooth,salim2022optimal,yarmoshik2025coupled,yarmoshik2026mixed}. 
Mathematically, the core question is how the conditioning of one problem component affects the others. We study it for local variables $x_i$ and a shared variable $z$. Existing optimal methods cover coupled constraints $\sum_i(A_i x_i-b_i)=0$ \citep{yarmoshik2025coupled} and the mixed-affine model in which $z$ enters only node-wise constraints \citep{yarmoshik2026mixed}. For $n$ agents, we define the cross-coupled mixed-affine problem as
\begin{equation}
\boxed{
\begin{aligned}
\min_{x_1,\ldots,x_n,z}\quad &\sum_{i=1}^n f_i(x_i,z),\\
\text{s.t.}\quad &\sum_{i=1}^n(A_i x_i+B_i z-b_i)=0,\\
& C_i x_i=c_i,\qquad D_i z=e_i,\qquad i=1,\ldots,n.
\end{aligned}}
\tag{\CCMAC}
\label{eq:ccmac}
\end{equation}
Thus $z$ enters both the aggregate equality and the node-wise shared-variable constraints. The canonical reduction separates these equations into two operators $K_1$ and $K_2$. Let $U=\Range(K_1^\top)$ and $V=\Range(K_2^\top)$; for any subspace $S$, let $P_S$ denote its Euclidean orthogonal projector. After exact separate left whitening and stacking the channels as $\widehat K=[\widehat K_1;\widehat K_2]$, the joint normal operator is
\begin{equation}
\widehat K^\top\widehat K=P_U+P_V.
\label{eq:intro-projector-sum}
\end{equation}
When $U$ and $V$ contain nearly aligned but nonidentical directions, the smallest positive eigenvalue of \eqref{eq:intro-projector-sum} can be arbitrarily small. Thus two channels can be separately ideal and jointly ill conditioned. Exact overlap is benign---a common direction contributes eigenvalue $2$---whereas \emph{near-overlap} is the source of difficulty.

\paragraph{Where the CC-MAC structure arises.}
The formulation is useful when a system has genuinely local decisions and a shared decision, and both must enter the same aggregate equality. Three representative examples are:
\begin{itemize}[leftmargin=1.2em,itemsep=0.5pt,topsep=1pt,parsep=0pt]
\item \emph{Vertical federated learning.} Agent $i$ owns a feature block and its weight $x_i$, while $z$ is the shared prediction vector, as in vertically partitioned federated learning \citep{hardy2017private}. The identity $Xx-z=0$ can be written as $\sum_i(X_ix_i-\alpha_i z)=0$ with $\sum_i\alpha_i=1$, so $A_i=X_i$ and $B_i=-\alpha_iI$. Consensus or admissibility constraints on copies of $z$ form the second channel. This is also the full-data model used in Section~\ref{sec:experiments}.
\item \emph{Networked resource allocation and control.} Local dispatch or control variables $x_i$ must satisfy equipment equations $C_ix_i=c_i$, while a shared reserve, reference trajectory, or market schedule $z$ satisfies $D_iz=e_i$. Aggregate balance has the form $\sum_i(A_ix_i+B_iz-b_i)=0$: the shared schedule changes every node's contribution to the same system-wide balance, as in distributed optimization and ADMM models \citep{boyd2011distributed}.
\item \emph{Personalized models with global calibration.} Local corrections $x_i$ are combined with a common model $z$ \citep{hanzely2020personalized,liang2020localglobal}. A population-level calibration, fairness, or moment-matching equality aggregates local statistics and the common predictor as $\sum_i(A_ix_i+B_iz-b_i)=0$ \citep{agarwal2018fair}; local identifiability and shared-policy equalities remain in the second channel.
\end{itemize}
In all three cases $B_i\ne0$ is structural rather than a cosmetic reparameterization: deleting it changes the feasible model, while merging the two equation classes changes which operations are locally available and therefore changes the oracle being charged.

Setting $B_i=0$ recovers the previous mixed-affine model; in our canonical reformulation the two channel row spaces are then orthogonal and the cross-coupling factor equals one. This means only that no \emph{additional cross-channel} conditioning remains after the channels are normalized separately. It does not make the primitive affine realization unit-cost: products with $A_i,C_i,D_i$ and the communication needed to implement a normalized channel remain charged, while $N_B=0$.

The two canonical \CCMAC{} channels are block operators and may each contain arbitrarily many affine equations. After separate normalization, exact common row directions are spectrally benign while near-overlap is hard. If $c_F$ is the Friedrichs cosine after removing exact common directions and $\theta_F:=\arccos c_F\in(0,\pi/2]$ (with $\theta_F=\pi/2$ when either reduced subspace is zero), the residual two-channel factor is
\begin{equation}
\chix=\sqrt{\frac{1+c_F}{1-c_F}}
=\cot\!\left(\frac{\theta_F}{2}\right).
\label{eq:intro-cross-factor}
\end{equation}
Thus $\chix$ is exactly the conditioning left after within-channel conditioning has been removed for the prescribed channel decomposition. Principal- and Friedrichs-angle tools are classical \citep{bjorckgolub1973angles,deutsch1995angle}; our use identifies the residual factor created by the canonical \CCMAC{} split and its resource consequences. Separate preprocessing cannot rotate the two row spaces, whereas mixing equations across channel classes changes the charged oracle. The fixed-$J$ extension is in Appendix~\ref{app:weightedprojectors}.

The resulting picture is summarized by the following contributions.

\paragraph{Contributions.}
\begin{itemize}[leftmargin=1.2em,itemsep=0.7pt,topsep=1pt,parsep=0pt]
\item \textbf{A genuinely cross-coupled decentralized model.} We introduce \CCMAC{}, in which the shared block enters both the global coupled equality and node-wise affine constraints.  The canonical reduction isolates two affine channels whose interaction is absent from the predecessor mixed-affine model: setting $B_i=0$ makes the channel row spaces orthogonal and recovers $\chix=1$.
\item \textbf{A CC-MAC specialization of classical angle geometry.}  After separate whitening, the nontrivial spectrum of the prescribed joint normal operator is determined by the Friedrichs angle and the residual condition number is exactly $\chix^2=(1+c_F)/(1-c_F)$.  Classical angle and perturbation inequalities \citep{knyazevargentati2002principal} explain the barrier; our contribution is to identify the canonical \CCMAC{} pair, attain the barrier by channelwise preprocessing, and propagate it through the resource ledger. Exact common directions are deduplicated before the angle is measured.
\item \textbf{Resource-separated smooth strongly convex guarantees.}  For a fixed execution representation with a supplied initial primal--dual energy bound, Cross-APAPC uses $O(\sqrt{\kappa_f}\log_+(R_E/\eps))$ gradients, $O(\chix\sqrt{\kappa_f}\log_+(R_E/\eps))$ normalized-affine calls, and the corresponding supplied-realization communication budget.  The matching smooth-SC statement concerns the gradient and normalized-affine resource orders; the physical-communication lower construction is radius-explicit. Appendix~\ref{app:lower-bound-program} states the oracle and radius scope explicitly.
\item \textbf{A log-free smooth-convex extension.}  A retained-dual continuation with polynomial feasibility repair closes the multiplicative target-accuracy logarithmic gap of the regularization-based mixed-affine analysis.  Under a fixed scale-invariant multiplier budget we obtain the matching class-level orders $\Theta(T_{\rm cvx})$, $\Theta(\chix T_{\rm cvx})$, and $\Theta(\chix\sqrt{\kappa_W}T_{\rm cvx})$.  We also give nonsmooth resource-separated upper bounds and a fixed-$J$ normalized-oracle extension.
\item \textbf{Algorithms and full-data validation.}  The main text gives Cross-APAPC and the appendix supplies its normalization/filtering primitives; the nonsmooth extension states the additional projection-oracle access required by its recursive-sliding subroutine.  On H100 full-data LIBSVM experiments, all $182$ controlled geometry/network runs and all $48$ robustness runs reach the target accuracy; the measured scaling exponents are $0.988$ for normalized affine work in $\chix$ and $0.980$ for communication in $\chix\sqrt{\kappa_W}$.  Five-seed full-data comparisons report both normalized-oracle and raw primitive ledgers.
\end{itemize}

\paragraph{Relation to prior and concurrent work.}
The coupled-constraint method of \citet{yarmoshik2025coupled} supplies the direct algorithmic and lower-bound lineage for a single global affine mechanism, while \citet{yarmoshik2026mixed} separates local and shared affine products when the shared block does not enter the coupled equality. \CCMAC{} keeps those native costs visible but introduces the nonorthogonal canonical pair created by $B_i\ne0$. The comparison is therefore between different oracle decompositions, not equal-cost matrix units. Concurrent work of \citet{jha2026twogaps} studies one common variable constrained to the intersection of agent-private sets, with projection/constrained-solve access. For affine sets its gap is $1-c_F^2$ between product feasibility and consensus. Here $c_F$ is instead the angle between two normalized \CCMAC{} channel row spaces and the oracle separately charges matrix--vector affine products, gradients, and neighbor rounds.

\paragraph{Closest asymptotic comparison.}
To separate the new cross-channel effect from previously known objective, affine, and network conditioning, define
$L_\eps:=\log(1/\eps)$, $T_{\rm sc}:=\sqrt{\kappa_f}L_\eps$,
$T_{\rm cvx}:=\sqrt{L_fR^2/\eps}$,
$\chi_K:=\sqrt{\kappa_+(K^\top K)}=\sqrt{\kappa(K)}$,
$\Lambda_{\rm cpl}:=\sqrt{\widehat\kappa_A}$,
$\Lambda_{\rm mix}:=\sqrt{\widetilde\kappa_{AC}}(1+\sqrt{\kappa_C})+\sqrt{\widehat\kappa_{\widetilde C^\top}}$, and
$\Gamma_{\rm mix}:=(\sqrt{\widetilde\kappa_{AC}}+\sqrt{\widehat\kappa_{\widetilde C^\top}})\sqrt{\kappa_W}$.
Table~\ref{tab:closest-comparison} reports the published leading upper orders in the resource units used by each paper; for \citet{yarmoshik2026mixed}, the affine column sums their separately reported $A,C,\widetilde C$ products. Our affine entry counts normalized joint-stack calls, while our physical-round realization depends additionally on the supplied channel-normalization implementation. The raw $A/B/C/D$ products and physical-round conversion are expanded in Appendix~\ref{app:resources}. Consequently the affine and communication columns are model/realization dependent and should not be read as equal-unit wall-clock comparisons.
\begin{table}[t]
\centering
\caption{Closest first-order comparison in the cited works' reported resource models. Affine and communication units are model/realization dependent: predecessor rows use their native accounting, whereas our affine entries count normalized stack products and Appendix~\ref{app:resources} expands our solver into raw matrix products and physical rounds. A dash means that the cited work does not give the corresponding decentralized result in that regime.}
\label{tab:closest-comparison}
\vspace{1mm}
\scriptsize
\setlength{\tabcolsep}{2.0pt}
\begin{tabular}{@{}llccc@{}}
\toprule
Work & regime/model & $N_\nabla$ & affine work (native model) & $N_{\rm comm}$\\
\midrule
\citet{salim2022optimal} & smooth SC, $Kx=b$ & $O(T_{\rm sc})$ & $O(\chi_KT_{\rm sc})$ & --\\
\citet{yarmoshik2025coupled} & smooth SC, coupled & $O(T_{\rm sc})$ & $O(\Lambda_{\rm cpl}T_{\rm sc})$ & $O(\Lambda_{\rm cpl}\sqrt{\kappa_W}T_{\rm sc})$\\
\citet{yarmoshik2026mixed} & smooth SC, mixed & $O(T_{\rm sc})$ & $O(\Lambda_{\rm mix}T_{\rm sc})$ & $O(\Gamma_{\rm mix}T_{\rm sc})$\\
\textbf{This work} & \textbf{smooth SC, CC-MAC} & $\mathbf{O(T_{\rm sc})}$ & $\mathbf{O(\chix T_{\rm sc})}$ & $\mathbf{O(\chix\sqrt{\kappa_W}T_{\rm sc})}$\\
\citet{yarmoshik2026mixed} & smooth convex, mixed & $O(T_{\rm cvx}L_\eps)$ & $O(\Lambda_{\rm mix}T_{\rm cvx}L_\eps)$ & $O(\Gamma_{\rm mix}T_{\rm cvx}L_\eps)$\\
\textbf{This work} & \textbf{smooth convex, CC-MAC} & $\mathbf{\Theta(T_{\rm cvx})}$ & $\mathbf{\Theta(\chix T_{\rm cvx})}$ & $\mathbf{\Theta(\chix\sqrt{\kappa_W}T_{\rm cvx})}$\\
\bottomrule
\end{tabular}
\vspace{-1mm}
\end{table}

\noindent\textbf{The new dependence is geometric rather than another within-channel condition number:} after separate normalization, the residual affine factor is exactly $\chix$.  \textbf{In the smooth-convex regime all three leading normalized-resource orders are log-free;} the regularization-based mixed-affine bound of \citet{yarmoshik2026mixed} retains a multiplicative $L_\eps$ factor.  The table is therefore a leading-order oracle comparison rather than an equal-unit raw-cost ranking; Appendix~\ref{app:resources} gives the explicit conversion for our solver.

\section{Problem Class and Canonical Reformulation}
\label{sec:problem}
We follow the notation and oracle model of \citet{yarmoshik2026mixed}. Agent $i$ has $x_i\in\R^{d_i}$, while $z\in\R^q$ is shared, and
\[
\begin{gathered}
A_i\in\R^{m\times d_i},\qquad B_i\in\R^{m\times q},\qquad b_i\in\R^m,\\
C_i\in\R^{p_i\times d_i},\qquad c_i\in\R^{p_i},\qquad
D_i\in\R^{r_i\times q},\qquad e_i\in\R^{r_i},
\end{gathered}
\]
where $f_i:\R^{d_i}\times\R^q\to\R$ is local. Agent $i$ stores its displayed data, evaluates $\nabla f_i$, and applies its matrices and adjoints.

\paragraph{Oracle and accuracy conventions.}
We use the resource-separated linear-span model of \citet{salim2022optimal,yarmoshik2025coupled}. A normalized affine call applies the separately normalized joint stack or its adjoint; raw $A/B/C/D$ products, graph polynomials, setup, and physical neighbor rounds are charged in Appendix~\ref{app:resources}. One multiplication by symmetric $W\succeq0$, $\Ker W=\operatorname{span}\{\one\}$, is one neighbor round, with $\kappa_W:=\lambda_{\max}(W)/\lambda_{\min}^+(W)$ (and $\kappa_W=1$, zero rounds, for $n=1$).

For nonzero $M$ and positive-semidefinite $G$, write
\[
\kappa(M):=\frac{\sigma_{\max}(M)^2}{\sigma_{\min}^+(M)^2},
\qquad
\kappa_+(G):=\frac{\lambda_{\max}(G)}{\lambda_{\min}^+(G)},
\]
so $\kappa(M)=\kappa_+(M^\top M)$. Empty channels are omitted. Unless stated otherwise, each $f_i$ is $L_f$-smooth and $\mu_f$-strongly convex, with $\kappa_f=L_f/\mu_f$.

In the smooth strongly convex regime, $\eps$-accuracy means $\|w_{\rm out}-w^\star\|\le\eps$ in the execution space, and $N_r$ counts resource-$r$ operations to the first such output. Initialization, certificates, output map, and an upper bound $R_E$ on the APAPC energy of Appendix~\ref{app:algorithmproof} belong to the instance. The empirical error in Section~\ref{sec:experiments} is a separate statistic.

\subsection{Canonical channels and a decentralized implementation lift}
Each agent keeps a local copy $z_i\in\R^q$ of the shared variable. With $d:=\sum_i d_i$, write
\[
\bm x=\col(x_1,\ldots,x_n)\in\R^d,\qquad
\bm z=\col(z_1,\ldots,z_n)\in\R^{nq},\qquad u:=\col(\bm x,\bm z).
\]
Define
\begin{align}
A&:=\diag(A_1,\ldots,A_n), & B&:=\diag(B_1,\ldots,B_n),\nonumber\\
C&:=\diag(C_1,\ldots,C_n), & D&:=\diag(D_1,\ldots,D_n),\label{eq:stacked-data}\\
\bm b&:=\col(b_1,\ldots,b_n), & \bm c&:=\col(c_1,\ldots,c_n),\qquad \bm e:=\col(e_1,\ldots,e_n).\nonumber
\end{align}
The lifted objective is $F(u):=\sum_i f_i(x_i,z_i)$.  We use
\[
\Wp_m:=W\otimes I_m,\qquad \Wp_q:=W\otimes I_q,
\qquad E_m:=n^{-1/2}(\one^\top\otimes I_m).
\]
Thus $E_m r=0$ means that the blocks of $r$ sum to zero, while $\Wp_q\bm z=0$ is exactly $z_1=\cdots=z_n$.

Crucially, we define the geometry \emph{before} introducing any auxiliary localization variable.  The canonical reduced channels are
\begin{equation}
K_1:=\begin{bmatrix}E_mA&E_mB\\ C&0\end{bmatrix},\qquad
K_2:=\begin{bmatrix}0&D\\0&\Wp_q\end{bmatrix},\qquad
K:=\begin{bmatrix}K_1\\K_2\end{bmatrix},
\label{eq:channels}
\end{equation}
with right-hand sides $v_1:=\col(E_m\bm b,\bm c)$ and $v_2:=\col(\bm e,0)$.  This system acts only on the original variables and their canonical local copies.  The first channel contains the coupled equality and local constraints on $x_i$; the second contains node-specific constraints on the shared variable and consensus.

Each $K_j$ is a block family. Appendix~\ref{app:weightedprojectors} gives the fixed-$J$ projector-sum reduction; Appendix~\ref{app:equivalence} gives an exact graph-local lift whose row-space distortion is controlled by Proposition~\ref{prop:decentralized-bridge}.
\begin{theorem}[Canonical reduction and decentralized realization]
\label{thm:equivalence}
A tuple $(x_1,\ldots,x_n,z)$ is feasible for~\eqref{eq:ccmac} iff its local-copy vector $u$ satisfies $K_1u=v_1$ and $K_2u=v_2$. Appendix~\ref{app:equivalence} gives an equivalent graph-local lift and proves that the canonical $c_F$ is independent of its auxiliary scalings.
\end{theorem}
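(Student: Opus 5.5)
The first claim is a direct verification with the local-copy map $z\mapsto \bm z=\one\otimes z$, and I would do it in both directions. Given a feasible tuple $(x_1,\ldots,x_n,z)$, set $z_i=z$ for all $i$.

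The second block row of $K_2$ then gives $\Wp_q\bm z=(W\one\otimes z)=0$, because $\Ker W=\operatorname{span}\{\one\}$. The first block row gives $D\bm z=\col(D_iz)=\bm e$. The row $Cu$ equals $\col(C_ix_i)=\bm c$. Finally, $E_m(A\bm x+B\bm z-\bm b)=n^{-1/2}\sum_i(A_ix_i+B_iz-b_i)=0$, so $K_1u=v_1$.

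Conversely, suppose $u$ satisfies both channel equations. Then $\Wp_q\bm z=0$ forces $\bm z\in\Ker(W\otimes I_q)=\operatorname{span}\{\one\}\otimes\R^q$, so $z_1=\cdots=z_n=:z$. With this common $z$, the remaining rows read back exactly as the constraints of \eqref{eq:ccmac}. This also shows the correspondence is a bijection between the feasible set and $\{u: K_1u=v_1,\ K_2u=v_2\}$, since the map $z\mapsto\one\otimes z$ is injective.

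For the graph-local lift I would introduce auxiliary variables $y$ that localize the aggregate equality, in the style of \citet{yarmoshik2025coupled}: replace $E_m r=0$ by $r+\alpha\Wp_m y=0$, which is solvable iff $r\in\Range(\Wp_m)=\Ker(E_m)$. The second channel would likewise be scaled as $\beta\Wp_q$. Feasibility equivalence follows again from $\Range(\Wp_m)=(\operatorname{span}\{\one\}\otimes\R^m)^\perp$, by eliminating $y$.

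The remaining claim is scale invariance of the canonical $c_F$. The geometric point is that $c_F$ depends only on the subspaces $U=\Range(K_1^\top)$ and $V=\Range(K_2^\top)$, both taken in the original $u$-space. Left multiplication of a channel by an invertible map leaves its row space unchanged; in particular this covers whitening and scalings such as $\beta\Wp_q$, since $\beta\neq0$. The localized first channel needs separate treatment because it lives in the extended space $(u,y)$. There I would define the canonical object as the projection of its row space onto the $u$-coordinates after eliminating $y$ (a Schur-complement or quotient argument), and show that this equals $\Range(K_1^\top)$ for every $\alpha\neq0$. The key fact is that eliminating $y$ from the constraint set $\{r\in\Range\Wp_m\}$ yields the constraint $E_mr=0$ regardless of $\alpha$.

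The main obstacle is stating precisely which subspace pair the lift's $c_F$ is computed from, so that the invariance is a genuine statement rather than a tautology. I would take the definition to be the constraint-set description, namely the orthogonal complements of the feasible directions projected to $u$. The claim then reduces to invariance of the affine feasible sets, which was shown above. Any row-space distortion introduced by the lift itself I would defer to Proposition~\ref{prop:decentralized-bridge}.
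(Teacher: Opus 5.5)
Your proof of the feasibility equivalence (direct verification through $\bm z=\one\otimes z$ and $\Ker(W\otimes I_q)=\operatorname{span}\{\one\}\otimes\R^q$) is the paper's argument, written out in more detail. So is your treatment of the lift: you eliminate $y$ using $\Range(\Wp_m)=\Ker E_m$, exactly as the paper does.

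You depart from the paper on the invariance claim. The paper treats it as immediate. The matrices $K_1,K_2$ in \eqref{eq:channels} act only on the original variables and contain neither $\alpha$ nor $\gamma$, so their row spaces cannot depend on them. The only nontrivial remark, in Proposition~\ref{prop:lift-invariance}, is that replacing $W$ by any symmetric $W'$ with the same kernel leaves $\Range(W'\otimes I_q)=\Range(\Wp_q)$ unchanged.

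You instead make the statement non-tautological. You ask which $u$-space subspace a given lift determines, and you show that eliminating $y$ returns $\Range(K_1^\top)$ for every $\alpha$. This is the stronger and more informative formulation, because it explains why the canonical pair is the lift-independent object. It is exactly the identity $U_\alpha\cap\mathcal H_0=U_0$ that the paper establishes only later, inside the proof of Proposition~\ref{prop:decentralized-bridge}. Your constraint-set formulation also covers the change of gossip matrix automatically, since only $\Ker(W'\otimes I_m)$ enters.

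One point in your sketch needs fixing. The correct object is the set of lifted row vectors whose $y$-component vanishes. Equivalently, it is the orthogonal complement, in $u$-space, of the $u$-projection of the kernel of the lifted channel. It is \emph{not} the orthogonal projection of the lifted row space onto the $u$-coordinates, which your first phrasing suggests. That projection is $\{(A^\top a+C^\top c,\,B^\top a)\}$ over all coefficients $a$, i.e.\ $\Range\bigl(\begin{bmatrix}A&B\\ C&0\end{bmatrix}^{\top}\bigr)$. It is generally strictly larger than $\Range(K_1^\top)$, because it admits disagreement coefficients $a$.

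The definition you finally adopt, the complement of the projected feasible directions, is the right one. Apply it channelwise, to the homogeneous channel-1 system alone, so that $\pi_u\Ker L_1=\Ker K_1$. Do not apply it to the joint feasible set.
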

Each graph-filtered local action costs $O(\sqrt{\bar\kappa_W})$ neighbor rounds before the separate-normalization costs of Appendices~\ref{app:ccmac-normalization} and~\ref{app:resources}. If every $B_i=0$, the canonical row spaces are orthogonal and $\chix=1$ (Corollary~\ref{cor:old-orthogonal}); only the cross factor and $N_B$ vanish, not the $A/C/D$ or communication costs.

\section{The Cross-Angle Geometry}\label{sec:angle}
Separately normalized channels yield a sum of row-space projectors (Proposition~\ref{prop:multi-channel}). For two canonical \CCMAC{} classes its spectrum is described by principal angles; the Friedrichs angle removes benign exact overlap. Appendix~\ref{app:two-agent-example} keeps the feasible problem fixed while making this angle arbitrarily small.

We first isolate the linear-algebraic core. Let $M_1:\R^d\to\R^{p_1}$ and $M_2:\R^d\to\R^{p_2}$ be two nonzero linear maps. Set
\[
U=\Range(M_1^\top),\qquad V=\Range(M_2^\top).
\]
Put $I:=U\cap V$. For a consistent affine pair, Lemma~\ref{lem:affine-dedup} retains exact common equations in one channel without changing feasibility; near-common directions remain. The Friedrichs cosine is
\begin{equation}
\cF(U,V):=\sup\bigl\{|\langle u,v\rangle|:\ u\in U\cap I^\perp,\ v\in V\cap I^\perp,\ \|u\|=\|v\|=1\bigr\}.
\label{eq:friedrichs}
\end{equation}
Set $\cF(U,V)=0$ if either reduced subspace is zero. When $U\cap V=\{0\}$, this is the largest cosine of the principal angles.

For the canonical pair we henceforth suppress the arguments and write
$c_F:=c_F(\Range(K_1^\top),\Range(K_2^\top))$. The single cross-coupling factor used in the paper is
\begin{equation}
\chix:=\sqrt{\frac{1+c_F}{1-c_F}}.
\end{equation}
In the core model $C=D=0$, Proposition~\ref{prop:heterogeneity} reduces $c_F$ to an $m\times m$ generalized eigenproblem in $A_i,B_i$. Theorem~\ref{thm:angle} gives the weighted bounds; after consistent deduplication and exact separate whitening,
\begin{equation}
\kappa\!\left(\begin{bmatrix}\widehat M_1\\ \widehat M_2\end{bmatrix}\right)
=\frac{1+c_F}{1-c_F}=\chix^2.
\label{eq:exactcondition}
\end{equation}
Theorem~\ref{thm:blockwise-opt} shows that this value is the infimum over all invertible within-channel preprocessing, so the penalty is not a whitening artifact.

\section{Accelerated Method from the Cross-Angle Geometry}
\label{sec:algorithm}
We now turn the geometry into an algorithmic statement. The construction reuses the affine-constrained acceleration of \citet{salim2022optimal} and the decentralized polynomial reductions developed in the line of \citet{kovalev2020optimal,yarmoshik2025coupled,yarmoshik2026mixed}. Algorithmically, the number of channel classes enters only through the spectrum of the normalized joint stack: Proposition~\ref{prop:multi-channel} gives condition $O(\chiJ^2)$ for $J$ classes. The two-channel Friedrichs factor is the closed-form specialization relevant to \CCMAC{}.

Exact whitening is unnecessary. Corollary~\ref{cor:inexact-whitening} proves that fixed spectral-equivalence accuracy preserves the two-channel order, while Proposition~\ref{prop:poly-normalization} and Lemma~\ref{lem:structured-normalization} give a constructive decentralized realization from the certificates in Assumption~\ref{ass:spectral-certificates}. The canonical channels $K_1,K_2$ therefore define the \CCMAC{} geometry, whereas the graph-filtered local channels $\mathcal L_1,\mathcal L_2$ implement it; Proposition~\ref{prop:decentralized-bridge} controls the corresponding angle distortion. Raw matrix-product and neighbor-round costs are accounted for separately in Appendix~\ref{app:resources}.

\paragraph{Algorithmic organization.}
Let $M_j=K_j$ in the normalized canonical oracle and $M_j=\mathcal L_j$ in the decentralized implementation. Channelwise maps construct $\widehat M_j=S_jM_j$ with fixed spectral-equivalence accuracy; thus the hat denotes the normalized execution operator, not a claim that canonical $K_j$ is directly local. Algorithm~\ref{alg:cross-apapc} gives the complete outer iteration here; Appendix~\ref{app:algorithmproof} specifies $\operatorname{Cheb}_N$, normalization, certificates, and the proof. The convex continuation and nonsmooth interface are also deferred to their appendices.

\begin{algorithm}[H]
\caption{\textsc{Cross-APAPC}: accelerated affine optimization after separate channel normalization}
\label{alg:cross-apapc}
\begin{algorithmic}[1]
\Require $w^0$, $\widehat K$, $\widehat v$, smooth strongly convex $\Phi$, $L_\Phi$, $\mu_\Phi$, bounds $\lambda_\pm$, horizon $T$
\State $N\gets\lceil\sqrt{\lambda_+/\lambda_-}\rceil$, $\zeta\gets\mu_\Phi$
\State $\tau\gets\min\{1,\frac12\sqrt{19/(15\kappa_\Phi)}\}$, $\eta\gets(4\tau L_\Phi)^{-1}$, $\theta\gets15/(19\eta)$
\State $w_f^0\gets w^0$, $p^0\gets0$
\For{$k=0,1,\ldots,T-1$}
  \State $w_g^k\gets\tau w^k+(1-\tau)w_f^k$
  \State $w^{k+1/2}\gets(1+\eta\zeta)^{-1}[w^k-\eta(\nabla\Phi(w_g^k)-\zeta w_g^k+p^k)]$
  \State $r^k\gets\theta[w^{k+1/2}-\operatorname{Cheb}_N(w^{k+1/2};\widehat K,\widehat v)]$
  \State $p^{k+1}\gets p^k+r^k$
  \State $w^{k+1}\gets w^{k+1/2}-\eta(1+\eta\zeta)^{-1}r^k$
  \State $w_f^{k+1}\gets w_g^k+\frac{2\tau}{2-\tau}(w^{k+1}-w^k)$
\EndFor
\State \Return $w^T$ and its original-variable coordinate projection
\end{algorithmic}
\end{algorithm}

\begin{theorem}[Accelerated two-channel upper bound]
\label{thm:algorithm}
Fix an execution representation and suppose the inputs include a valid upper bound $R_E$ on the initial APAPC primal--dual energy defined in Appendix~\ref{app:algorithmproof}. Under the smooth strongly convex assumptions and fixed-accuracy separate normalization, Cross-APAPC returns an $\eps$-accurate \CCMAC{} primal solution with
\begin{equation}
\begin{aligned}
N_{\nabla f}&=O(T_{\rm sc,E}),&
N_{\rm aff}&=O(\widehat\chi T_{\rm sc,E}),\\
N_{\rm comm}&=O\!\left(\widehat\chi T_{\rm sc,E}\sum_j s_j^{\rm comm}\right),&
T_{\rm sc,E}&:=\sqrt{\kappa_f}\log_+(R_E/\eps).
\end{aligned}
\label{eq:sc-upper-compact}
\end{equation}
where $\log_+(t):=\max\{1,\log t\}$ and $\widehat\chi:=\sqrt{\lambda_+/\lambda_-}=O(\chix)$ for constant-factor-tight certificates. A neighbor-round specialization requires a supplied normalized-channel realization and accounts for all additional communication primitives. This is a fixed-instance energy-controlled guarantee, not a uniform claim over a class that controls only the primal initial radius. The fixed-$J$ normalized-oracle extension and the raw primitive ledger are in Appendices~\ref{app:weightedprojectors} and~\ref{app:resources}.
\end{theorem}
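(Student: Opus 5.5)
The proof reduces Theorem~\ref{thm:algorithm} to the APAPC analysis of \citet{salim2022optimal}, applied to the affinely constrained problem $\min\Phi(w)$ s.t.\ $\widehat K w=\widehat v$. The only change is that the exact projection onto the constraint set is replaced by a Chebyshev-accelerated Gram iteration. The argument has three steps.

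\textbf{Feasible set and spectral bounds.} By Theorem~\ref{thm:equivalence}, feasibility for \eqref{eq:ccmac} is equivalent to $K_1u=v_1$, $K_2u=v_2$. Channelwise left preprocessing $\widehat M_j=S_jM_j$ with invertible $S_j$ does not change the solution set. Hence the constrained minimizer of $\Phi$ (the lifted $F$, or its execution representation) is the \CCMAC{} solution, and $\mu_\Phi,L_\Phi$ inherit $\mu_f,L_f$, so $\kappa_\Phi=O(\kappa_f)$. Next, Corollary~\ref{cor:inexact-whitening} and \eqref{eq:exactcondition} give certificates $\lambda_-\le\lambda_{\min}^+(\widehat K^\top\widehat K)$ and $\lambda_{\max}(\widehat K^\top\widehat K)\le\lambda_+$ with $\lambda_+/\lambda_-=O(\chix^2)$. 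This yields $\widehat\chi=O(\chix)$.

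\textbf{Contraction of the perturbed APAPC iteration.} Write $\operatorname{Cheb}_N(w)=w-P_N(\widehat K^\top\widehat K)\widehat K^\top(\widehat K w-\widehat v)\cdot(\ldots)$. Equivalently, $w-\operatorname{Cheb}_N(w)=\mathcal P_N(w-w_{\rm feas})$, where $\mathcal P_N=p_N(\widehat K^\top\widehat K)$ is a shifted-Chebyshev polynomial with $p_N(0)=0$. On the spectrum $[\lambda_-,\lambda_+]$ this polynomial satisfies $p_N\in[\,1-\delta,1+\delta\,]$ with $\delta$ a fixed constant, because $N=\lceil\sqrt{\lambda_+/\lambda_-}\rceil$. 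Thus $\mathcal P_N$ is a gossip-type operator whose kernel is exactly $\Ker\widehat K$ and whose range condition number is $O(1)$. I would then apply the APAPC Lyapunov argument (as used in \citet{yarmoshik2025coupled}) with this well-conditioned operator in place of the projector. The constants $19/15$ in $\tau,\theta$ come from the bounds $\lambda_{\max}(\mathcal P_N)\le 19/15$-type estimates on $p_N$. The dual sequence $p^k$ stays in $\Range(\widehat K^\top)$ because $p^0=0$. The energy $\Psi^k$ (primal distance, dual distance in the $\mathcal P_N^{\dagger}$ norm, and Bregman term) then contracts as $\Psi^{k+1}\le(1-c/\sqrt{\kappa_\Phi})\Psi^k$.

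\textbf{Counting resources.} Since $\Psi^0\le R_E$ and $\|w^T-w^\star\|^2\lesssim\Psi^T$, taking $T=O(\sqrt{\kappa_f}\log_+(R_E/\eps))$ gives $\eps$-accuracy. Each iteration uses one gradient and one $\operatorname{Cheb}_N$ call. That call costs $N=O(\widehat\chi)$ applications of $\widehat K$ and $\widehat K^\top$, and each normalized application costs $\sum_j s_j^{\rm comm}$ neighbor rounds in the supplied realization. Multiplying these per-iteration costs by $T$ gives \eqref{eq:sc-upper-compact}.

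\textbf{Main obstacle.} The hardest step is the middle one: verifying that the inexact whitening, i.e.\ the realization $\widehat M_j$ only spectrally equivalent to the exact one, still leaves $\Ker\widehat K$ equal to the feasible direction space. One must also check that the Chebyshev shift sends $[\lambda_-,\lambda_+]$ into the interval required by the APAPC step sizes. I would first check this on the spectral-equivalence statement of Corollary~\ref{cor:inexact-whitening}. Kernel preservation follows from the invertibility of $S_j$. The interval placement is a standard Chebyshev extremal bound with $N$ chosen as above, possibly enlarged by a constant factor.
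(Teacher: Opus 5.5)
\textbf{Verdict.} For the canonical execution representation your three steps are the paper's proof. The gap is the decentralized representation, which the theorem also covers.

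\textbf{Canonical case.} You treat Algorithm~\ref{alg:cross-apapc} as the APAPC method of \citet{salim2022optimal} for $\Phi$ and $\widehat Kw=\widehat v$, with Chebyshev degree $N=\lceil\sqrt{\lambda_+/\lambda_-}\rceil$. You then multiply $O(\sqrt{\kappa_\Phi}\log_+(R_E/\eps))$ outer iterations by one gradient, $O(N)$ normalized stack products, and the per-channel communication costs. With $M_j=K_j$ and $\Phi=F$ this is complete; the paper simply cites Salim et al.'s contraction rather than re-deriving the Lyapunov argument. Two small corrections:
\begin{itemize}
\item The energy certificate bounds $\Psi^0$ by $R_E^2$, not by $R_E$.
\item You need $S_j$ injective on $\Range(M_j)$ together with consistency $v_j\in\Range(M_j)$, not invertible. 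The polynomial normalizer $p_j(Q_j)$ may vanish on $\Ker M_j^\top$.
\end{itemize}

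\textbf{The gap: strong convexity in the lifted representation.} There the execution variable is $(\bm x,\bm z,y)$ with an auxiliary $y\in\mathcal Y_\perp$. Your claim that $\mu_\Phi,L_\Phi$ are inherited from $\mu_f,L_f$ is false. $F$ does not depend on $y$, so it is not strongly convex along $\mathcal Y_\perp$. The APAPC contraction you invoke assumes $\Phi$ strongly convex on the whole execution space, because the iterates are infeasible, so the factor $1-c/\sqrt{\kappa_\Phi}$ degenerates.

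The missing ingredient is the penalized objective of Proposition~\ref{prop:decentralized-bridge}:
\[
\Phi_{\rm loc}=F+\tfrac r2\|A\bm x+B\bm z+\alpha\widetilde W_m y-\bm b\|^2,\qquad r=\mu_f/(2\bar h),\quad \alpha\underline\sigma_W=\sqrt{3\bar h}.
\]
\begin{itemize}
\item It vanishes on the lifted feasible set, so the constrained minimizer keeps the \CCMAC{} original coordinates.
\item It gives $\mu_{\Phi_{\rm loc}}\ge\mu_f/2$ and $L_{\Phi_{\rm loc}}\le L_f+O(\mu_f)$, hence $\kappa_\Phi=O(\kappa_f)$.
\item Its gradient adds $O(1)$ actions of $H,H^\top,\widetilde W$ per iteration, which you must absorb into the channel communication costs.
\end{itemize}

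\textbf{The same proposition is needed in your spectral step.} Corollary~\ref{cor:inexact-whitening} bounds $\kappa(\widehat K)$ by the Friedrichs factor of the row spaces actually being normalized. For $\mathcal L_1,\mathcal L_2$ these are $U_{\rm loc},V_{\rm loc}$ in the lifted space, not $\Range(K_1^\top),\Range(K_2^\top)$. Only the bridge bound $\chi_{\rm loc}\le\sqrt3\,\chix$ turns this into $\widehat\chi=O(\chix)$.

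By contrast, the two items in your ``main obstacle'' paragraph are routine. Kernel preservation is the injectivity argument above. The spectral placement behind the $19/15$ constants is already supplied by Salim et al.'s choice of $N$.
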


\paragraph{Primitive realization.}
The normalized count in Theorem~\ref{thm:algorithm} is converted to raw work only after fixing an implementation. For the certified construction of Appendix~\ref{app:resources}, put $T_{\Xi,E}:=\sqrt{\kappa_f}\,\Xi\log_+(R_E/\eps)$. Then
\[
N_A,N_B=O(T_{\Xi,E}\sqrt{\bar k_1}),\quad
N_C=O(T_{\Xi,E}\sqrt{\bar k_1\bar\kappa_C}),\quad
N_D=O(T_{\Xi,E}\sqrt{\bar k_2}),
\]
and
\[
N_{\rm aff}^{\rm prim}=O(T_{\Xi,E}\Lambda_{\rm in}),\qquad
N_{\rm comm}=O\!\left(T_{\Xi,E}(\sqrt{\bar k_1}+\sqrt{\bar k_2})\sqrt{\bar\kappa_W}\right).
\]
For constant-factor certificates $\Xi=O(\chix)$. If $B=0$, then $N_B=0$, but the $A,C,D$ and communication costs remain. This translation is the basis for comparing our normalized theorem-level unit with the native matrix products counted by \citet{yarmoshik2025coupled,yarmoshik2026mixed}.

\subsection{Minimax optimality}
\label{sec:minimax}
The formal parameter-bounded normalized-channel class and the infimum--supremum resource definition are given in Appendix~\ref{app:lower-bound-program}. The hard families separately isolate gradient, affine, and physical-communication resources while keeping the other legal resources free.

\begin{theorem}[Scope of the smooth strongly convex lower bounds]
\label{thm:minimax-sc}
For the normalized-channel linear-span model with budgets $\kappa_f\le\bar\kappa$, $\chix\le\Xi$, $\kappa_W\le\Omega$, and fixed-accuracy normalization, the constructions in Appendix~\ref{app:lower-bound-program} separately give the standard objective lower order and cross-angle affine lower order,
\begin{equation}
\mathfrak C_{\nabla}=\Omega\!\left(\sqrt{\bar\kappa}\log\frac1\eps\right),\qquad
\mathfrak C_{\rm aff}=\Omega\!\left(\sqrt{\bar\kappa}\,\Xi\log\frac1\eps\right),
\label{eq:minimax-compact}
\end{equation}
in the stated nontrivial parameter ranges. The current physical-path construction gives the radius-explicit communication lower bound stated there; it is not promoted here to a uniform $\Omega(\Xi\sqrt\Omega\log(1/\eps))$ equality at fixed absolute accuracy. Accordingly, Theorem~\ref{thm:algorithm} and the lower statements use their own explicit radius and oracle hypotheses.
\end{theorem}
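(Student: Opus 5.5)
The plan is to prove the two lower orders in \eqref{eq:minimax-compact} by two separate hard families, each isolating one resource while the other resources are free. In both families the smooth strongly convex class is fixed with the budgets $\kappa_f\le\bar\kappa$, $\chix\le\Xi$, $\kappa_W\le\Omega$. Throughout we work in the linear-span model of \citet{salim2022optimal,yarmoshik2025coupled}: after $k$ calls to a given oracle, the iterate lies in a Krylov-type subspace generated by that oracle, and the argument counts how many coordinates can have been ``unlocked''.

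\textbf{Gradient lower order.} Take a single agent ($n=1$, so $\kappa_W=1$ and no rounds are charged). Take trivial affine channels ($B=C=D=0$ and a single consensus-free row), so that $\chix=1\le\Xi$ and normalized affine calls reveal nothing new. Embed Nesterov's worst-case quadratic with condition number $\bar\kappa$ on $x_1$. The standard chain argument gives $\Omega(\sqrt{\bar\kappa}\log(1/\eps))$ gradient calls. The only check needed is that free affine calls cannot enlarge the span beyond the gradient Krylov space. This holds because the normalized stack acts on coordinates orthogonal to the chain, or as a scaled identity on them.

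\textbf{Affine lower order.} This step is the main obstacle. I would reduce to the affine-constrained lower bound of \citet{salim2022optimal}, which gives $\Omega(\sqrt{\kappa_f}\,\chi_K\log(1/\eps))$ calls to $K,K^\top$ for a problem $\min F$ s.t.\ $Ku=v$ with $\kappa(K)=\chi_K^2$. The task is to realize their hard operator $K$, whose normal operator is a path Laplacian with prescribed condition number, as a separately normalized two-channel stack $\widehat K=[\widehat K_1;\widehat K_2]$ with $\widehat K^\top\widehat K=P_U+P_V$.

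By \eqref{eq:exactcondition}, the spectrum of $P_U+P_V$ on $U+V$ is $\{1\pm\cos\theta_i\}$ together with $2$ on $U\cap V$. I would therefore build $U,V$ as direct sums of planes. In each plane, one line from $U$ and one from $V$ meet at angle $\theta$, with $\cot(\theta/2)=\Xi$. This makes $\kappa_+=(1+c_F)/(1-c_F)=\Xi^2$ with only two eigenvalues.

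A two-eigenvalue operator alone gives no Krylov lower bound, so I would instead use a chain of alternating projections. The lines are arranged so that $U$ couples coordinates $(2j,2j+1)$ and $V$ couples $(2j+1,2j+2)$, with $c_F=\cos\theta$. Then $P_U+P_V$ is a tridiagonal ``zig-zag'' path operator whose spectrum lies in $[1-c_F,1+c_F]$. Each affine call advances the span by one coordinate, and the condition number is $\chix^2$.

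Pairing this with an objective of condition $\bar\kappa$ on the same chain, the Salim et al.\ argument then gives the required bound: the primal--dual solution has geometrically decaying coordinates, at rate $1-O(1/(\sqrt{\bar\kappa}\,\Xi))$. Finally, I would realize the two row spaces inside \CCMAC{} via Theorem~\ref{thm:equivalence}, taking $U$ from $E_mA,E_mB$ and $V$ from $D$ with $n$ and $W$ fixed, and verify via Theorem~\ref{thm:blockwise-opt} that no within-channel preprocessing lowers the factor below $\Xi$.

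\textbf{Scope of the communication bound.} For communication, I would only state the radius-explicit path-graph bound. It is obtained by placing the two ends of the chain at distant nodes of a path graph with $\kappa_W\le\Omega$. I would not claim the uniform product form, consistent with the theorem's restriction.
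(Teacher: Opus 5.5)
Your gradient lower bound and your scope remark on communication are fine. The paper gets the gradient order from the unconstrained subclass (or the same Scaman construction), and for communication it only cites the radius-explicit path bound.

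The gap is in the affine lower bound. You correctly note that the hard $K$ of \citet{salim2022optimal} has a path-Laplacian normal operator. But you then place the zig-zag chain and the objective on the \emph{same} coordinate index, and that cannot produce the product $\sqrt{\bar\kappa}\,\Xi\log(1/\eps)$.

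First, the Friedrichs cosine of such a chain is not the angle between individual lines. For the uniform chain of length $N$ (odd and even rows of the path incidence matrix), neighbouring lines meet at $60^\circ$, yet $c_F=\cos(\pi/N)$ and $\chix=\cot(\pi/(2N))$. So the cross factor is fixed by the chain length, and an infinite uniform chain has $c_F=1$.

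Second, take one $U$-line in each plane $(2j,2j+1)$ and one $V$-line in each plane $(2j+1,2j+2)$. Then $U+V$ generically has codimension one in the chain (the constant vector in the uniform case). The feasible set is essentially a line, and the support argument yields at most $\Omega(N)=\Omega(\Xi)$, because the whole chain is exposed after $O(N)$ affine calls.

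Third, if you instead let the objective couple the chain coordinates, the free gradient calls advance the support without any affine call. So your asserted decay rate $1-O(1/(\sqrt{\bar\kappa}\,\Xi))$ has no construction behind it.

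The missing idea is the two-level tensor structure of the Scaman-type reduction. The paper takes $K_1=M_{\rm odd}\otimes\mathrm{Id}$ and $K_2=M_{\rm even}\otimes\mathrm{Id}$: the two individually whitened matchings of a virtual path of $N$ blocks, tensored with the identity on a separate hard-coordinate space, embedded as $B_1,D_1$ of a one-agent instance. Then:
\begin{itemize}
\item $K^\top K=W_N\otimes\mathrm{Id}$ with $W_N=L_{P_N}/2$, and the kernel is the consensus subspace over the blocks;
\item the objective is the linear-graph hard function of \citet{scaman2017optimal}, whose two ends carry alternating halves of Nesterov's chain.
\end{itemize}
Because $K^\top\operatorname{Span}\{Kx,K\nabla F\}=\operatorname{Span}\{W_Nx,W_N\nabla F\}$, each new hard coordinate must traverse the whole virtual path. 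This gives $\Omega(\sqrt{\kappa_f}\,\gamma_N^{-1/2}\log(1/\eps))=\Omega(\sqrt{\kappa_f}\,\chix\log(1/\eps))$ composite affine calls.

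Your appeal to Theorem~\ref{thm:blockwise-opt} is unnecessary, since the lower bound is stated for the given normalized channels.
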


\section{Convex and Nonsmooth Extensions}
\label{sec:beyond-sc}
The same normalized-channel geometry governs the other convex regimes. Appendix~\ref{app:smooth-convex-proof} defines the scale-invariant smooth-convex objective/feasibility criterion and multiplier budget $\Lambda_{\rm cvx}\le\bar\Lambda$; Appendix~\ref{app:nonsmooth-proof} states the corresponding full-domain nonsmooth assumptions and criterion. Write
\[
T_{\rm cvx}:=\sqrt{L_fR^2/\eps},\qquad
T_{\rm ns}:=M_{3R}R/\eps,\qquad
T_{\rm nsc}:=M_{3R}/\sqrt{\mu_f\eps}.
\]

\begin{theorem}[Log-free smooth-convex minimax complexity]
\label{thm:smooth-convex-logfree}
For every fixed multiplier budget $\bar\Lambda\ge1$ and $T_{\rm cvx}\ge50$,
\begin{equation}
\mathfrak C^{\rm cvx}_{\nabla}=\Theta(T_{\rm cvx}),\qquad
\mathfrak C^{\rm cvx}_{\rm aff}=\Theta(\Xi T_{\rm cvx}),\qquad
\mathfrak C^{\rm cvx}_{\rm comm}=\Theta(\Xi\sqrt\Omega\,T_{\rm cvx}).
\label{eq:cvx-logfree-compact}
\end{equation}
One explicit continuation method attains all three upper orders simultaneously. Its final feasibility repair costs only an additive $O_{\bar\Lambda}(\widehat\chi\log T_{\rm cvx})$ affine term, dominated by $O_{\bar\Lambda}(\widehat\chi T_{\rm cvx})$; hence there is no multiplicative target-accuracy logarithm.
\end{theorem}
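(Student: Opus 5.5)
The proof has two halves: matching upper bounds from one continuation algorithm, and lower bounds from hard instances that embed the classical smooth-convex chain into the normalized \CCMAC{} class. \emph{Upper bound.} I would use a retained-dual continuation wrapped around Cross-APAPC (Theorem~\ref{thm:algorithm}).

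The continuation proceeds in stages $s=0,1,\ldots,S$. Stage $s$ solves the regularized problem $F(u)+\tfrac{\mu_s}{2}\|u-u^0\|^2$ with $\mu_s=\mu_0 2^{-s}$, warm-started from the previous primal iterate \emph{and} the previous dual variable $p$. Retaining the dual is essential. By the multiplier budget $\Lambda_{\rm cvx}\le\bar\Lambda$, the APAPC energy at the start of each stage is bounded by $O_{\bar\Lambda}(R^2)$ times a constant independent of $\eps$. Each stage therefore needs only a constant-factor energy reduction, i.e.\ $O(\sqrt{L_f/\mu_s})$ gradient steps rather than $O(\sqrt{L_f/\mu_s}\log(1/\eps))$.

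Summing over stages gives a geometric series in $\sqrt{L_f/\mu_s}$. It is dominated by the last stage, where $\mu_S\asymp\eps/R^2$, so the total is $O(T_{\rm cvx})$ gradients. Multiplying by the per-iteration normalized-affine cost $\widehat\chi$ (the Chebyshev degree $N$ in Algorithm~\ref{alg:cross-apapc}) gives $O(\widehat\chi T_{\rm cvx})$ affine calls. Multiplying further by the graph-filter cost $\sqrt{\kappa_W}$ gives the communication count.

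The final iterate is only approximately feasible. I would repair it by projecting onto $\{\widehat K w=\widehat v\}$ via the Chebyshev solver. Because the residual is only polynomially small in $1/T_{\rm cvx}$, the repair needs $O(\widehat\chi\log T_{\rm cvx})$ affine calls, an additive term. The objective change it causes is controlled by $L_f$-smoothness together with the residual bound.

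\emph{Main obstacle.} The hardest step is showing that each stage's initial energy stays bounded uniformly in $s$ under the scale-invariant multiplier budget. The difficulty is that the regularized dual optimum $p^\star_{\mu_s}$ drifts as $\mu_s$ changes. My first attempt would bound $\|p^\star_{\mu_s}-p^\star_{\mu_{s-1}}\|$ through the dual strong concavity on $\Range(\widehat K)$, whose modulus is $\lambda_-/L_\Phi$. Combined with $\|u^\star_{\mu}-u^0\|\le R$, this should show the drift contributes $O(\bar\Lambda^2R^2)$ to the energy. If a direct argument fails, I would fall back on a Lyapunov argument that couples consecutive stages.

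\emph{Lower bounds.} For $\Theta(T_{\rm cvx})$ gradients, I would take $B=C=D=0$ and the trivial channels, reducing to Nesterov's chain function. For the affine bound $\Omega(\Xi T_{\rm cvx})$, I would adapt the construction behind Theorem~\ref{thm:minimax-sc}. I would choose two channels whose row spaces meet at Friedrichs cosine $c_F$ with $\chix=\Xi$, so that $P_U+P_V$ acts as a path-graph Laplacian with condition number $\Xi^2$. The objective is a chain placed along that path, so each affine call advances the linear span by one coordinate. Each gradient then requires $\Xi$ affine propagations, giving $\Xi T_{\rm cvx}$. For communication, I would place the channel chain on a path graph with $\kappa_W\le\Omega$, which multiplies each propagation by $\sqrt\Omega$. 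Since accuracy in this regime is measured in the scale-invariant criterion, the radius issue raised in Theorem~\ref{thm:minimax-sc} does not arise. The budget $\bar\Lambda$ stays fixed because the multipliers of the hard instances are $O(1)$ after normalization. The condition $T_{\rm cvx}\ge50$ guarantees the chain is longer than the constant overhead.
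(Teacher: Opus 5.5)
Your stage schedule and geometric summation match the paper. The step you rely on to keep each stage's starting energy at $O_{\bar\Lambda}(R^2)$ does not go through for plain Cross-APAPC on $F+\frac{\mu_s}{2}\|u-u^0\|^2$.

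\textbf{Upper bound.} The dual drift you single out is the benign term. Pairing the regularized KKT systems at $\nu$ and $\nu/2$ against $a_\nu-a_{\nu/2}\in\Ker\bar K$ and using cocoercivity of $\nabla F$ gives $\|p_\nu-p_{\nu/2}\|\le\frac52\sqrt{L\nu}\,R$. Your dual-strong-concavity route only gives a drift of order $LR$, times a condition factor, which does not shrink with $\nu$; through the $1/(L\nu)$ dual weight of the energy it then grows like $L/\nu$. The real obstruction is the primal Bregman term after resetting $x\gets f$. Halving $\nu$ changes $D_{\phi_\nu}(f,a_\nu)$ by terms of order $\nu R^2$ plus the cross term $\langle p_{\nu/2}-p_\nu,\,f-\Pi_{\mathcal A}f\rangle$, which enters the energy with weight $1/\nu$. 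For merely convex $F$, your stage energy only gives $\|f-\Pi_{\mathcal A}f\|\lesssim R$. That cross term is then $O(\sqrt{L/\nu}\,R^2)$, and the constant-factor-per-stage argument reintroduces a multiplicative logarithm. The same missing estimate is what your repair step assumes when it calls the residual ``polynomially small''. With infeasibility of order $R$, projecting onto $\mathcal A$ can move $F$ by order $LR^2$.

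The missing idea is the paper's fixed normal augmentation $\frac L2\langle w-w_{\rm ref},P(w-w_{\rm ref})\rangle$, where $P=\mathrm{Id}-r_{d_{\rm RF}}(Q)$ and $\frac12\Pi_{\mathcal N}\preceq P\preceq\frac32\Pi_{\mathcal N}$. It is executed through the residual filter $\RF$ without ever knowing $w_{\rm ref}$. It forces $\|f-\Pi_{\mathcal A}f\|\le\sqrt{2\nu/L}\,R$, which both makes the warm-start constant universal and makes the final residual polynomially small.

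Two $\widehat\chi$-dependent leaks also remain in your plan:
\begin{itemize}
\item With a zero initial dual, the initial energy involves $\|\bar K^\top\lambda^\star\|$. The budget is stated through $\sigma_{\min}^+(\bar K)$, so it only bounds this by $O(\widehat\chi\bar\Lambda LR)$.
\item A plain Chebyshev projection in the repair pays a factor $\|\bar K\|/\sigma$.
\end{itemize}
The weighted second-kind filter removes both. It is used for $p^0=\UF(g_0)-g_0$ and as the last repair step, via $\|\bar K s_{d_{\rm UF}}(Q)q\|\le\sqrt\ell\|q\|$. This matters because the $\Theta$ claims are uniform in $\Xi$ for every $T_{\rm cvx}\ge50$.

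\textbf{Lower bound.} The radius issue does arise, and scale invariance of the criterion does not remove it. The issue concerns the class budget $\|w^0-w^\star\|\le R$, not absolute versus relative accuracy. In the cross-link family of Theorem~\ref{thm:comm-lower}, the linked local variables $x_i=tR_ju$ add $t^2E_I$, which grows linearly in $t$ at fixed $\kappa_f$, to the squared full radius. Once that radius is normalized to $R$, the hard shared tail becomes a vanishing fraction as $\Xi$ grows, and no accuracy is forced on it.

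The paper therefore builds a new reflected finite chain: the seed is a linear term on $u$, the coupling weight is $c=Lt^2$, and the matchings have no singleton row. For this chain $t^2E_c\le4/\kappa$ and $3M\le R_0^2\le M(3+4/\kappa)$ uniformly in $t$. It verifies $\sigma\|\lambda^\star\|/(LR_0)<1$ with an explicit multiplier. It sets $\mu=24\eps/R^2$, so that by KKT any output meeting the scale-invariant criterion lies within $R/2$ of $w^\star$. Such an output must expose $\Omega(1/\alpha)$ chain coordinates:
\begin{itemize}
\item $\Omega(t\sqrt\kappa)$ coordinates in the communication family, each costing $\Theta(M)$ rounds;
\item $\Omega(\sqrt\kappa)$ coordinates in the affine family, each costing $\Theta(N_{\rm virt})=\Theta(\Xi)$ affine calls.
\end{itemize}
Your plan has no counterpart of these steps. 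In particular, nothing converts the objective/feasibility criterion into a distance requirement or excludes infeasible outputs that pass the scaled feasibility test. Also, in the paper's communication family the factor $\Xi$ enters through the slowed chain rate $\alpha\approx\sqrt{6/(\kappa t^2)}$ of the cross-link/consensus pair. It does not come from stacking a virtual affine path on top of the physical one.
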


\begin{theorem}[Nonsmooth cross-coupled upper bounds]
\label{thm:nonsmooth-upper}
For the full-domain convex criterion of Appendix~\ref{app:nonsmooth-proof}, and in a normalized model that additionally provides exact Euclidean projections onto the phase primal and dual balls, Cross-Sliding satisfies
\begin{equation}
N_{\partial f}=O(T_{\rm ns}^2),\quad N_{\rm aff}=O(\chix T_{\rm ns}),
\label{eq:ns-upper-main}
\end{equation}
and after restarting in the $\mu_f$-strongly convex case,
\begin{equation}
N_{\partial f}=O(T_{\rm nsc}^2),\quad N_{\rm aff}=O(\chix T_{\rm nsc}).
\label{eq:nsc-upper-main}
\end{equation}
The first-order counts have the standard centralized lower bounds; no matching $\chix$-dependent nonsmooth affine/communication lower bound is claimed. A physical decentralized round bound additionally requires an implementation and accounting of the global ball projections and scalar reductions, and is not inferred solely from a per-affine-call graph factor. For any fixed $J\ge2$, the normalized-oracle upper bounds replace $\chix$ by the projector-sum factor $\chiJ$; raw primitive and physical-round extensions require a regime-specific realization and are not obtained by analogy with the smooth-SC ledger.
\end{theorem}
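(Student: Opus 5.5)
The plan is to reduce Cross-Sliding to the known saddle-point sliding framework (gradient sliding / recursive sliding of Lan and coauthors, as used by \citet{yarmoshik2025coupled,yarmoshik2026mixed}) applied to the Lagrangian
\[
\mathcal L(u,y)=F(u)+\langle y,\widehat K u-\widehat v\rangle ,
\]
built on the separately normalized stack $\widehat K=[\widehat K_1;\widehat K_2]$. The only place the affine operator enters the complexity is through the spectrum of $\widehat K^\top\widehat K$ on $\Range(\widehat K^\top)$. By the exact-condition identity \eqref{eq:exactcondition}, together with Corollary~\ref{cor:inexact-whitening} for fixed-accuracy normalization, this spectrum lies in $[\lambda_-,\lambda_+]$ with $\lambda_+/\lambda_-=O(\chix^2)$.

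First, I will replace the feasibility constraint by a Chebyshev-filtered operator $\operatorname{Cheb}_N$ with $N=\lceil\sqrt{\lambda_+/\lambda_-}\rceil=O(\chix)$. The resulting squared operator $\mathcal P=I-\operatorname{Cheb}_N$ acts on $\Range(\widehat K^\top)$ with condition number $O(1)$, exactly as in Algorithm~\ref{alg:cross-apapc}. Each application of $\mathcal P$ costs $O(\chix)$ normalized affine calls. The constraint $\widehat K u=\widehat v$ is equivalent to $\mathcal P(u-u_0)=0$ for any feasible $u_0$, and the problem becomes a saddle problem with a well-conditioned bilinear coupling.

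Second, I will run the sliding scheme on this saddle problem. In the outer loop, the $O(T_{\rm ns})$ iterations each perform one application of $\mathcal P$ (hence $O(\chix)$ affine calls) and projections onto the phase primal and dual balls; these projections are where the additional projection oracle is used. In the inner loop, subgradient steps on $F$ run with prox-centers. The standard sliding accounting gives $O(T_{\rm ns})$ linear-operator evaluations and $O(T_{\rm ns}^2)$ subgradient calls. Here $M_{3R}$ bounds the subgradients on the $3R$-ball, which contains all iterates because the phase balls are projected onto explicitly.

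The main obstacle is converting the saddle gap into the full-domain criterion of Appendix~\ref{app:nonsmooth-proof}, namely objective suboptimality plus feasibility residual. I will take the dual ball radius to be a constant multiple of the optimal-multiplier norm, which is bounded in the normalized geometry by $O(M_{3R}/\sqrt{\lambda_-})$ after accounting for the filtering. I will then use the standard argument that the gap with the dual variable optimized over the ball controls both objective error and $\|\widehat K u-\widehat v\|$. The radius scaling must be checked so that $\chix$ enters only through $N$ and not a second time through the dual radius; the filtering normalization ensures this.

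For the strongly convex case I will restart. Each phase halves $\|u-u^\star\|^2$, with the phase radius shrinking geometrically and the phase cost matched to $R_k$. Summing the geometric series gives $O(T_{\rm nsc}^2)$ subgradients and $O(\chix T_{\rm nsc})$ affine calls, with the last phase dominating.

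The lower-bound remark follows by citing the standard centralized nonsmooth bounds with the affine part trivial ($\widehat K=0$ channel omitted).

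The fixed-$J$ clause follows verbatim with Proposition~\ref{prop:multi-channel}, which gives condition $O(\chiJ^2)$ and hence $N=O(\chiJ)$.
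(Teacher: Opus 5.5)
There is a genuine gap at exactly the step you flag as the main obstacle. Chebyshev filtering does not make $\chix$ enter only once, because it changes the feasibility metric. With coupling $\mathcal P(u-u_0)$, where $\tfrac12\Pi_{\mathcal N}\preceq\mathcal P\preceq\tfrac32\Pi_{\mathcal N}$ on $\mathcal N=\Range(\widehat K^\top)$, you face a dilemma, and each choice of dual radius costs an extra $\chix$.

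\emph{Large dual radius.} Take the radius $\Theta(M_{3R}/\sqrt{\lambda_-})=\Theta(\chix M_{3R})$, as you literally write. The restricted gap then does dominate $(M_{3R}/\sigma)\|\widehat Ku-\widehat v\|$. But sliding needs $O(\|\mathcal P\|R_yR/\eps)=O(\chix T_{\rm ns})$ applications of $\mathcal P$, each costing $O(\chix)$ normalized calls, i.e.\ $O(\chix^2T_{\rm ns})$ in total.

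\emph{Correct filtered radius.} The right multiplier scale for the filtered constraint is $\|y^\star\|\le2M_{3R}$, since $-g^\star\in\mathcal N$. With this radius you do get $O(\chix T_{\rm ns})$ affine calls. However, the gap then certifies only $M_{3R}\|\mathcal P(u-u_0)\|\lesssim\eps$, i.e.\ $\operatorname{dist}(u,\mathcal A)\lesssim\eps/M_{3R}$. The criterion \eqref{eq:ns-accuracy} instead measures $(M_{3R}/\sigma)\|\widehat Ku-\widehat v\|$. When the residual lies along the top singular direction, this equals $M_{3R}\sqrt{\kappa(\widehat K)}\operatorname{dist}(u,\mathcal A)$. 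For the canonical normalized stack, $\sqrt{\kappa(\widehat K)}\ge\chix$ by Corollary~\ref{cor:inexact-whitening}, so you are off by a factor $\chix$. Targeting $\eps/\chix$ to compensate gives $O(\chix^2T_{\rm ns})$ affine calls and $O(\chix^2T_{\rm ns}^2)$ subgradients. The same loss persists in your restarted strongly convex scheme, whose final phase has target $\asymp\eps$.

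The paper avoids this by not filtering at all. Lemma~\ref{lem:ns-dual-radius} gives $\|\lambda^\star\|\le M_{3R}/\sigma$, the dual ball has radius $R_\lambda=2M_{3R}/\sigma$, and the rescaling $\bar K_a=(R_\lambda/R_s)\bar K$ balances the primal and dual balls. The sliding count $O(L_{\bar K}R_\lambda R_s/\eta)$ then contains $\chix$ exactly once, through $L_{\bar K}/\sigma=\sqrt{\kappa(\bar K)}$. The restricted gap $F-F^\star+R_\lambda\|\bar Kw-\bar v\|$ dominates the criterion directly by \eqref{eq:gap-to-accuracy}. In the nonsmooth regime the operator count is already linear in $\|\bar K\|/\sigma$, so Chebyshev acceleration buys nothing.

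If you want to keep your filtered route, you must append a weighted final repair. For example, apply one second-kind filter $\UF$ of degree $\lceil\widehat\chi\rceil$ from Appendix~\ref{app:smooth-convex-proof}. Since $\|\bar Ks_{d_{\rm UF}}(Q)q\|\le\sqrt\ell\|q\|\le\sigma\|q\|$ and $|s_{d_{\rm UF}}|\le1$, this converts $\operatorname{dist}(u,\mathcal A)\lesssim\eps/M_{3R}$ into the required scaled residual bound at an additive $O(\chix)$ affine cost. It moves $u$ by at most $2\operatorname{dist}(u,\mathcal A)$, so $F$ changes by only $O(\eps)$ via the local bound $M_{3R}$.
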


\section{Experiments}
\label{sec:experiments}
The experiments are designed to test two different claims separately: (i) whether the theorem-level resource dependence on the cross geometry and the network appears with the predicted exponent, and (ii) whether the method remains stable on full, non-toy data and benefits from separate channel normalization under a common implementation.  The complete protocol, the controlled scaling plots, primitive-versus-normalized ledgers, and all additional tables are in Appendix~\ref{app:experiments}.

\paragraph{Full-data CC-MAC benchmark.}
We use vertically partitioned LIBSVM binary-regression instances \citep{changlin2011libsvm}. For $X=[X_1,\ldots,X_n]$ we solve
\begin{equation}
\min_w\ \frac{1}{2N}\|Xw-y\|^2+\frac{\lambda}{2}\|w\|^2,\qquad \lambda=10^{-2},
\label{eq:main-vfl-objective}
\end{equation}
through local weights $w_i$ and local copies $z_i\in\R^N$ of the shared prediction vector.  The two affine channels are
\begin{equation}
\frac1{\sqrt n}\left(\sum_{i=1}^nX_iw_i-\sum_{i=1}^n\alpha_i z_i\right)=0,
\qquad z_i-\frac1n\sum_{j=1}^nz_j=0,
\label{eq:main-vfl-channels}
\end{equation}
with $\sum_i\alpha_i=1$.  On the consensus subspace, \eqref{eq:main-vfl-channels} is exactly $Xw-z=0$ for every admissible $\alpha$, so changing $\alpha$ changes the two-channel geometry without changing the underlying feasible prediction relation.  This gives a representation-preserving controlled test of $\chix$, rather than a comparison of different optimization problems.

All reported runs use the complete training split, deterministic PyTorch FP64 on an NVIDIA H100, column-partitioned features, the same zero original-primal initialization, and a high-accuracy centralized reference. For the scaling and robustness suites, a run is successful when
\begin{equation}
\operatorname{RelErr}(w^k):=
\frac{\|w^k-w^\star\|^2}{\|w^0-w^\star\|^2}\le10^{-6}.
\label{eq:main-exp-stop}
\end{equation}
The method-comparison suites use the same threshold $10^{-6}$.
\paragraph{Full-data robustness.}
All $48/48$ Cross-APAPC configurations on full \texttt{a9a}, \texttt{mushrooms}, \texttt{w8a}, and \texttt{rcv1} reach \eqref{eq:main-exp-stop}; this includes the $47\,236$-feature sparse \texttt{rcv1} task with both $8$ and $16$ agents. Appendix~\ref{app:experiments} gives the dimensions, topologies, seeds, and primitive-resource ledger.

\paragraph{Five-seed method comparison.}
We compare six matrix-free implementations on full \texttt{a9a} and \texttt{mushrooms} with $n=8$, a ring network, feature-partition seeds $0,\ldots,4$, and target $10^{-6}$.  Cross-APAPC and joint APAPC share the same state representation, centralized reference, stopping rule, and normalized-channel interface.  The Yarmoshik-type generic coupled reduction, Tracking-ADMM, DPMM, and Condat--V{\~u} \citep{condat2013primal,vu2013splitting} curves are our CUDA ports/adaptations of the corresponding recurrences under this common benchmark interface; they are not presented as the authors' native software or native tuning.  Figures~\ref{fig:experiments-a9a}--\ref{fig:experiments-mushrooms} show every seed trajectory, their full empirical min--max envelope, and a median guide. Gradient evaluations and neighbor rounds are the common cross-method resources; native affine products are included as a diagnostic ledger but are not treated as equal-cost oracle units across different decompositions. All $60/60$ method--seed runs reach the target.

\begin{figure}[t]
\centering
\includegraphics[width=0.99\textwidth]{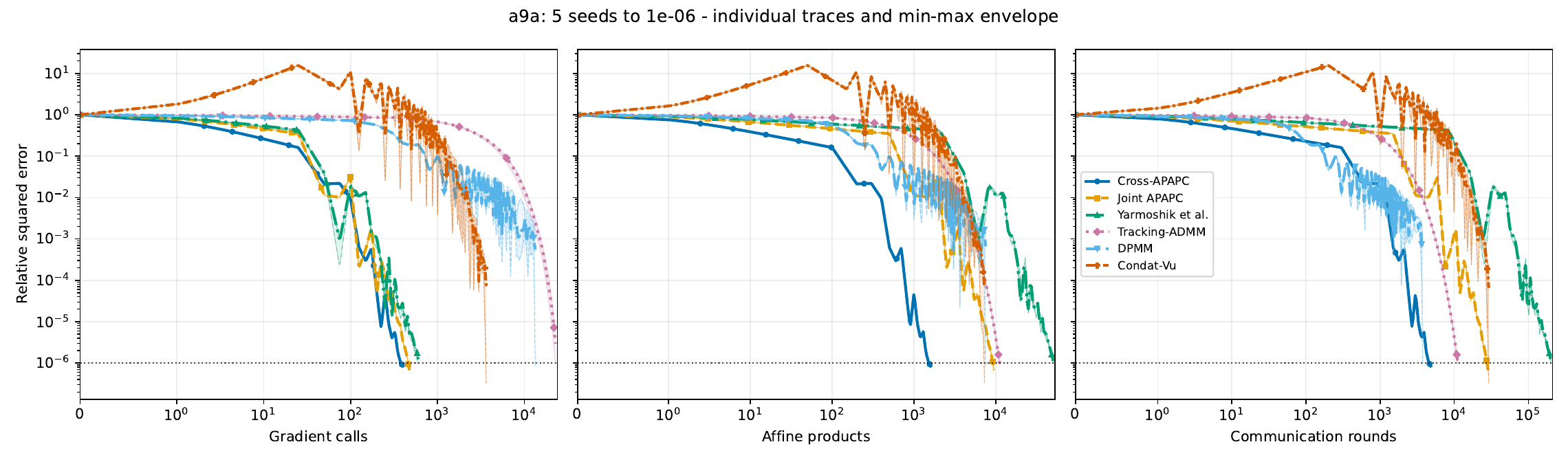}
\caption{Full-data \texttt{a9a} on H100 over five feature-partition seeds. Thin traces are individual seeds, shaded regions and dotted boundaries are the empirical min--max interval, and the thick line is the median. Panels use gradient evaluations, native affine products, and physical neighbor rounds; the horizontal dotted line is the $10^{-6}$ target.}
\label{fig:experiments-a9a}
\end{figure}

\begin{figure}[t]
\centering
\includegraphics[width=0.99\textwidth]{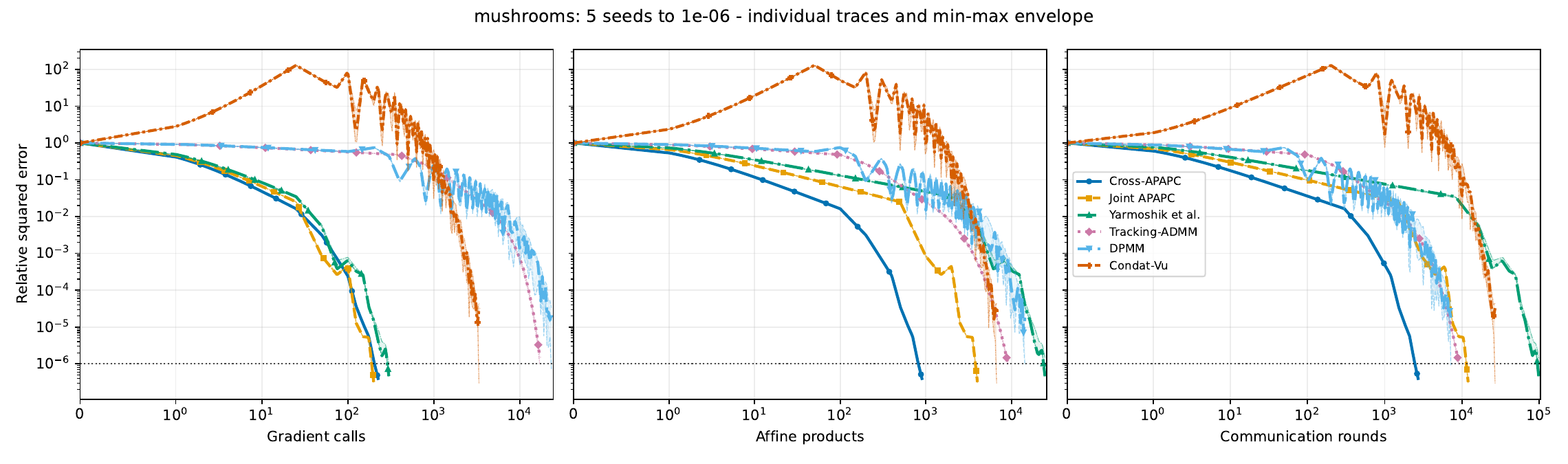}
\caption{Full-data \texttt{mushrooms} under the same five-seed protocol and interval convention as Figure~\ref{fig:experiments-a9a}. Cross-APAPC and joint APAPC differ only in separate versus joint channel normalization; the remaining curves are common-interface CUDA ports/adaptations detailed in Appendix~\ref{app:experiments}.}
\label{fig:experiments-mushrooms}
\end{figure}

The cleanest controlled algorithmic comparison is Cross versus Joint.  At the $10^{-6}$ threshold, the counts are identical across the five partitions for each method.  On \texttt{a9a}, Cross uses $400$ versus $475$ gradients, $800$ versus $4\,750$ normalized affine calls, and $4\,800$ versus $28\,500$ neighbor rounds; on \texttt{mushrooms}, the respective counts are $225$ versus $200$, $450$ versus $2\,000$, and $2\,700$ versus $12\,000$. Thus separate normalization reduces the normalized affine and communication ledgers by factors $5.94$ and $4.44$, without implying uniform gradient or wall-clock dominance. Appendix~\ref{app:experiments} reports the raw primitive interpretation.

\section{Discussion and Conclusions}
The results separate three sources of difficulty that are conflated by a single condition number: objective curvature, network propagation, and cross-channel geometry. After within-channel conditioning is removed, the two affine classes contribute the intrinsic factor $\chix$, while physical communication contributes $\sqrt{\kappa_W}$. Exact overlap is benign because common equations can be deduplicated; nearly coincident but nonidentical directions are hard because they leave a small positive singular value. The log-free smooth-convex result preserves this separation under its stated multiplier and spectral budgets.

This viewpoint is useful when the channel decomposition has operational meaning and cannot be replaced by arbitrary row mixing. Examples include vertical feature partitioning with a shared prediction variable, distributed estimation with local measurements coupled to a global state, and networked control or resource-allocation models combining nodewise physics with shared coordination constraints. In such problems separate normalization is implementable within the access policy, whereas a globally optimal joint preconditioner may require forbidden data aggregation or a different oracle. The Friedrichs factor then identifies when separate processing is sufficient and when the interface between otherwise well-conditioned channels is the actual bottleneck.

The experiments support this interpretation rather than a universal ranking of implementations. The controlled sweeps recover near-unit exponents in $\chix$ and $\chix\sqrt{\kappa_W}$; the full-data comparisons show a reproducible normalized-oracle advantage over joint normalization, including the additional $16$-agent \texttt{a8a} stress test in Appendix~\ref{app:experiments}. Native affine products and wall time remain implementation dependent, and the two-seed appendix interval is descriptive rather than inferential.

The scope is therefore deliberate. The nonsmooth result assumes exact projection access and claims no matching cross-dependent lower bound. A normalized affine call is not a universal raw-matvec or wall-clock unit; Appendix~\ref{app:resources} expands it into primitives. Important open questions are matching nonsmooth bounds, adaptive or cheaper normalization without prior spectral certificates, finite-sample tuning rules, and a sharp multi-channel geometry for $J\ge3$. These directions would turn the present two-channel structural characterization into a broader design principle for constrained decentralized optimization.

\clearpage
\subsection*{AI use statement}
In this work, we used generative AI tools to assist with developing and stress-testing theoretical arguments, formulating and refining mathematical claims and hypotheses, checking and editing proofs, reviewing experimental methodology and parameter choices, implementing and debugging research code, constructing synthetic test instances, interpreting diagnostic and experimental results, language translation where needed, surveying and summarizing related literature, and drafting, restructuring, and editing parts of the manuscript. We also used generative AI tools to assist with figure/table preparation, software-code editing, and reference formatting. We did not use generative AI to replace, clean, or reformat the original benchmark datasets or to perform qualitative or thematic data analysis; transcription of research material was not applicable. All AI-assisted mathematical claims and proofs were independently checked by the authors against the stated assumptions, cited literature was verified against the original sources, and AI-assisted code was executed and tested by the authors. We have reviewed all AI-assisted work and take responsibility for the final content of this submission, including text, claims, code, and figures produced with the aid of generative AI.

\appendix
\clearpage
\phantomsection
\section*{Notation Reference}
\label{app:notation}
\addcontentsline{toc}{section}{Notation Reference}

This table retains all notation reused across sections; proof-, algorithm-, experiment-, and construction-local symbols are defined where they appear.

\begingroup
\scriptsize
\setlength{\tabcolsep}{4pt}
\renewcommand{\arraystretch}{1.08}
\begin{longtable}{@{}>{\raggedright\arraybackslash}p{0.235\textwidth}>{\raggedright\arraybackslash}p{0.725\textwidth}@{}}
\caption{Global and recurring notation.}\label{tab:notation-reference}\\
\toprule
Symbol & Meaning \\
\midrule
\endfirsthead
\multicolumn{2}{c}{\tablename\ \thetable\ (continued)}\\
\toprule
Symbol & Meaning \\
\midrule
\endhead
\midrule
\multicolumn{2}{r}{Continued on next page}\\
\endfoot
\bottomrule
\endlastfoot

\multicolumn{2}{@{}l}{\textbf{Problem data and canonical reformulation}}\\[1pt]
$n$, $i$ & Number of agents and agent index, $i\in\{1,\ldots,n\}$. \\
$x_i$, $z$, $z_i$ & Local variable, shared variable, and local copy of the shared variable; $x_i\in\R^{d_i}$ and $z,z_i\in\R^q$. \\
$d_i,q,m,p_i,r_i$ & Local/shared dimensions, coupled-equality dimension, and row counts of $C_i,D_i$. \\
$A_i,B_i,b_i$; $C_i,c_i$; $D_i,e_i$ & Coupled-equality, local-affine, and shared-variable affine data in \eqref{eq:ccmac}. \\
$\bm x,\bm z$; $A,B,C,D$; $\bm b,\bm c,\bm e$ & Stacked variables, block-diagonal matrix stacks, and stacked right-hand sides. \\
$u$; $F(u)$ & Canonical stacked primal variable $u=\col(\bm x,\bm z)$ and lifted objective $F(u)=\sum_i f_i(x_i,z_i)$. \\
$w$; $\Phi(w)$ & Execution-space variable and objective after the stated canonical or decentralized lift. \\
$E_m,\Wp_m,\Wp_q$ & Normalized global-sum map and graph channels: $E_m=n^{-1/2}(\one^\top\otimes I_m)$, $\Wp_m=W\otimes I_m$, $\Wp_q=W\otimes I_q$. \\
$K_1,v_1$; $K_2,v_2$ & Two canonical affine channels and right-hand sides defined in \eqref{eq:channels}. \\
$K,v$ & Joint canonical stack $K=\col(K_1,K_2)$ and $v=\col(v_1,v_2)$. \\
$\mathcal L_j$ & Executable local channel $j$ in the decentralized lift. \\

\addlinespace[2pt]
\multicolumn{2}{@{}l}{\textbf{Network and linear-algebra quantities}}\\[1pt]
$\mathcal G=(\mathcal V,\mathcal E)$; $W$; $\kappa_W$ & Communication graph, symmetric PSD gossip matrix with $\Ker W=\operatorname{span}\{\one\}$, and $\kappa_W=\lambda_{\max}(W)/\lambda_{\min}^+(W)$; for $n=1$, $\kappa_W=1$ and the round count is zero. \\
$\Range(M),\Ker(M)$; $P_{\mathcal S}$ & Range and kernel of a linear map; Euclidean orthogonal projector onto a subspace $\mathcal S$. \\
$\sigma_{\max}(M),\sigma_{\min}^+(M)$ & Largest and smallest positive singular values. \\
$\kappa(M),\kappa_+(G)$ & Squared positive singular-value ratio and positive-eigenvalue ratio; $\kappa(M)=\kappa_+(M^\top M)$. \\

\addlinespace[2pt]
\multicolumn{2}{@{}l}{\textbf{Cross-channel geometry and normalization}}\\[1pt]
$J$; $M_j,U_j$ & Number of channel classes; channel maps and row spaces $U_j=\Range(M_j^\top)$. \\
$G_J,\chi_{\times,J}$ & Projector sum $G_J=\sum_jP_{U_j}$ and multi-channel spectral factor $\chi_{\times,J}=\sqrt{\kappa_+(G_J)}$. \\
$U,V$; $I$ & Two-channel row spaces and their exact intersection $I=U\cap V$. \\
$c_F(U,V),\theta_F$ & Friedrichs cosine after removing $I$ and Friedrichs angle $\theta_F=\arccos c_F$. \\
$\chix$ & Canonical factor $\sqrt{(1+c_F)/(1-c_F)}=\cot(\theta_F/2)$ for $U=\Range(K_1^\top)$ and $V=\Range(K_2^\top)$. \\
$\rho_{\rm loc},\chi_{\rm loc}$ & Friedrichs cosine and cross factor of the executable local pair. \\
$S_j$; $\widehat K_j,\widehat v_j$ & Separate left normalizer and normalized channel/right-hand-side pair. \\
$\widehat K,\widehat v$; $\bar K,\bar v$ & Generic normalized joint stack; $\bar K,\bar v$ denote the fixed normalized pair in the convex and nonsmooth appendices. \\
$\delta_{\rm wh}$ & Fixed relative whitening accuracy, chosen independently of the target accuracy $\eps$. \\
$\Xi$ & Supplied upper certificate on the relevant cross factor, in particular $\Xi\ge\chix$ when stated. \\

\addlinespace[2pt]
\multicolumn{2}{@{}l}{\textbf{Optimization, accuracy, and resources}}\\[1pt]
$L_f,\mu_f,\kappa_f$ & Objective smoothness, strong convexity, and condition number $\kappa_f=L_f/\mu_f$. \\
$w^0,w^\star$; $R_0,R_E,R$ & Initialization/solution, primal radius, supplied smooth-SC initial primal--dual energy bound, and convex/nonsmooth radius budget. \\
$\eps$; $\log_+(t)$ & Target accuracy and the truncated logarithm used in the smooth-SC statements. \\
$N_{\nabla f},N_{\partial f}$ & Numbers of gradient and subgradient evaluations. \\
$N_{\rm aff},N_{\rm comm}$ & Numbers of normalized affine-oracle calls and physical neighbor communication rounds. \\
$N_A,N_B,N_C,N_D$ & Raw products with the primitive affine blocks in a fixed executable realization. \\
$s_j^{\rm comm}$ & Communication cost of one normalized application of channel $j$ in the stated realization. \\
$T_{\rm sc},T_{\rm sc,E},T_{\rm cvx},T_{\rm ns},T_{\rm nsc}$ & Standard first-order scales used in the smooth-SC, smooth-convex, nonsmooth-convex, and nonsmooth-SC statements. \\
$\Lambda_{\rm cvx}$ & Scale-invariant smooth-convex multiplier parameter $\sigma_{\min}^+(\bar K)\|\lambda^\star\|/(L_fR)$. \\
$M_{3R}$ & Local nonsmooth regularity bound on the Euclidean ball $B(w^\star,3R)$. \\
$\bar\kappa,\Xi,\Omega$ & Class budgets on $\kappa_f$, the cross-factor certificate, and $\kappa_W$, respectively. \\

\end{longtable}
\endgroup

\paragraph{Appendix roadmap.}
For reviewer accessibility, Appendix~\ref{app:experiments} first gives the complete experimental protocol and the theorem-aligned scaling validation.  The remaining appendices follow the proof dependency graph: canonical reduction and the decentralized lift; projector/Friedrichs geometry; optimal separate preconditioning; generic normalization and the certified CC-MAC realization; smooth strongly convex upper and lower bounds; smooth-convex and nonsmooth extensions; and primitive resource accounting.

\section{Experimental Protocol, Scaling Validation, and Full H100 Results}
\label{app:experiments}

\paragraph{Hardware, arithmetic, and stopping rule.}
All reported experiments were executed with PyTorch in FP64 on an NVIDIA H100 80GB HBM3 (CUDA 12.8, PyTorch 2.11.0+cu128), with deterministic execution enabled and TF32 disabled.  Sparse feature blocks are stored as CUDA CSR tensors unless the density threshold selects a dense representation.  All methods start from the same zero original-primal point and are evaluated against a high-accuracy centralized reference solution.  We report the relative squared error in \eqref{eq:main-exp-stop}; all scaling, robustness, and method-comparison suites use the $10^{-6}$ stopping threshold.  For each run, our implementation stores a JSON summary and a CSV trace; these records are used to aggregate the reported curves and tables.

\paragraph{Full-data VFL formulation.}
The benchmark is \eqref{eq:main-vfl-objective} with the two CC-MAC channels \eqref{eq:main-vfl-channels}.  For coefficients $\alpha\in\R^n$ satisfying $\sum_i\alpha_i=1$, consensus implies the same relation $Xw-z=0$ independently of $\alpha$.  Hence the controlled geometry experiment varies the representation without changing the original feasible prediction relation or objective.  If $\chi_{\rm targ}\ge1$ and $c=(\chi_{\rm targ}^2-1)/(\chi_{\rm targ}^2+1)$, we choose a unit zero-sum vector $v$ and
\[
\alpha=\frac1n\one+\frac{c}{\sqrt{n(1-c^2)}}v,
\]
which realizes the requested Friedrichs factor up to floating-point roundoff.

\paragraph{Resource accounting.}
A \emph{normalized affine call} is one application of the separately normalized joint stack or its adjoint, exactly as in the theorem-level oracle.  The experiment counter also expands this call into \emph{raw affine products}: the polynomial $K_1$ normalizer contributes its internal $K_1/K_1^\top$ products, while $K_2$ actions charge physical neighbor rounds according to the measured graph condition.  These are different ledgers.  The main-text Cross-versus-Joint statement is made in the common normalized-channel oracle; raw primitive counts are reported separately below.  No wall-clock superiority claim is made.

\subsection{Controlled geometry and network scaling}
The geometry sweep uses the complete \texttt{a9a} and \texttt{w8a} training sets, $n=8$ agents, data seed $307$, feature-partition seed $0$, normalizer degree $28$, and
\[
\chi_{\rm targ}\in\{1,1.25,1.5,2,3,5,8\}.
\]
We combine these values with path, ring, grid, Erd\H{o}s--R\'enyi, and random-regular networks.  Deterministic topologies use one graph realization; the two random graph families use five graph seeds, giving $182$ runs in total.  Every run reaches the target error.  The log--log fits in Figure~\ref{fig:experiments-scaling} use the nonplateau regime $\chix\ge1.5$.  Resampling graph realizations as blocks gives
\begin{equation}
\alpha_{\rm aff}=0.988\ [0.966,1.009],\qquad
\alpha_{\rm comm}=0.980\ [0.959,1.003],
\label{eq:experimental-slopes}
\end{equation}
for $N_{\rm aff}\propto\chix^{\alpha_{\rm aff}}$ and $N_{\rm comm}\propto(\chix\sqrt{\kappa_W})^{\alpha_{\rm comm}}$, respectively.  A two-factor fit gives exponent $0.988$ in $\chix$ and $0.957$ in $\sqrt{\kappa_W}$; the graph-block bootstrap interval for the latter is $[0.886,1.039]$.

\begin{figure}[t]
\centering
\includegraphics[width=0.96\textwidth]{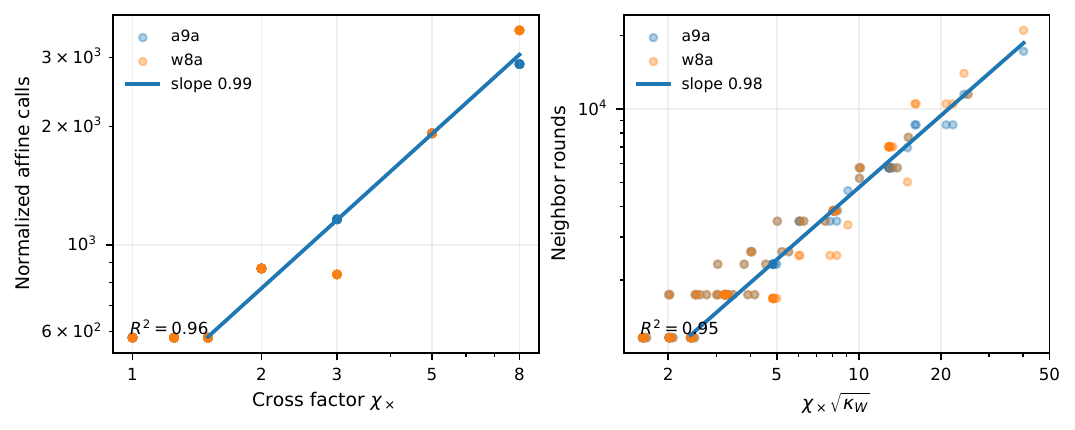}
\caption{Controlled full-data scaling on \texttt{a9a} and \texttt{w8a}.  Left: normalized affine calls versus the realized cross factor $\chix$.  Right: physical neighbor rounds versus $\chix\sqrt{\kappa_W}$.  Fits use the nonplateau regime $\chix\ge1.5$; both slopes are statistically consistent with the unit exponents predicted by the theory.}
\label{fig:experiments-scaling}
\end{figure}

\subsection{Full-data robustness}
We use feature-norm weights $\alpha_i\propto\|X_i\|_F$, polynomial separate normalization of degree $24$, and full training sets for \texttt{a9a}, \texttt{mushrooms}, \texttt{w8a}, and \texttt{rcv1}.  We run $n\in\{8,16\}$, two feature-partition seeds, and ring/random-regular networks (with two random-regular graph seeds), for $48$ total configurations.  All $48$ runs converge.  Table~\ref{tab:full-data-h100} reports medians and interquartile ranges over the available topology/seed realizations for each dataset/agent count; ``raw aff.'' expands the degree-24 channel normalizer.

\begin{table}[t]
\centering\scriptsize
\caption{Full-data Cross-APAPC robustness on H100. Brackets denote $[Q_1,Q_3]$; the final column is the worst terminal relative squared error among the corresponding runs.}
\label{tab:full-data-h100}
\setlength{\tabcolsep}{3.2pt}
\resizebox{\textwidth}{!}{\begin{tabular}{@{}lrrrrrrrr@{}}
\toprule
Dataset & $N$ & $d$ & agents & grad. & norm. aff. & raw aff. & comm. & max err.\\
\midrule
\texttt{a9a} & 32,561 & 123 & 8 & 145 & 580 & 29\,000 & 1\,160 [1\,160, 1\,595] & $3.9\times 10^{-5}$\\
\texttt{a9a} & 32,561 & 123 & 16 & 145 & 580 & 29\,000 & 1\,740 [1\,740, 3\,045] & $3.9\times 10^{-5}$\\
\texttt{mushrooms} & 8,124 & 112 & 8 & 110 & 440 & 22\,000 & 880 [880, 1\,210] & $8.1\times 10^{-5}$\\
\texttt{mushrooms} & 8,124 & 112 & 16 & 145 & 580 & 29\,000 & 1\,740 [1\,740, 3\,045] & $5.6\times 10^{-5}$\\
\texttt{w8a} & 49,749 & 300 & 8 & 145 & 580 & 29\,000 & 1\,160 [1\,160, 1\,595] & $8.3\times 10^{-5}$\\
\texttt{w8a} & 49,749 & 300 & 16 & 220 & 880 & 44\,000 & 2\,640 [2\,640, 4\,620] & $7.9\times 10^{-5}$\\
\texttt{rcv1} & 20,242 & 47,236 & 8 & 150 & 600 & 30\,000 & 1\,200 [1\,200, 1\,650] & $9.7\times 10^{-5}$\\
\texttt{rcv1} & 20,242 & 47,236 & 16 & 170 & 680 & 34\,000 & 2\,040 [2\,040, 3\,570] & $7.4\times 10^{-5}$\\
\bottomrule
\end{tabular}}
\end{table}

The high-dimensional \texttt{rcv1} instance has $20\,242$ samples and $47\,236$ features; it reaches the target for both $8$ and $16$ agents.

\subsection{Five-seed full-data method comparison}
For the broader comparison we use full \texttt{a9a} and \texttt{mushrooms}, $n=8$, a ring network, feature-partition seeds $0,\ldots,4$, and tolerance $10^{-6}$.  Cross-APAPC and joint APAPC share the same matrix-free state, reference solution, and normalized-oracle convention.  The Yarmoshik-type generic coupled reduction, Tracking-ADMM, DPMM, and Condat--V{\~u} curves in Figures~\ref{fig:experiments-a9a}--\ref{fig:experiments-mushrooms} are our CUDA matrix-free ports/adaptations of the corresponding recurrences under the common benchmark interface; they are not claimed to be the authors' native software or native tuning. All $60$ method--seed combinations converge; the largest terminal relative squared error is $9.934\cdot10^{-7}$.

The figures carry the seed-level information directly: thin curves show all five partitions and the envelope is the full empirical min--max interval. The middle panels report each implementation's native affine-action ledger for reproducibility; unlike gradients and physical neighbor rounds, it is not interpreted as a directly comparable oracle unit across methods.

\paragraph{Normalized versus raw Cross/Joint accounting.}
Because Cross-APAPC and joint APAPC are two realizations of the same APAPC template, their normalized-call comparison is clean. At tolerance $10^{-6}$ Cross reduces normalized affine calls and neighbor rounds by factors $5.94$ on \texttt{a9a} and $4.44$ on \texttt{mushrooms}. The separate normalizer nevertheless expands each normalized $K_1$ action into raw primitives, so the native affine-action ledger is reported independently in the middle panels rather than identified with the theorem-level oracle. The figures retain all five traces and their full empirical intervals instead of replacing them by a second median table; no raw-matvec or wall-clock dominance is claimed.

\subsection{Additional 16-agent \texttt{a8a} comparison}
As a larger-agent stress test, we repeat the six-method comparison on the complete \texttt{a8a} training split ($22\,696$ samples and $123$ features) with $n=16$, a ring network, feature-partition seeds $0$ and $1$, and tolerance $10^{-6}$.  All $12/12$ method--seed runs converge.  The worst terminal relative squared error is $9.971\cdot10^{-7}$.  Cross-APAPC reaches the target in $375$ gradient evaluations for both partitions, joint APAPC in $400$, and the Yarmoshik-type reduction in $650$; the remaining methods require between $12\,100$ and $33\,900$ outer iterations depending on the seed and recurrence.  These iteration numbers are descriptive rather than equal-cost comparisons, so Figure~\ref{fig:experiments-a8a-n16} retains the common gradient and neighbor-round ledgers and reports native affine actions separately.

\begin{figure}[H]
\centering
\includegraphics[width=0.99\textwidth]{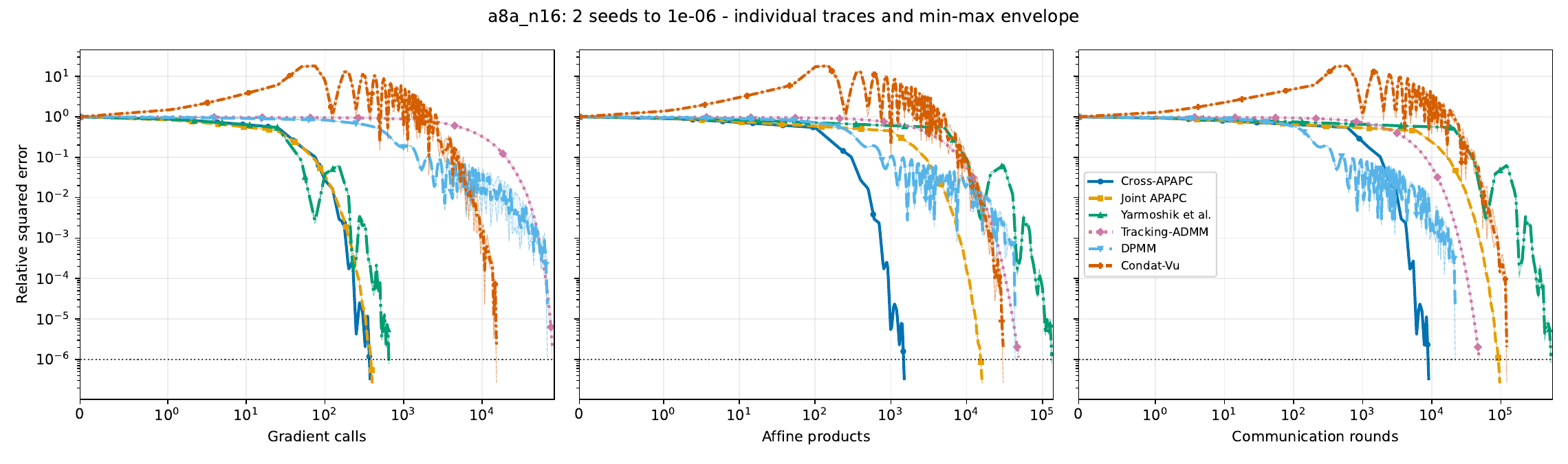}
\caption{Full-data \texttt{a8a} with $16$ agents on H100. Thin curves are the two feature-partition seeds; the shaded band and dotted boundaries show their empirical min--max range, and the thick curve is the median guide. All six methods reach the $10^{-6}$ target. With only two seeds, the band is a descriptive seed interval, not a confidence interval.}
\label{fig:experiments-a8a-n16}
\end{figure}

\paragraph{Reproducibility details.}
The experimental pipeline uses H100 YAML configurations together with dataset/partition and graph generators, CUDA sparse/dense operators, and explicit counters for gradients, normalized affine calls, raw affine products, and neighbor rounds. Per-run JSON summaries and CSV traces are aggregated to generate Figures~\ref{fig:experiments-a9a}--\ref{fig:experiments-a8a-n16}, Figure~\ref{fig:experiments-scaling}, and Table~\ref{tab:full-data-h100}. We also evaluated direct smooth-convex $\varepsilon$-sweeps and normalization ablations, but they are not used for any empirical claim reported here.

\section{Canonical Reduction and Decentralized Lift}\label{app:equivalence}
\begin{lemma}[Network range identities]\label{lem:network-range}
For every $s\ge1$, $\Ker(W\otimes I_s)=\operatorname{span}\{\one\}\otimes\R^s$ and $\Range(W\otimes I_s)=\{\col(u_1,\ldots,u_n):\sum_i u_i=0\}$.
\end{lemma}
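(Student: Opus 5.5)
The plan is to reduce both identities to the scalar case $s=1$ via the Kronecker structure, and then use the symmetry of $W\otimes I_s$ to turn the kernel statement into the range statement.

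\emph{Kernel.} Since $W$ is symmetric positive semidefinite with $\Ker W=\operatorname{span}\{\one\}$, it has an orthonormal eigenbasis $q_1=\one/\sqrt n,q_2,\ldots,q_n$ with eigenvalues $0=\lambda_1<\lambda_2\le\cdots\le\lambda_n$; strict positivity of $\lambda_2,\ldots,\lambda_n$ is exactly the kernel hypothesis. Then $W\otimes I_s=\sum_k\lambda_k\,q_kq_k^\top\otimes I_s$, and the projectors $q_kq_k^\top\otimes I_s$ are mutually orthogonal and sum to $I_{ns}$. Hence a vector $y$ lies in $\Ker(W\otimes I_s)$ iff $(q_kq_k^\top\otimes I_s)y=0$ for every $k\ge2$, that is, iff $y\in\Range(q_1q_1^\top\otimes I_s)=\operatorname{span}\{\one\}\otimes\R^s$.

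\emph{Range.} For a symmetric matrix, $\Range=\Ker^\perp$, and $W\otimes I_s$ is symmetric. So $\Range(W\otimes I_s)=(\operatorname{span}\{\one\}\otimes\R^s)^\perp$. For $y=\col(u_1,\ldots,u_n)$ and any $a\in\R^s$ we have $\langle y,\one\otimes a\rangle=\langle\sum_iu_i,a\rangle$. This vanishes for every $a$ iff $\sum_iu_i=0$, which is the stated range set.

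\emph{Edge case.} When $n=1$ the hypothesis forces $W=0$, so the kernel is all of $\R^s$ and the range is $\{0\}$, consistent with the formulas. There is no real obstacle here; the only point needing care is noting that $\Ker W=\operatorname{span}\{\one\}$ forces all other eigenvalues to be strictly positive, which is what makes the kernel no larger than $\operatorname{span}\{\one\}\otimes\R^s$.
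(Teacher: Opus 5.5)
Your proof is correct and follows the same route as the paper. The kernel identity comes from the Kronecker structure of $W\otimes I_s$, which you make explicit through the spectral decomposition of $W$. The range identity then follows from symmetry, $\Range(W\otimes I_s)=\Ker(W\otimes I_s)^\perp$, together with the zero-block-sum description of $(\operatorname{span}\{\one\}\otimes\R^s)^\perp$.
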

For the executable lift introduce $y\in\mathcal Y_\perp:=\Range(W\otimes I_m)$ and impose
\begin{equation}A\bm x+B\bm z+\alpha\Wp_m y=\bm b,\quad \alpha>0,\label{eq:coupled-localized}\end{equation}
with optional consensus-row scale $\gamma>0$, giving
\begin{equation}K^{\rm loc}_{\alpha,\gamma}=\begin{bmatrix}A&B&\alpha\Wp_m\\ C&0&0\\0&D&0\\0&\gamma\Wp_q&0\end{bmatrix}.\label{eq:Ksplit}\end{equation}
\begin{proposition}[Lift-scaling invariance]\label{prop:lift-invariance}
The canonical pair $\Range(K_1^\top),\Range(K_2^\top)$ is independent of $\alpha,\gamma$ and of the positive spectrum/edge weights of any connected symmetric gossip matrix with the same consensus kernel.
\end{proposition}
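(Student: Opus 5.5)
The plan is to note first that the canonical channels $K_1,K_2$ in \eqref{eq:channels} involve neither $\alpha$ nor $\gamma$: these appear only in the lifted operator $K^{\rm loc}_{\alpha,\gamma}$ of \eqref{eq:Ksplit}. So invariance in $(\alpha,\gamma)$ is immediate from the definitions. The only entry of $K_1,K_2$ that depends on the gossip matrix is $\Wp_q=W\otimes I_q$ in the second block row of $K_2$, and everything reduces to showing that $\Range(K_2^\top)$ depends on $W$ only through its kernel.

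For the $W$-dependence I would write
\[
\Range(K_2^\top)=\Range\!\begin{bmatrix}0&0\\ D^\top&\Wp_q\end{bmatrix}=\{0\}\times\bigl(\Range(D^\top)+\Range(\Wp_q^\top)\bigr),
\]
using that the range of a horizontally concatenated map is the sum of the ranges of its blocks. Since $W$ is symmetric, $\Range(\Wp_q^\top)=\Range(\Wp_q)$. By Lemma~\ref{lem:network-range} this equals $\{\col(u_1,\ldots,u_n):\sum_iu_i=0\}$, a set defined solely by $\Ker W=\operatorname{span}\{\one\}$.

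Any connected symmetric PSD gossip matrix $W'$ with the same consensus kernel, whatever its edge weights or positive spectrum, therefore gives the same subspace. Indeed, Lemma~\ref{lem:network-range} uses only that kernel, and symmetry gives $\Range W'=(\Ker W')^\perp$. Hence $\Range(K_2^\top)$ is unchanged. Since $K_1$ contains $E_m$, $A$, $B$, $C$ but no $W$, $\Range(K_1^\top)$ is unchanged as well. The pair, and with it $c_F$ and $\chix$, is thus invariant.

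The only point needing care is making this range-of-concatenation identity and the symmetric-kernel argument explicit, including that the zero first block column contributes nothing. If the statement is also meant to cover the lifted pair, I would handle it as follows. Eliminate $y$: the $x$–$z$ projection of the constraint set $\{A\bm x+B\bm z+\alpha\Wp_m y=\bm b,\ y\in\mathcal Y_\perp\}$ is $E_m(A\bm x+B\bm z-\bm b)=0$. This uses Lemma~\ref{lem:network-range}, $\Range(\alpha\Wp_m)=\Ker E_m$, and $\alpha>0$. That projection is again independent of $\alpha$ and of the weights of $W$, which recovers the canonical pair.
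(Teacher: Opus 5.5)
Your proof is correct and follows the paper's argument. The canonical channels in \eqref{eq:channels} contain no $\alpha,\gamma$, and the only graph dependence is through $\Range(\Wp_q^\top)=\Range(\Wp_q)=(\operatorname{span}\{\one\}\otimes\R^q)^\perp$, which is fixed by the consensus kernel; your decomposition $\Range(K_2^\top)=\{0\}\times(\Range(D^\top)+\Range(\Wp_q))$ just makes explicit a step the paper leaves implicit. Your optional remark about the lift concerns the projected feasible set (essentially the proof of Theorem~\ref{thm:equivalence}); the lifted row spaces themselves generally do vary with $\alpha$ and $W$, which is why the paper treats them separately in Proposition~\ref{prop:decentralized-bridge}.
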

\begin{proposition}[Canonical-to-local decentralized bridge]
\label{prop:decentralized-bridge}
Assume $n\ge2$. Let $\widetilde W=P_W(W)$, where $P_W(0)=0$, and suppose
\[
0<\underline\sigma_W\le\sigma_{\min}^+(\widetilde W)
\le\sigma_{\max}(\widetilde W)\le\overline\sigma_W,
\qquad
\overline\sigma_W/\underline\sigma_W=O(1).
\]
Put $H=[A\ B]$.
For $H\ne0$, choose a certified $\bar h\ge\|H\|^2$ and set
$\alpha=\sqrt{3\bar h}/\underline\sigma_W$, with $\gamma>0$.
Writing $\widetilde W_s:=\widetilde W\otimes I_s$, define
\begin{equation}
L_1=
\begin{bmatrix}A&B&\alpha\widetilde W_m\\ C&0&0\end{bmatrix},
\qquad
L_2=
\begin{bmatrix}0&D&0\\0&\gamma\widetilde W_q&0\end{bmatrix}.
\label{eq:local-bridge-channels}
\end{equation}
Let $U_{\rm loc}:=\Range(L_1^\top)$, $V_{\rm loc}:=\Range(L_2^\top)$,
$\rho_{\rm loc}:=c_F(U_{\rm loc},V_{\rm loc})$, and
$\chi_{\rm loc}:=\sqrt{(1+\rho_{\rm loc})/(1-\rho_{\rm loc})}$.
On the lifted space with $y\in\mathcal Y_\perp$,
\begin{equation}
\rho_{\rm loc}^2\le\tfrac14+\tfrac34c_F^2,
\qquad
\chi_{\rm loc}\le\sqrt3\,\chix.
\label{eq:bridge-angle}
\end{equation}
The feasible-set penalty below has $\kappa_{\Phi_{\rm loc}}=O(\kappa_f)$.
Each literal $L_j/L_j^\top$ call uses local products and one
$\widetilde W$ action, costing $\deg P_W$ neighbor rounds.
The certified graph polynomial is specified in
Appendix~\ref{app:ccmac-normalization}.
If $H=0$, remove the vacuous coupled equation and auxiliary
variable, take $\Phi_{\rm loc}=F$, and retain only the local $C$ channel and
the shared/consensus channel; the two row spaces are orthogonal.
\end{proposition}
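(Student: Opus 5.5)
The plan is to compare the local pair $(U_{\rm loc},V_{\rm loc})$ with the canonical pair $(U,V)=(\Range K_1^\top,\Range K_2^\top)$ through the auxiliary coordinate $y$. The argument has three parts: an orthogonal splitting of the first local row space, a norm-fraction bound for the $y$-component, and a bound on the remaining cosine in terms of $c_F$.

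\textbf{Step 1: splitting $U_{\rm loc}$.} The second channel has no $y$-component, so $V_{\rm loc}=V\times\{0\}$. This holds because $\Range(\widetilde W_q)=\Range(\Wp_q)$, by Lemma~\ref{lem:network-range} and the fact that $P_W(0)=0$ together with the spectral certificate. Hence only the first row space changes.

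A generic row vector of $L_1$, for a multiplier $(\lambda,\nu)$, is
\[
g=\bigl(A^\top\lambda+C^\top\nu,\ B^\top\lambda,\ \alpha\widetilde W_m\lambda\bigr).
\]
Decompose $\lambda=\lambda_\parallel+\lambda_\perp$, where $\lambda_\parallel\in\Ker\widetilde W_m$ is the block-constant part, so that $H^\top\lambda_\parallel=(E_mH)^\top\mu$ for some $\mu$. The part generated by $(\lambda_\parallel,\nu)$ is exactly $U\times\{0\}$.

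The part generated by $\lambda_\perp$ has $y$-component $\alpha\widetilde W_m\lambda_\perp$. Its norm is at least $\alpha\underline\sigma_W\|\lambda_\perp\|=\sqrt{3\bar h}\,\|\lambda_\perp\|$. Its $u$-component $H^\top\lambda_\perp$ has norm at most $\sqrt{\bar h}\,\|\lambda_\perp\|$. So every vector of $U_{\rm loc}$ has the form $(u_0+h,\ y)$ with $u_0\in U$, $\|h\|\le\|y\|/\sqrt3$, and $y$ determined by $\lambda_\perp$.

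\textbf{Step 2: the intersection is unchanged.} Next I would show $U_{\rm loc}\cap V_{\rm loc}=(U\cap V)\times\{0\}$. Any vector of $V_{\rm loc}$ has $y=0$. That forces $\widetilde W_m\lambda_\perp=0$, hence $\lambda_\perp=0$, so the vector lies in $U\times\{0\}$. Therefore the reduced subspaces used in~\eqref{eq:friedrichs} for the local pair are obtained from the canonical ones, and Lemma~\ref{lem:affine-dedup} style deduplication is compatible with the lift.

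\textbf{Step 3: the cosine estimate.} Take unit vectors $g=(u_0+h,y)\in U_{\rm loc}\cap I^\perp$ and $v\in V\cap I^\perp$. Then
\[
\langle g,(v,0)\rangle=\langle u_0,v\rangle+\langle h,v\rangle,
\qquad
|\langle u_0,v\rangle|\le c_F\|u_0\|,\qquad |\langle h,v\rangle|\le\|h\|.
\]
Write $t=\|y\|^2$. The key technical point is the orthogonality of the decomposition. I would arrange it by choosing the splitting with $u_0:=P_U$ of the $u$-part, and absorbing the $U$-component of $h$ into $u_0$, so that $h\perp U$. Then $\|u_0\|^2+\|h\|^2+t=1$ with $\|h\|^2\le t/3$. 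Maximizing $c_F\|u_0\|+\|h\|$ under these constraints, by a Cauchy–Schwarz or Lagrange computation, yields
\[
\rho_{\rm loc}^2\le\max_{s\in[0,1/4]}\bigl(c_F\sqrt{1-4s}+\sqrt{s}\bigr)^2 .
\]
A cruder but valid route is to use the convex combination $\rho^2\le(1-s')c_F^2+s'$ with $s'\le1/4$, which is exactly the stated bound $\tfrac14+\tfrac34c_F^2$. I expect this optimization, and in particular justifying that $h$ may be taken orthogonal to $U$ with $\|h\|^2\le\|y\|^2/3$ still holding, to be the main obstacle. What I would try first is to bound $\|h\|$ through the projection $P_{U^\perp}H^\top\lambda_\perp$, which only decreases its norm.

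\textbf{Step 4: the cross factor and remaining claims.} From the angle bound, $1-\rho_{\rm loc}^2\ge\tfrac34(1-c_F^2)$. Then
\[
\chi_{\rm loc}^2=\frac{(1+\rho_{\rm loc})^2}{1-\rho_{\rm loc}^2}\le\frac{4}{\tfrac34(1-c_F^2)}\le\frac{16}{3}\cdot\frac{1+c_F}{1-c_F}\cdot\frac{1}{(1+c_F)^2}.
\]
This gives a $\sqrt3\,\chix$ bound up to the constant bookkeeping, which I would tighten using $1+\rho_{\rm loc}\le 2$ more carefully.

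The remaining claims are direct. The penalty $\Phi_{\rm loc}=F+\tfrac{\mu_f}{2}\|y\|^2$ keeps $\kappa=O(\kappa_f)$. The per-call cost counts one $\widetilde W$ action. The case $H=0$ is immediate, since the rows then have disjoint variable supports.
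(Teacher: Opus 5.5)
Your Steps 1--2 follow the paper's own route. The row space of $L_1$ splits into the embedded canonical space $U\times\{0\}$ plus a part whose original-coordinate component is at most half of its norm (because $\alpha\underline\sigma_W=\sqrt{3\bar h}$). The intersection with $V\times\{0\}$ is unchanged.

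\textbf{The gap is in Step 3.} The bound $|\langle h,v\rangle|\le\|h\|$ discards the only information that makes the estimate work near $c_F=1$. Put $a=\|u_0\|$, $b=\|h\|$ with the constraint $a^2+4b^2\le1$. Then Cauchy--Schwarz gives $\max(c_Fa+b)^2=c_F^2+\tfrac14$, which is exactly the value of your displayed maximum over $s\in[0,\tfrac14]$. This is strictly weaker than $\tfrac14+\tfrac34c_F^2$. It is $\ge1$ once $c_F\ge\sqrt3/2$, so it gives no control of $\chi_{\rm loc}$ in precisely the regime that matters. The convex-combination bound $\rho^2\le(1-s')c_F^2+s'$ that you call ``cruder but valid'' is in fact stronger than what your computation yields, and it is the statement that needs proof.

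The missing step is to use $h\perp U$ on the $v$ side as well:
\begin{itemize}
\item Put $\tau:=\|P_Uv\|$. Since $v\in V\cap I^\perp$, we have $P_Uv\in U\cap I^\perp$, so $\tau\le c_F$.
\item Then $|\langle u_0,v\rangle|=|\langle u_0,P_Uv\rangle|\le\tau\|u_0\|$ and $|\langle h,v\rangle|=|\langle h,P_{U^\perp}v\rangle|\le\sqrt{1-\tau^2}\,\|h\|$.
\item Cauchy--Schwarz against $\|u_0\|^2+4\|h\|^2\le1$ now gives $\rho_{\rm loc}^2\le\tau^2+\tfrac14(1-\tau^2)\le\tfrac14+\tfrac34c_F^2$.
\end{itemize}
The paper does the same computation subspace-wise. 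It writes $\Range(L_1^\top)=(U\times\{0\})\oplus U_e$ and notes that the original-coordinate part of every $e\in U_e$ lies in $U^\perp$ with norm at most $\delta\|e\|$, $\delta\le\tfrac12$. This yields $\|P_{\Range(L_1^\top)}v\|^2\le\|P_Uv\|^2+\delta^2(1-\|P_Uv\|^2)$.

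\textbf{Two further points.} First, in Step 4 the bound $1+\rho_{\rm loc}\le2$ only yields $\chi_{\rm loc}\le(4/\sqrt3)\chix$. To reach $\sqrt3$ you must also put $\rho_{\rm loc}\le\tfrac12\sqrt{1+3c_F^2}$ into the numerator; then $\sqrt{1+3c_F^2}\le1+3c_F$ closes it. Alternatively, use the paper's inequality $\tfrac12\sqrt{1+3c_F^2}\le(1+2c_F)/(2+c_F)$, which rearranges directly to $\chi_{\rm loc}^2\le3\chix^2$.

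Second, and more seriously, $F+\tfrac{\mu_f}{2}\|y\|^2$ is not a feasible-set penalty. On the lifted feasible set, $y\in\mathcal Y_\perp$ is the unique solution of $\alpha\widetilde W_my=\bm b-A\bm x-B\bm z$. This is generally a nonconstant affine function of $(\bm x,\bm z)$, so adding $\|y\|^2$ changes the minimizer of the original problem. The paper instead uses
\[
\Phi_{\rm loc}=F+\tfrac r2\|A\bm x+B\bm z+\alpha\widetilde W_my-\bm b\|^2,\qquad r=\mu_f/(2\bar h),
\]
which vanishes on the constraint set. Strong convexity $\mu_{\Phi_{\rm loc}}\ge\mu_f/2$ on $y\in\mathcal Y_\perp$ follows from $\alpha^2\underline\sigma_W^2=3\bar h$ and $\|a+b\|^2\ge\tfrac12\|b\|^2-\|a\|^2$. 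The smoothness bound $L_{\Phi_{\rm loc}}\le L_f+r(\|H\|^2+\alpha^2\overline\sigma_W^2)=O(L_f)$ is where the hypothesis $\overline\sigma_W/\underline\sigma_W=O(1)$ enters. Your proposal never uses that hypothesis, which is a sign the penalty was misidentified.
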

\begin{corollary}[Recovery of the previous mixed model]\label{cor:old-orthogonal}
If $B_i=0$ for all $i$, then $\Range(K_1^\top)\perp\Range(K_2^\top)$ and $\chix=1$.
\end{corollary}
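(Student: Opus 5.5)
We prove the corollary by writing both row spaces explicitly in the canonical coordinates $u=\col(\bm x,\bm z)\in\R^d\times\R^{nq}$ and observing that, when $B=0$, they sit in complementary coordinate blocks.

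\textbf{Step 1: the row space of $K_1$.} With $B=\diag(B_1,\ldots,B_n)=0$, the definition \eqref{eq:channels} gives
\[
K_1=\begin{bmatrix}E_mA&0\\ C&0\end{bmatrix},\qquad
K_1^\top=\begin{bmatrix}A^\top E_m^\top&C^\top\\0&0\end{bmatrix}.
\]
Hence every vector in $U=\Range(K_1^\top)$ has the form $\col(g,0)$ with $g\in\R^d$. In other words, $U$ is contained in the $\bm x$-block $\R^d\times\{0\}$.

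\textbf{Step 2: the row space of $K_2$.} The channel $K_2=\begin{bmatrix}0&D\\0&\Wp_q\end{bmatrix}$ has zero $\bm x$-column independently of $B$. Therefore
\[
K_2^\top=\begin{bmatrix}0&0\\D^\top&\Wp_q\end{bmatrix},
\]
and $V=\Range(K_2^\top)$ is contained in the $\bm z$-block $\{0\}\times\R^{nq}$.

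\textbf{Step 3: orthogonality and the value of $c_F$.} The two coordinate blocks are orthogonal in the Euclidean inner product, so $U\perp V$. In particular $I=U\cap V=\{0\}$, and every inner product in \eqref{eq:friedrichs} vanishes, giving $c_F=0$. If either reduced subspace is zero, then $c_F=0$ by the stated convention. In both cases $\chix=\sqrt{(1+0)/(1-0)}=1$.

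\textbf{Remarks.} Deduplication via Lemma~\ref{lem:affine-dedup} is vacuous here, since $I=\{0\}$. The only point needing care is empty channels: if $K_1=0$ (e.g.\ $A=0$ and $C$ absent), the channel is omitted and the same convention gives $c_F=0$. There is no real obstacle; the argument is a direct block computation.
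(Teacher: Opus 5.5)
Your proof is correct and matches the paper's argument: with $B=0$, every vector of $\Range(K_1^\top)$ is supported on the $\bm x$ coordinates and every vector of $\Range(K_2^\top)$ on the $\bm z$ coordinates. The two spaces are therefore orthogonal, so $c_F=0$ and $\chix=1$. Your explicit block transposes and your handling of the zero-subspace convention spell out the paper's one-line proof in full.
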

\subsection{Proof of Lemma~\ref{lem:network-range}}
Since $W=W^\top\succeq0$ is connected, $\Ker W=\operatorname{span}\{\one\}$.  The Kronecker identity gives $\Ker(W\otimes I_s)=\operatorname{span}\{\one\}\otimes\R^s$.  Symmetry implies $\Range(W\otimes I_s)=\Ker(W\otimes I_s)^\perp$, which is exactly the set of block vectors with zero block sum. \qed

\subsection{Proof of Theorem~\ref{thm:equivalence}}
Let $r:=A\bm x+B\bm z-\bm b$.  The original coupled equality is $E_mr=0$.  Since $\Ker E_m=\{r:\sum_i r_i=0\}=\Range(\Wp_m)$ by Lemma~\ref{lem:network-range},
\[
E_mr=0\quad\Longleftrightarrow\quad r\in\Range(\Wp_m)
\quad\Longleftrightarrow\quad \exists y:\ r+\alpha\Wp_my=0
\]
for every $\alpha>0$.  Likewise $\Wp_q\bm z=0$ iff $z_1=\cdots=z_n$, and multiplying this row by $\gamma>0$ changes neither its nullspace nor feasibility.  The $C$ and $D$ rows are unchanged.  Therefore the original CC-MAC system, the canonical reduced system $K_1u=v_1,K_2u=v_2$, and every local lift \eqref{eq:Ksplit} have the same feasible original variables and the same objective value. \qed

\subsection{Proof of Proposition~\ref{prop:lift-invariance}}
The canonical matrices in \eqref{eq:channels} contain neither $\alpha$ nor $\gamma$, so their row spaces are manifestly invariant to these auxiliary scalings.  If $W'$ is any other symmetric positive-semidefinite gossip matrix on a connected graph with $\Ker W'=\operatorname{span}\{\one\}$, then
\[
\Range((W'\otimes I_q)^\top)=\Range(W'\otimes I_q)
=(\operatorname{span}\{\one\}\otimes\R^q)^\perp
=\Range(\Wp_q).
\]
Thus replacing the consensus realization changes its positive spectrum and hence its implementation cost, but not the row subspace entering the Friedrichs angle.  The $E_m$ block is the canonical normalized global-sum map and is independent of the graph altogether.  Hence $c_F$ and $\chix$ depend only on the CC-MAC affine data and the fixed Euclidean geometry of $(\bm x,\bm z)$. \qed

\subsection{Proof of Proposition~\ref{prop:decentralized-bridge}}
If $H=0$, then $A=B=0$ and feasibility makes the original coupled
equation vacuous. Removing its auxiliary variable leaves $F$,
local $C$ constraints on $\bm x$, and shared/consensus constraints
on $\bm z$. Their row spaces are orthogonal, so
$\chi_{\rm loc}=\chix=1$. Hence assume $H\ne0$.

Let $\mathcal H_0$ be the original-coordinate subspace and
$\mathcal Y$ the auxiliary-coordinate subspace.
Set $U_\alpha=\Range(L_1^\top)$ and $V=\Range(L_2^\top)$.
Then $V\subset\mathcal H_0$.
A row-space vector of $L_1$ is
\[
(A^\top a+C^\top c,\ B^\top a,\ \alpha\widetilde W_m a).
\]
Decompose $a=a_0+a_\perp$ into consensus and disagreement.
Its auxiliary component is zero precisely when $a_\perp=0$,
and the vectors generated by $(a_0,c)$ form the embedded
canonical row space $U_0=\Range(K_1^\top)$. Thus
\begin{equation}
U_\alpha\cap\mathcal H_0=U_0.
\label{eq:bridge-intersection}
\end{equation}
Write $U_\alpha=U_0\oplus U_e$ orthogonally.
Every $e\in U_e$ is the row
$(H^\top a_\perp,\alpha\widetilde W_m a_\perp)$ minus its
projection onto $U_0$. Therefore
\[
\|P_{\mathcal H_0}e\|\le\|H\|\|a_\perp\|,
\qquad
\|P_{\mathcal Y}e\|\ge
\alpha\underline\sigma_W\|a_\perp\|.
\]
Consequently
\begin{equation}
\|P_{\mathcal H_0}e\|\le\delta\|e\|,
\qquad
\delta:=\frac{\|H\|}
{\sqrt{\|H\|^2+\alpha^2\underline\sigma_W^2}}\le\frac12.
\label{eq:bridge-leakage}
\end{equation}
The intersections $U_\alpha\cap V$ and $U_0\cap V$ agree.
For a unit $v\in V$ orthogonal to this intersection, put
$t=\|P_{U_0}v\|^2\le c_F^2$.
Since $P_{\mathcal H_0}U_e\subset U_0^\perp$,
\[
\|P_{U_e}v\|\le\delta\sqrt{1-t},
\qquad
\|P_{U_\alpha}v\|^2
\le t+\delta^2(1-t)
\le\tfrac14+\tfrac34c_F^2.
\]
This proves the first bound in \eqref{eq:bridge-angle}.
For $s\in[0,1)$,
\[
(1+2s)^2-\frac{(2+s)^2}{4}(1+3s^2)
=\frac34s(1-s^2)(s+4)\ge0.
\]
Taking $s=c_F$ gives
$\rho_{\rm loc}(2+c_F)\le1+2c_F$, which rearranges to
$\chi_{\rm loc}^2\le3\chix^2$.

Define
\begin{equation}
\Phi_{\rm loc}(\xi):=F(\bm x,\bm z)
+\frac r2\|A\bm x+B\bm z+\alpha\widetilde W_m y-\bm b\|^2,
\qquad
\xi=\col(\bm x,\bm z,y),\quad r:=\frac{\mu_f}{2\bar h}.
\label{eq:bridge-penalty}
\end{equation}
The penalty vanishes on the localized equality.
For an increment $(d_u,d_y)$ with $d_y\in\mathcal Y_\perp$,
its Bregman divergence $D_{\Phi_{\rm loc}}(\xi+(d_u,d_y),\xi)$ satisfies
\[
D_{\Phi_{\rm loc}}(\xi+(d_u,d_y),\xi)\ge
\frac{\mu_f}{2}\|d_u\|^2+
\frac r2\|Hd_u+\alpha\widetilde W_m d_y\|^2
\ge
\frac{\mu_f}{4}\|d_u\|^2+
\frac{3\mu_f}{8}\|d_y\|^2.
\]
Here we used
$\|a+b\|^2\ge\frac12\|b\|^2-\|a\|^2$ and
$\|H\|^2\le\bar h$.
Thus $\mu_{\Phi_{\rm loc}}\ge\mu_f/2$, whereas
\[
L_{\Phi_{\rm loc}}\le L_f+r(\|H\|^2+\alpha^2\overline\sigma_W^2)
\le L_f+\frac{\mu_f}{2}
 \left(1+3\frac{\overline\sigma_W^2}{\underline\sigma_W^2}\right).
\]
Hence $\kappa_{\Phi_{\rm loc}}=O(\kappa_f)$.
Initialization $y^0=0$ and the stated channel and gradient actions
preserve $\mathcal Y_\perp$.
A polynomial of degree $\deg P_W$ is evaluated with that many
raw $W$ products. The map $E_m$ defines the canonical geometry
and is not evaluated by this lift. \qed

\subsection{Proof of Corollary~\ref{cor:old-orthogonal}}
When $B=0$, every vector in $\Range(K_1^\top)$ has support only in the $\bm x$ coordinates, whereas every vector in $\Range(K_2^\top)$ has support only in the $\bm z$ coordinates.  The spaces are orthogonal, so $c_F=0$ and $\chix=1$. \qed

\section{Projector Geometry, Multi-Channel Reduction, and Affine Deduplication}\label{app:weightedprojectors}
\subsection{Two weighted projectors}
\begin{lemma}[Spectrum of two weighted projectors]\label{lem:weighted-projectors}
Let $a,b>0$. On a principal-angle plane of $U,V$ with angle $\theta\in(0,\pi/2]$, the eigenvalues of $aP_U+bP_V$ are
\begin{equation}\lambda_\pm(\theta;a,b)=\frac{a+b\pm\sqrt{(a-b)^2+4ab\cos^2\theta}}{2}.\label{eq:projector-spectrum}\end{equation}
The common component $U\cap V$ has eigenvalue $a+b$; exclusive directions have eigenvalue $a$ or $b$. Among principal planes the smallest eigenvalue occurs at the Friedrichs cosine.
\end{lemma}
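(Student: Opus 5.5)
The plan is to use the canonical decomposition of a pair of subspaces into principal-angle planes (Jordan/Halmos two-subspace theorem). The ambient space splits orthogonally into five kinds of invariant pieces for both $P_U$ and $P_V$: $U\cap V$, $U\cap V^\perp$, $U^\perp\cap V$, $U^\perp\cap V^\perp$, and a direct sum of two-dimensional planes. Each plane is spanned by a pair of principal vectors $u_k\in U$, $v_k\in V$ with $\langle u_k,v_k\rangle=\cos\theta_k$ and $\theta_k\in(0,\pi/2)$. Because $aP_U+bP_V$ is block diagonal in this decomposition, its spectrum is the union of the spectra of the blocks. On the first four pieces the operator acts as $(a+b)I$, $aI$, $bI$ and $0$ respectively. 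This gives the claims about common and exclusive directions. If one prefers to avoid the general theorem in finite dimensions, I would build the planes directly from the SVD of $P_UP_V$ restricted to $U\ominus I$ and $V\ominus I$: the singular values are the cosines, and the singular vectors give the principal pairs.

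Next, on a fixed plane I would choose an orthonormal basis $e_1=u$, $e_2\perp u$ inside the plane, so that $v=\cos\theta\,e_1+\sin\theta\,e_2$. Then
\[
P_U=\begin{bmatrix}1&0\\0&0\end{bmatrix},\qquad
P_V=\begin{bmatrix}\cos^2\theta&\cos\theta\sin\theta\\ \cos\theta\sin\theta&\sin^2\theta\end{bmatrix}.
\]
The $2\times2$ matrix $aP_U+bP_V$ has trace $a+b$ and determinant $ab\sin^2\theta$. The quadratic formula gives $\lambda_\pm=\tfrac12\bigl(a+b\pm\sqrt{(a+b)^2-4ab\sin^2\theta}\bigr)$. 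Since $(a+b)^2-4ab\sin^2\theta=(a-b)^2+4ab\cos^2\theta$, this is exactly \eqref{eq:projector-spectrum}. At $\theta=\pi/2$ it reduces to $\{a,b\}$, consistent with the exclusive directions.

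Finally, for the minimum, $\lambda_-(\theta;a,b)$ is decreasing in $\cos^2\theta$, since the discriminant increases with $\cos^2\theta$. Over all principal planes, the smallest $\lambda_-$ is therefore attained at the largest cosine among the planes. The nontrivial planes lie in $I^\perp$, with principal vectors in $U\cap I^\perp$ and $V\cap I^\perp$. So the largest cosine among them is the supremum in \eqref{eq:friedrichs}, namely $c_F$. In finite dimensions this supremum is attained, being the top singular value of $P_{U\ominus I}P_{V\ominus I}$.

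The main obstacle is making this identification rigorous: we must show that the top singular value of the restricted product equals $c_F$, and that the exact-overlap directions (cosine $1$) have been removed, so that they never produce the small eigenvalue. Once the decomposition is in place, everything else is the routine $2\times2$ computation.
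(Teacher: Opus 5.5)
Your proposal is correct and follows the paper's proof essentially step for step. Both arguments use the principal-angle decomposition, the $2\times2$ trace/determinant computation ($a+b$ and $ab\sin^2\theta$) on each plane, and the monotonicity of $\lambda_-$ in $\cos^2\theta$ to place the minimum at the Friedrichs cosine. The step you flag as the main obstacle is actually immediate: for unit $u\in U\cap I^\perp$ and $v\in V\cap I^\perp$ one has $\langle u,v\rangle=\langle u,P_{U\cap I^\perp}P_{V\cap I^\perp}v\rangle$. Hence the supremum in \eqref{eq:friedrichs} equals $\|P_{U\cap I^\perp}P_{V\cap I^\perp}\|$, which is the largest principal cosine of the reduced pair, and that cosine is strictly below $1$ because the reduced subspaces meet only in $\{0\}$.
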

\subsection{Proof of Lemma~\ref{lem:weighted-projectors}}
Use the canonical principal-angle decomposition of $U+V$. Exact intersections, directions belonging to only one subspace, and two-dimensional principal-angle planes are mutually orthogonal. On a nontrivial plane choose an orthonormal basis $(u,w)$ such that $u\in U$ and a unit vector spanning the corresponding direction of $V$ is
\[
v=\cos\theta\,u+\sin\theta\,w,
\qquad \theta\in(0,\pi/2].
\]
Then
\[
P_U=\begin{bmatrix}1&0\\0&0\end{bmatrix},
\qquad
P_V=\begin{bmatrix}
\cos^2\theta&\cos\theta\sin\theta\\
\cos\theta\sin\theta&\sin^2\theta
\end{bmatrix}.
\]
Therefore
\[
aP_U+bP_V=
\begin{bmatrix}
a+b\cos^2\theta&b\cos\theta\sin\theta\\
b\cos\theta\sin\theta&b\sin^2\theta
\end{bmatrix}.
\]
Its trace is $a+b$ and its determinant is $ab\sin^2\theta$. The quadratic formula gives
\[
\lambda_\pm
=\frac{a+b\pm\sqrt{(a+b)^2-4ab\sin^2\theta}}2
=\frac{a+b\pm\sqrt{(a-b)^2+4ab\cos^2\theta}}2,
\]
which is \eqref{eq:projector-spectrum}. The derivative of $\lambda_-$ with respect to $|\cos\theta|$ is nonpositive, so among nontrivial principal-angle planes its smallest value occurs at the largest nonredundant cosine, i.e., the Friedrichs cosine. One-dimensional exclusive components have eigenvalue $a$ or $b$ and therefore cannot create a smaller value than the formula with $\cos\theta=0$. \qed

\subsection{Multi-channel reduction}
For $J$ separately normalized channel classes with row spaces $U_1,\ldots,U_J$, define
\begin{equation}
G_J:=\sum_{j=1}^J P_{U_j},\qquad \chiJ:=\sqrt{\kappa_+(G_J)}.
\label{eq:multi-channel-factor}
\end{equation}
If all channels are vacuous we set $\chiJ=1$ and charge no affine calls. Pairwise angles need not determine $G_J$ when $J\ge3$.
\begin{proposition}[Multi-channel spectral reduction]
\label{prop:multi-channel}
Suppose separate normalizations satisfy, for universal $0<a\le b$,
\[
aP_{U_j}\preceq \widehat M_j^\top\widehat M_j\preceq bP_{U_j},
\qquad j=1,\ldots,J,
\]
and let $\widehat M=\col(\widehat M_1,\ldots,\widehat M_J)$. Then
\begin{equation}
aG_J\preceq \widehat M^\top\widehat M\preceq bG_J,
\qquad
\kappa(\widehat M)\le \frac ba\,\chiJ^2.
\end{equation}
Exact separate whitening gives equality $\kappa(\widehat M)=\chiJ^2$. For $J=1$, $\chi_{\times,1}=1$. For $J=2$, if $U=U_1$, $V=U_2$ and $\chix$ is the Friedrichs factor defined in Section~\ref{sec:angle}, then
\begin{equation}
\chix\le \chi_{\times,2}\le \sqrt2\,\chix,
\label{eq:two-vs-multi-factor}
\end{equation}
with $\chi_{\times,2}=\chix$ when $U\cap V=\{0\}$.
\end{proposition}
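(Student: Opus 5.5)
The proof has three parts: a two-sided sandwich for the stacked normal operator, the exact-whitening case, and, for $J=2$, an application of Lemma~\ref{lem:weighted-projectors} with $a=b=1$.

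\textbf{Sandwich and condition number.} Stacking gives the identity $\widehat M^\top\widehat M=\sum_j \widehat M_j^\top\widehat M_j$. Summing the hypotheses $aP_{U_j}\preceq\widehat M_j^\top\widehat M_j\preceq bP_{U_j}$ over $j$ yields $aG_J\preceq\widehat M^\top\widehat M\preceq bG_J$.

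Both operators are positive semidefinite, and the sandwich forces their kernels to coincide. Indeed, $x^\top\widehat M^\top\widehat M x=0$ iff $x^\top G_Jx=0$ iff $x\perp U_1+\cdots+U_J$. So both operators have the common range $S:=U_1+\cdots+U_J$. Restricted to $S$, the Courant--Fischer min--max principle gives
\[
\lambda_{\max}(\widehat M^\top\widehat M)\le b\,\lambda_{\max}(G_J),\qquad
\lambda_{\min}^+(\widehat M^\top\widehat M)\ge a\,\lambda_{\min}^+(G_J).
\]
Dividing gives $\kappa(\widehat M)=\kappa_+(\widehat M^\top\widehat M)\le (b/a)\chiJ^2$.

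\textbf{Exact whitening and $J=1$.} Exact whitening means $a=b=1$, so $\widehat M^\top\widehat M=G_J$ and hence $\kappa(\widehat M)=\chiJ^2$ with equality. For $J=1$, $G_1=P_{U_1}$ has all positive eigenvalues equal to $1$, so $\chi_{\times,1}=1$.

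\textbf{The case $J=2$.} Here I use the principal-angle decomposition of $U+V$ from the proof of Lemma~\ref{lem:weighted-projectors}. This decomposition splits $U+V$ orthogonally into:
\begin{itemize}
\item the intersection $I=U\cap V$, where $P_U+P_V$ has eigenvalue $2$;
\item exclusive one-dimensional directions, with eigenvalue $1$;
\item principal planes with angles $\theta\in(0,\pi/2)$, with eigenvalues $1\pm\cos\theta$.
\end{itemize}

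The largest cosine among the nontrivial planes is exactly $c_F$, by the definition \eqref{eq:friedrichs} after removing $I$. Therefore
\[
\lambda_{\min}^+(P_U+P_V)=1-c_F .
\]
When no nontrivial plane exists ($c_F=0$), the smallest positive eigenvalue is $1$.

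The largest eigenvalue depends on whether the intersection is trivial:
\begin{itemize}
\item If $I\neq\{0\}$, then $\lambda_{\max}=2$.
\item If $I=\{0\}$, then $\lambda_{\max}=1+c_F$, and it equals $1$ when $c_F=0$.
\end{itemize}

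In the case $I=\{0\}$ this gives $\chi_{\times,2}^2=(1+c_F)/(1-c_F)=\chix^2$, which proves the equality clause. In general $1+c_F\le\lambda_{\max}\le 2$. This yields
\[
\chix^2\le\chi_{\times,2}^2\le \frac{2}{1-c_F}\le 2\,\frac{1+c_F}{1-c_F}=2\chix^2 ,
\]
and taking square roots gives \eqref{eq:two-vs-multi-factor}.

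\textbf{Main obstacle.} The delicate step is the eigenvalue bookkeeping. I need to justify that the smallest positive eigenvalue comes from the Friedrichs plane and not from an exclusive direction; this holds because $1-c_F\le 1$. I also need to check that the degenerate cases (a reduced subspace equal to zero, or $c_F=0$) still satisfy the stated bounds. Both follow from the orthogonal decomposition once its components are listed explicitly.
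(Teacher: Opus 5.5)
Your proposal is correct and takes essentially the same route as the paper. You sum the channelwise sandwiches, treat exact whitening as the case $a=b=1$, and for $J=2$ read off the eigenvalues $2$, $1$, and $1\pm\cos\theta$ of $P_U+P_V$ from the principal-angle decomposition in Lemma~\ref{lem:weighted-projectors}. One minor slip: if $U=V\neq\{0\}$ there are no exclusive directions, so the smallest positive eigenvalue is $2$, not $1$. This does not affect the result, because your final chain only uses $\lambda_{\min}^+(P_U+P_V)\ge 1-c_F$, and $\chix=1$ in that case.
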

\begin{proof}
Summing the $J$ spectral-equivalence inequalities gives the displayed PSD sandwich; exact whitening has $\widehat M_j^\top\widehat M_j=P_{U_j}$. The $J=2$ comparison follows from Lemma~\ref{lem:weighted-projectors}: exact common directions contribute only eigenvalue $2$, while every nontrivial principal-angle block contributes $1\pm\cos\theta$. Hence $\kappa_+(P_U+P_V)$ lies between $\chix^2$ and $2\chix^2$, with equality to $\chix^2$ in the irredundant case.
\end{proof}

\begin{lemma}[Pairwise angles do not determine multi-channel conditioning]
\label{lem:pairwise-insufficient}
For every $c\in(0,1/2)$ there are two triples of one-dimensional subspaces with identical pairwise Friedrichs cosines, all equal to $c$, but different spectra of $G_3=\sum_{j=1}^3P_{U_j}$.
\end{lemma}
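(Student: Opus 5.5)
We will prove Lemma~\ref{lem:pairwise-insufficient} by an explicit construction with three unit vectors $u_1,u_2,u_3\in\R^3$ and $U_j=\operatorname{span}\{u_j\}$. The key reduction is that $G_3=\sum_jP_{U_j}=\sum_j u_ju_j^\top=MM^\top$, where $M=[u_1\ u_2\ u_3]$. Hence the nonzero spectrum of $G_3$ equals that of the Gram matrix $\Gamma=M^\top M$, whose entries are $\Gamma_{jj}=1$ and $\Gamma_{jk}=\langle u_j,u_k\rangle$.

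For distinct one-dimensional subspaces the intersection is trivial, so by \eqref{eq:friedrichs} the pairwise Friedrichs cosine is $|\langle u_j,u_k\rangle|$. We therefore only need two admissible Gram matrices whose off-diagonal entries all have absolute value $c$ but whose signs differ.

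We take the first triple with all inner products equal to $+c$, and the second with all inner products equal to $-c$. The corresponding Gram matrices are $\Gamma_\pm=(1\mp c)I\pm c\,\one\one^\top$. Their eigenvalues are:
\begin{itemize}[leftmargin=1.2em,itemsep=0.5pt,topsep=1pt,parsep=0pt]
\item $\Gamma_+$: eigenvalues $1+2c$ and $1-c$ (the latter with multiplicity two);
\item $\Gamma_-$: eigenvalues $1-2c$ and $1+c$ (the latter with multiplicity two).
\end{itemize}
For $c\in(0,1/2)$ all four values are strictly positive, so both matrices are positive definite. Each is then realized as a Gram matrix of three linearly independent unit vectors in $\R^3$, for instance via the Cholesky factor or the symmetric square root of $\Gamma_\pm$. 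The condition $c<1/2$ is used exactly here, since $\Gamma_-$ is positive definite only when $1-2c>0$. Because $\Gamma_\pm\succ0$, the vectors in each triple are pairwise non-parallel. This confirms that every pairwise intersection is $\{0\}$ and that each pairwise Friedrichs cosine is $c$.

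Finally, $G_3$ has spectrum $\{1+2c,1-c,1-c\}$ in the first case and $\{1-2c,1+c,1+c\}$ in the second. These multisets differ whenever $c>0$, for example because the largest eigenvalues $1+2c$ and $1+c$ differ. The resulting condition numbers also differ, $(1+2c)/(1-c)$ versus $(1+c)/(1-2c)$, since $(1+2c)(1-2c)<(1+c)(1-c)$. So $\chi_{\times,3}$ itself is not determined by the pairwise angles, which is the substantive content of the lemma.

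No step is a real obstacle. The only points requiring care are confirming that the sign of an inner product is invisible to the Friedrichs cosine because of the absolute value in \eqref{eq:friedrichs}, and checking positive definiteness so that both Gram matrices are realizable.
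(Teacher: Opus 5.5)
Your proof is correct and follows the paper's argument. The paper also takes two positive definite Gram matrices whose off-diagonal entries all have magnitude $c$, and uses the fact that the nonzero spectrum of $G_3$ equals the generator Gram spectrum; its second matrix has sign pattern $(c,c,-c)$ instead of your all-$(-c)$ pattern, but flipping the sign of one generator turns one into the other without changing the lines, so both give the eigenvalues $\{1-2c,1+c,1+c\}$.
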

\begin{proof}
Choose unit generators whose Gram matrices are
\[
H_+=\begin{bmatrix}1&c&c\\c&1&c\\c&c&1\end{bmatrix},\qquad
H_-=\begin{bmatrix}1&c&c\\c&1&-c\\c&-c&1\end{bmatrix}.
\]
Both are positive definite for $0<c<1/2$, and all off-diagonal absolute inner products equal $c$. Their eigenvalues are respectively
$\{1-c,1-c,1+2c\}$ and $\{1-2c,1+c,1+c\}$. The nonzero eigenvalues of $\sum_jP_{U_j}$ equal those of the generator Gram matrix, so the two projector sums have different condition numbers despite identical pairwise angles.
\end{proof}

\subsection{Exact common affine directions}

\begin{lemma}[Affine deduplication of exact common row directions]
\label{lem:affine-dedup}
Let $M_j\in\mathbb R^{p_j\times d}$ and suppose
\[
\mathcal F:=\{x:M_1x=v_1,\ M_2x=v_2\}\neq\varnothing.
\]
Set
\[
U=\Range(M_1^\top),\qquad V=\Range(M_2^\top),\qquad I=U\cap V,
\]
and choose orthogonal decompositions
\[
U=I\oplus U_0,\qquad V=I\oplus V_0
\]
with orthonormal basis matrices $Q_I,Q_1,Q_2$ for $I,U_0,V_0$, respectively.
For any $x_\star\in\mathcal F$, put
\[
c_I=Q_I^\top x_\star,\qquad c_1=Q_1^\top x_\star,\qquad c_2=Q_2^\top x_\star.
\]
Then
\[
\mathcal F
=
\{x:Q_I^\top x=c_I,\ Q_1^\top x=c_1,\ Q_2^\top x=c_2\}.
\]
In particular, there is a left map $R_2$ with $R_2M_2=Q_2^\top$ and
$R_2v_2=c_2$ such that
\[
\mathcal F
=
\{x:M_1x=v_1,\ R_2M_2x=R_2v_2\}.
\]
Thus an exact common row direction need be retained in only one affine channel.
\end{lemma}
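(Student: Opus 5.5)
The plan is to reduce everything to the row spaces. The first step is the standard fact that for a consistent system $Mx=v$ with any solution $x_\star$, the solution set is $x_\star+\Ker M=x_\star+\Range(M^\top)^\perp$. Applying this to each channel gives
\[
\mathcal F=x_\star+(U^\perp\cap V^\perp)=x_\star+(U+V)^\perp .
\]
The solution set therefore depends only on $x_\star$ and the subspace $U+V$.

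The second step is to identify $U+V$ with the span of the three orthonormal blocks. Since $U=I\oplus U_0$ and $V=I\oplus V_0$, we have $U+V=I+U_0+V_0=\Range([Q_I\ Q_1\ Q_2])$. These three subspaces need not be mutually orthogonal: $U_0$ and $V_0$ may be at a nonzero angle. That is harmless here, since only the span matters. Hence
\[
(U+V)^\perp=\Ker Q_I^\top\cap\Ker Q_1^\top\cap\Ker Q_2^\top ,
\]
and the system $Q_I^\top x=c_I$, $Q_1^\top x=c_1$, $Q_2^\top x=c_2$ has $x_\star$ as a solution by definition of the $c$'s. Its solution set is therefore also $x_\star+(U+V)^\perp=\mathcal F$, which proves the first identity. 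The definition of $c_I,c_1,c_2$ seems to depend on the choice of $x_\star$. I will remark that it does not, because any two elements of $\mathcal F$ differ by a vector in $(U+V)^\perp$, which is annihilated by all three $Q$'s.

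For the "in particular" part, the task is to construct $R_2$. Since $\Range(Q_2)=V_0\subseteq V=\Range(M_2^\top)$, each column of $Q_2$ is $M_2^\top$ applied to some vector. Concretely, take $R_2:=Q_2^\top M_2^{+}$, with $M_2^{+}$ the Moore--Penrose pseudoinverse. Then
\[
R_2M_2=Q_2^\top M_2^{+}M_2=Q_2^\top P_V=(P_VQ_2)^\top=Q_2^\top .
\]
Consistency gives $M_2x_\star=v_2$, so $R_2v_2=R_2M_2x_\star=Q_2^\top x_\star=c_2$.

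The final step is to check that $\{x:M_1x=v_1,\ R_2M_2x=R_2v_2\}$ equals $\mathcal F$. The constraint $M_1x=v_1$ is equivalent to $x\in x_\star+U^\perp$. As in the first step, this is in turn equivalent to $Q_I^\top x=c_I$ and $Q_1^\top x=c_1$, because $\Range([Q_I\ Q_1])=U$. The second constraint is exactly $Q_2^\top x=c_2$. The first identity then gives equality with $\mathcal F$. The only point needing care is the pseudoinverse identity $M_2^{+}M_2=P_V$ together with $P_VQ_2=Q_2$. Everything else is routine subspace bookkeeping, so I do not expect a genuine obstacle.
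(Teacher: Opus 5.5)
Your proof is correct and follows essentially the same route as the paper. Both arguments reduce each channel to $x-x_\star\in\Ker(M_j)=\Range(M_j^\top)^\perp$, re-express this through the orthonormal bases $Q_I,Q_1,Q_2$, and use $V_0\subseteq\Range(M_2^\top)$ to obtain $R_2$ with $R_2M_2=Q_2^\top$. Your only refinements are handling $(U+V)^\perp$ in one step rather than channel by channel, and giving the explicit choice $R_2=Q_2^\top M_2^{+}$ where the paper only asserts that such a map exists.
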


\begin{proof}
Since $x_\star$ is feasible,
\[
M_1x=v_1
\iff M_1(x-x_\star)=0
\iff x-x_\star\in\Ker(M_1)=U^\perp.
\]
Because $U=I\oplus U_0$, this is equivalent to
$Q_I^\top x=c_I$ and $Q_1^\top x=c_1$.
Likewise, $M_2x=v_2$ is equivalent to
$Q_I^\top x=c_I$ and $Q_2^\top x=c_2$.
The same common right-hand side appears in both descriptions precisely because the joint
system is consistent.

Moreover, $\Range(Q_2)\subseteq\Range(M_2^\top)$, so
$Q_2^\top=R_2M_2$ for some $R_2$. Since $v_2=M_2x_\star$,
$R_2v_2=Q_2^\top x_\star=c_2$. The first channel still imposes the common
equations, hence replacing the second by its $V_0$ component leaves
$\mathcal F$ unchanged.
\end{proof}

The lemma is an algebraic equivalence statement; it does not assert a
decentralized or zero-cost procedure for constructing $R_2$. A merely
near-common direction cannot be removed by this argument: if its principal
angle is $\eta>0$, it belongs to neither exact intersection and its contribution
to the cross factor diverges as $\eta\downarrow0$.

\section{Friedrichs Geometry and Cross-Angle Conditioning}
\label{app:friedrichs-details}

\subsection{Representation-equivalent two-agent example}\label{app:two-agent-example}

Take $f_i(x_i,z)=\tfrac12(x_i^2+z^2)$, no local equations, and
$x_1+x_2+2z=1$. With copies $z_1,z_2$, every $t\in\R$ gives the equivalent constraints
\[
x_1+x_2+(1+t)z_1+(1-t)z_2=1,\qquad z_1-z_2=0.
\]
The term $t(z_1-z_2)$ vanishes at consensus, so the original feasible set and objective
are unchanged. Using one normalized consensus row, the two channels are
\[
M_1(t)=\tfrac1{\sqrt2}[1\ \ 1\ \ 1+t\ \ 1-t],\qquad
M_2=\tfrac1{\sqrt2}[0\ \ 0\ \ 1\ \ {-1}],
\]
with right-hand sides $1/\sqrt2$ and $0$. Their row spaces are the spans of these
rows: they describe constraint normals, not feasible points. Normalizing each row
to unit length gives
\[
c_F(t)=\frac{|t|}{\sqrt{2+t^2}},\qquad
\kappa([\widehat M_1(t);\widehat M_2])=
\frac{1+c_F(t)}{1-c_F(t)}.
\]
At $t=0$ the channels are orthogonal; increasing $|t|$ makes their normalized rows
nearly parallel without changing the optimization problem. Separate normalization cannot rotate these row spaces, whereas an equivalent redistribution that mixes the two channel classes can. Such cross-channel redistribution changes the resource-separated oracle representation and is not counted as channelwise preconditioning. The Friedrichs cosine below captures this distinction.

\subsection{Full weighted conditioning statement}
\begin{theorem}[Conditioning and optimal scalar balancing of two cross-coupled affine channels]
\label{thm:angle}
For the actual nonzero maps $M_1,M_2$, let
\[
\mu_j=\sigma_{\min}^+(M_j)^2,\qquad L_j=\sigma_{\max}(M_j)^2,
\qquad j\in\{1,2\},
\]
put $I=U\cap V$ and $c=\cF(U,V)<1$, and define $M_\eta=[M_1;\eta M_2]$ for $\eta>0$.
Its nonzero singular values satisfy
\begin{align}
\sigma_{\max}(M_\eta)^2&\le\Lambda_\eta:=
\begin{cases}
\dfrac{L_1+\eta^2L_2+\sqrt{(L_1-\eta^2L_2)^2+4\eta^2L_1L_2c^2}}{2},&I=\{0\},\\[2pt]
L_1+\eta^2L_2,&I\ne\{0\},
\end{cases}\label{eq:maxbound}\\
\sigma_{\min}^+(M_\eta)^2&\ge
\delta_\eta:=\frac{\mu_1+\eta^2\mu_2-
\sqrt{(\mu_1-\eta^2\mu_2)^2+4\eta^2\mu_1\mu_2c^2}}{2}.
\label{eq:minbound}
\end{align}
Hence $\kappa(M_\eta)\le\Lambda_\eta/\delta_\eta$.
If $I=\{0\}$, this envelope, which uses only $(\mu_j,L_j,c)$, is minimized over scalar balances by
\begin{equation}
\eta_\star^2=\sqrt{\frac{\mu_1L_1}{\mu_2L_2}}.
\label{eq:optimal-scalar-balance}
\end{equation}
In that irredundant case, if $c=0$, then
\begin{equation}
\kappa(M_{\eta_\star})\le\max\{\kappa(M_1),\kappa(M_2)\}.
\label{eq:orthogonal-scalar-balance}
\end{equation}
Still assuming $I=\{0\}$, if also $M_1^\top M_1=P_U$ and $M_2^\top M_2=P_V$, then
\begin{equation}
\kappa\!\left(\begin{bmatrix}M_1\\M_2\end{bmatrix}\right)
=\frac{1+c}{1-c}.
\end{equation}
\end{theorem}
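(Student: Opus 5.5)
Write $G_\eta:=M_\eta^\top M_\eta=M_1^\top M_1+\eta^2M_2^\top M_2$, so the nonzero singular values of $M_\eta$ squared are the positive eigenvalues of $G_\eta$. Its range is $U+V$.

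\textbf{Sandwich step.} The plan is to sandwich $G_\eta$ between weighted projector sums and then read off the spectrum from Lemma~\ref{lem:weighted-projectors}. Since $M_j^\top M_j$ has range exactly $U$ (resp.\ $V$), we have
\[
\mu_1P_U\preceq M_1^\top M_1\preceq L_1P_U,\qquad \mu_2P_V\preceq M_2^\top M_2\preceq L_2P_V .
\]
Consequently
\[
\mu_1P_U+\eta^2\mu_2P_V\preceq G_\eta\preceq L_1P_U+\eta^2L_2P_V .
\]
All three operators have range $U+V$. Courant--Fischer restricted to $U+V$ therefore gives
\[
\lambda_{\min}^+(G_\eta)\ge\lambda_{\min}^+(\mu_1P_U+\eta^2\mu_2P_V),\qquad \lambda_{\max}(G_\eta)\le\lambda_{\max}(L_1P_U+\eta^2L_2P_V).
\]

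\textbf{Applying Lemma~\ref{lem:weighted-projectors}.} Take $a=\mu_1$, $b=\eta^2\mu_2$ for the lower bound and $a=L_1$, $b=\eta^2L_2$ for the upper bound. The positive spectrum of $aP_U+bP_V$ decomposes into four kinds of component. The common part $I$ contributes $a+b$. Exclusive directions contribute $a$ or $b$. Each principal plane contributes $\lambda_\pm(\theta;a,b)$ with $\cos\theta\le c$.

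For the minimum: $\lambda_-$ is decreasing in $|\cos\theta|$. Moreover $\lambda_-(\theta;a,b)\le\min\{a,b\}\le a+b$. So the smallest positive eigenvalue is at least $\lambda_-$ evaluated at $\cos\theta=c$, which is exactly $\delta_\eta$. This holds in the degenerate cases too, since $\lambda_-$ at $c=0$ equals $\min\{a,b\}$.

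For the maximum: $\lambda_+$ is increasing in $|\cos\theta|$ and dominates $a$ and $b$. When $I=\{0\}$ we therefore get $\Lambda_\eta$. When $I\ne\{0\}$ the value $a+b$ appears and dominates every $\lambda_+$, giving $L_1+\eta^2L_2$. Dividing the two bounds yields $\kappa(M_\eta)\le\Lambda_\eta/\delta_\eta$.

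\textbf{Optimal balance.} Here I expect the main obstacle, namely the algebra. Set $s=\eta^2$ and use $\lambda_+\lambda_-=ab\sin^2\theta$, so that
\[
\delta_\eta=\frac{\mu_1 s\mu_2(1-c^2)}{\lambda_+(\mu_1,s\mu_2)} .
\]
The envelope then becomes
\[
\frac{\Lambda_\eta\,\lambda_+(\mu_1,s\mu_2)}{s\mu_1\mu_2(1-c^2)} .
\]
Write $\lambda_+(a,b)=\sqrt{ab}\,g(\sqrt{a/b})$ with $g(r)=\tfrac12\bigl(r+r^{-1}+\sqrt{(r-r^{-1})^2+4c^2}\bigr)$. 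The function $g$ is symmetric under $r\mapsto1/r$ and increasing in $|\log r|$.

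The objective becomes, up to a factor independent of $s$, $g(r_L)\,g(r_\mu)$ with $r_L=\sqrt{L_1/(sL_2)}$ and $r_\mu=\sqrt{\mu_1/(s\mu_2)}$. In $\log s$ these ratios shift together. Thus we minimize $h(x-\ell)+h(x-m)$ for $h=\log g(e^{\cdot/2})$, which is even and convex. Convexity of $h$ I would check from the explicit form, or else argue via monotonicity on each side of the midpoint. By symmetry and convexity the minimizer is the midpoint $\log s=\tfrac12(\log(L_1/L_2)+\log(\mu_1/\mu_2))$, i.e.\ \eqref{eq:optimal-scalar-balance}.

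\textbf{Special cases.} For $c=0$ we have $g(r)=\max\{r,1/r\}$. At $\eta_\star$, $r_L r_\mu^{-1}$ and its inverse give the product
\[
\max\{r_L,r_L^{-1}\}\max\{r_\mu,r_\mu^{-1}\}=\max\{\kappa(M_1),\kappa(M_2)\}\ \text{or smaller},
\]
after direct substitution, since $r_Lr_\mu=\sqrt{\kappa(M_1)}$ and $r_L/r_\mu=\sqrt{\kappa(M_2)}$ up to inversion. This proves \eqref{eq:orthogonal-scalar-balance}.

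Finally, under exact whitening with $I=\{0\}$, $G_1=P_U+P_V$ exactly. Lemma~\ref{lem:weighted-projectors} with $a=b=1$ gives planes with eigenvalues $1\pm\cos\theta$ and exclusive directions with eigenvalue $1$. The extremes are $1+c$ and $1-c$, both attained at the Friedrichs plane. (If $c=0$ the ratio is $1$, still equal to $(1+c)/(1-c)$.)
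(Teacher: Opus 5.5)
Your proposal follows the paper's proof essentially step for step. It uses the same projector sandwich read through Lemma~\ref{lem:weighted-projectors} and the same identity $\lambda_+\lambda_-=ab(1-c^2)$. Your $g(e^t)$ is exactly the paper's $G_c(t)=\cosh t+\sqrt{\sinh^2t+c^2}$, and the balance $r_Lr_\mu=1$ is the paper's $t_L+t_\mu=0$. Two steps are not yet right as written.

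\textbf{Convexity is the crux, and the fallback fails.} Convexity of $h$ is the entire content of the balancing claim, so it cannot be left as ``I would check''. Your alternative, monotonicity on each side of the midpoint, does not follow from evenness and monotonicity of $h$. Those properties only confine minimizers to the segment between the two shift points.

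Let $D$ be half the length of that segment. The objective at offset $y$ from the midpoint is $h(D+y)+h(D-y)$. For an even $h$ increasing but concave in $|x|$, such as $h(x)=\sqrt{|x|}$, the endpoint beats the midpoint, since $h(0)+h(2D)=\sqrt{2D}<2\sqrt D$. Showing the objective is monotone on each side of the midpoint amounts to $h'$ being increasing on $[0,\infty)$, i.e.\ convexity. The paper verifies this directly: $(\log G_c)'(t)=\sinh t/\sqrt{\sinh^2t+c^2}$ and $(\log G_c)''(t)=c^2\cosh t/(\sinh^2t+c^2)^{3/2}\ge0$. For $c=0$ it uses $\log G_0(t)=|t|$.

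\textbf{The $c=0$ substitution uses wrong identities and drops the prefactor.} At $s_\star=\eta_\star^2$ one has $r_Lr_\mu=1$, which is precisely the balance condition. The ratio $r_L/r_\mu=\sqrt{\kappa(M_1)/\kappa(M_2)}$ holds for every $s$. Hence
$g(r_L)g(r_\mu)=\max\{r_L,r_L^{-1}\}^2=\max\{\sqrt{\kappa(M_1)/\kappa(M_2)},\sqrt{\kappa(M_2)/\kappa(M_1)}\}$,
which is not $\max\{\kappa(M_1),\kappa(M_2)\}$. You must restore the $s$-independent factor $\sqrt{L_1L_2/(\mu_1\mu_2)}=\sqrt{\kappa(M_1)\kappa(M_2)}$ that you discarded; with $1-c^2=1$, the product is then exactly $\max\{\kappa(M_1),\kappa(M_2)\}$.

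The paper's route here is shorter. Assume $\kappa(M_1)\ge\kappa(M_2)$ and check that $\eta_\star^2L_2\le L_1$ and $\eta_\star^2\mu_2\ge\mu_1$. Then for $c=0$, where $\Lambda_\eta=\max\{L_1,\eta^2L_2\}$ and $\delta_\eta=\min\{\mu_1,\eta^2\mu_2\}$, one gets $\Lambda_{\eta_\star}/\delta_{\eta_\star}=L_1/\mu_1=\kappa(M_1)$.
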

\begin{corollary}[Removing exact overlap]
\label{cor:angle-remove-overlap}
Let $I=U\cap V$, $U_0=U\cap I^\perp$, and $V_0=V\cap I^\perp$. Exact common directions contribute the separate benign eigenvalue $a+b$ to $aP_U+bP_V$ and may be consistently deduplicated by Lemma~\ref{lem:affine-dedup}. The cross-channel conditioning is therefore governed by the reduced pair $(U_0,V_0)$, for which $U_0\cap V_0=\{0\}$. The scalar-balancing and exact-whitening conclusions of Theorem~\ref{thm:angle} apply to that reduced stack; the unreduced stack retains the common-direction eigenvalue.
\end{corollary}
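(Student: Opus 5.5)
The plan is to reduce the corollary to the orthogonal decomposition of $U+V$ already used in the proof of Lemma~\ref{lem:weighted-projectors}, then apply Lemma~\ref{lem:affine-dedup} and Theorem~\ref{thm:angle} to the reduced pair. Throughout, $P_I$ denotes the projector onto $I$.

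\emph{Step 1: the common block.} Because $I=U\cap V$ and $U_0=U\cap I^\perp$, we have $U=I\oplus U_0$ orthogonally, so $P_U=P_I+P_{U_0}$. Likewise $V=I\oplus V_0$ gives $P_V=P_I+P_{V_0}$. Hence
\[
aP_U+bP_V=(a+b)P_I+\bigl(aP_{U_0}+bP_{V_0}\bigr).
\]
Both $U_0$ and $V_0$ lie in $I^\perp$, so the second summand vanishes on $I$ and maps $I^\perp$ into itself. The operator is therefore block diagonal with respect to $I\oplus I^\perp$. On $I$ it equals $(a+b)I$, which gives the benign eigenvalue $a+b$. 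The spectrum on $I^\perp$ is exactly that of $aP_{U_0}+bP_{V_0}$.

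\emph{Step 2: the reduced pair and deduplication.} First check that $U_0\cap V_0\subseteq U\cap V\cap I^\perp=I\cap I^\perp=\{0\}$. Next, by the definition \eqref{eq:friedrichs}, the supremum defining $c_F(U,V)$ ranges over unit vectors of $U\cap I^\perp=U_0$ and $V\cap I^\perp=V_0$. The same supremum computes $c_F(U_0,V_0)$, because that pair has trivial intersection. So $c_F(U_0,V_0)=c_F(U,V)$. For consistent affine data, Lemma~\ref{lem:affine-dedup} gives a left map $R_2$ such that replacing $M_2$ by $R_2M_2=Q_2^\top$ preserves $\mathcal F$. The second channel then has row space $V_0$. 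If one also wants the first channel reduced, the same argument applied symmetrically lets the common rows be kept in channel~1 only. The reduced system $\{Q_1^\top,Q_2^\top\}$ together with the single copy $Q_I^\top$ then describes the same feasible set.

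\emph{Step 3: applying Theorem~\ref{thm:angle}.} Apply Theorem~\ref{thm:angle} to the reduced stack with row spaces $U_0$ and $V_0$. The irredundant hypothesis $I=\{0\}$ holds by Step~2, so the scalar-balancing formula \eqref{eq:optimal-scalar-balance} and the exact-whitening identity $\kappa=(1+c)/(1-c)$ both apply, with $c=c_F$.

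\emph{Step 4: the unreduced stack.} Step~1 shows that the unreduced stack has spectrum equal to $\{a+b\}$ together with the spectrum of the reduced operator. The common-direction eigenvalue is retained, which is consistent with \eqref{eq:two-vs-multi-factor}.

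The only delicate point is the claim in Step~1 that $aP_{U_0}+bP_{V_0}$ acts trivially on $I$ and preserves $I^\perp$. This follows because the ranges of $P_{U_0}$ and $P_{V_0}$ lie in $I^\perp$ and both projectors are self-adjoint. Everything else is bookkeeping.
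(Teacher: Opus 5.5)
Your argument is correct and matches the paper's own justification. The paper reads off the eigenvalue $a+b$ on $I$ from the principal-angle decomposition in Lemma~\ref{lem:weighted-projectors}; your splitting $P_U=P_I+P_{U_0}$, $P_V=P_I+P_{V_0}$ states the same fact explicitly. It then invokes Lemma~\ref{lem:affine-dedup} and the irredundant branch of Theorem~\ref{thm:angle}.

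One bookkeeping correction: the deduplicated system $[M_1;R_2M_2]$ from that lemma has row spaces $(U,V_0)$, not $(U_0,V_0)$. Because $I\perp V_0$, this pair is also irredundant, and $c_F(U,V_0)=c_F(U_0,V_0)=c_F(U,V)$. So Theorem~\ref{thm:angle} applies directly to that actual affine stack. Your sentence about also reducing channel~1 is therefore unnecessary; as written it is also inconsistent, since the single copy of $Q_I^\top$ must stay somewhere.
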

\subsection{Proof of Theorem~\ref{thm:angle}}
\paragraph{Common directions in the unreduced stack.}
Lemma~\ref{lem:weighted-projectors} shows that an exact common direction contributes weight $a+b$ to $aP_U+bP_V$. Hence the unreduced stack cannot use the irredundant $1+c_F$ upper edge: common directions remain genuine eigen-directions until the affine system itself is deduplicated by Lemma~\ref{lem:affine-dedup}. We therefore bound the actual stack on $S=U+V$ and invoke irredundancy only for the scalar-balancing and exact-whitening identities below.

Work on $S=U+V$, the range of $M_\eta^\top$, without discarding $I=U\cap V$.
Let $P_U,P_V$ be the orthogonal projectors onto $U,V$. Since $\Ker(M_1)=U^\perp$,
$\Ker(M_2)=V^\perp$, and the nonzero eigenvalues of $M_j^\top M_j$ lie in $[\mu_j,L_j]$,
\begin{equation}
\mu_1P_U\preceq M_1^\top M_1\preceq L_1P_U,
\qquad
\mu_2P_V\preceq M_2^\top M_2\preceq L_2P_V.
\label{eq:projectororder-v3}
\end{equation}
Consequently
\[
\mu_1P_U+\eta^2\mu_2P_V\preceq M_\eta^\top M_\eta
\preceq L_1P_U+\eta^2L_2P_V.
\]
If $I=\{0\}$, Lemma~\ref{lem:weighted-projectors} bounds the largest eigenvalue
of the right-hand side by the first branch of \eqref{eq:maxbound}. Exclusive
one-dimensional components have weights $L_1$ or $\eta^2L_2$, no larger than
that envelope. If $I\ne\{0\}$, the common component has eigenvalue
$L_1+\eta^2L_2$ and attains the projector sum's upper bound, proving the second branch.
For the lower bound, the principal-plane values are at least $\delta_\eta$;
the exclusive weights $\mu_1,\eta^2\mu_2$ and the common weight
$\mu_1+\eta^2\mu_2$ are also at least $\delta_\eta$.
Both comparison operators are positive definite on $S$, so the Rayleigh bounds
prove \eqref{eq:minbound}, including cases without a nontrivial principal plane.

For the scalar-balance and exact-whitening claims, assume from now on that $I=\{0\}$.
It remains to optimize the ratio $\Lambda_\eta/\delta_\eta$. For $a,b>0$ write
\[
\lambda_+(a,b;c)
:=\frac{a+b+\sqrt{(a-b)^2+4abc^2}}2.
\]
Lemma~\ref{lem:weighted-projectors} also gives
$\lambda_+(a,b;c)\lambda_-(a,b;c)=ab(1-c^2)$. Set
\[
G_c(t):=\cosh t+\sqrt{\sinh^2t+c^2}.
\]
Then
\[
\lambda_+(a,b;c)=\sqrt{ab}\,
G_c\!\left(\frac12\log\frac ba\right),
\qquad
\lambda_-(a,b;c)=\frac{\sqrt{ab}(1-c^2)}
{G_c(\frac12\log(b/a))}.
\]
Consequently, with $\vartheta:=\eta^2$,
\begin{equation}
\frac{\Lambda_\eta}{\delta_\eta}
=\frac{\sqrt{\kappa(M_1)\kappa(M_2)}}{1-c^2}
G_c(t_L)G_c(t_\mu),
\quad
t_L:=\frac12\log\frac{\vartheta L_2}{L_1},\quad
t_\mu:=\frac12\log\frac{\vartheta\mu_2}{\mu_1}.
\label{eq:scalar-balance-hyperbolic}
\end{equation}
The difference $t_L-t_\mu$ is independent of $\vartheta$. Moreover $\log G_c$ is even and convex: for $c>0$,
\[
\frac{d}{dt}\log G_c(t)=\frac{\sinh t}{\sqrt{\sinh^2t+c^2}},
\qquad
\frac{d^2}{dt^2}\log G_c(t)=
\frac{c^2\cosh t}{(\sinh^2t+c^2)^{3/2}}\ge0,
\]
and for $c=0$ the limiting function is $\log G_0(t)=|t|$. Hence the product in~\eqref{eq:scalar-balance-hyperbolic} is minimized when $t_L+t_\mu=0$, which is equivalent to
$\vartheta^2=\mu_1L_1/(\mu_2L_2)$ and proves~\eqref{eq:optimal-scalar-balance}; write $\vartheta_\star:=\eta_\star^2$ for this minimizer.

If $c=0$, the projector eigenvalues are simply the two weights. Suppose without loss of generality $\kappa(M_1)\ge\kappa(M_2)$. At $\vartheta=\vartheta_\star$, one has $\vartheta L_2\le L_1$ and $\vartheta\mu_2\ge\mu_1$, so
$\Lambda_{\eta_\star}=L_1$ and $\delta_{\eta_\star}=\mu_1$, proving~\eqref{eq:orthogonal-scalar-balance}. Finally, in the exactly whitened case $M^\top M=P_U+P_V$; Lemma~\ref{lem:weighted-projectors} gives extremal eigenvalues $1-c$ and $1+c$, which yields~\eqref{eq:exactcondition}. \qed

\subsection{Canonical core-model heterogeneity characterization}
\begin{proposition}[Exact heterogeneity characterization of the canonical cross angle]
\label{prop:heterogeneity}
Consider the canonical core model $C_i=D_i=0$ and assume
$\Range(K_1^\top)\cap\Range(K_2^\top)=\{0\}$. Let
\begin{equation}
H_0:=\sum_{i=1}^n(A_iA_i^\top+B_iB_i^\top),\qquad
H_{\rm het}:=\sum_{i=1}^n(B_i-\bar B)(B_i-\bar B)^\top,
\qquad \bar B:=\frac1n\sum_{i=1}^nB_i.
\label{eq:heterogeneity-grams}
\end{equation}
Then the canonical Friedrichs cosine is exactly
\begin{equation}
c_F^2=
\sup_{h:\,h^\top H_0h>0}
\frac{h^\top H_{\rm het}h}{h^\top H_0h}
=
\lambda_{\max}\!\left(H_0^{\dagger/2}H_{\rm het}H_0^{\dagger/2}\right),
\label{eq:heterogeneity}
\end{equation}
where the eigenvalue is taken on $\Range(H_0)$. Moreover
$0\preceq H_{\rm het}\preceq H_0$, so the spectrum in~\eqref{eq:heterogeneity} lies in $[0,1)$ under irredundancy. Thus $\chix$ is computable from an $m\times m$ generalized eigenvalue problem; in particular, $c_F\to1$ iff the largest heterogeneity Rayleigh quotient tends to one.
\end{proposition}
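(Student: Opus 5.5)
The plan is to compute the two canonical row spaces explicitly in the core model and reduce the Friedrichs cosine to a Rayleigh quotient over $h\in\R^m$. With $C=D=0$ we have $K_1=[E_mA\ \ E_mB]$ and $K_2=[0\ \ \Wp_q]$ (the $D$ rows are absent). Hence
\[
U=\Range(K_1^\top)=\{u(h):=n^{-1/2}\col(A_1^\top h,\ldots,A_n^\top h,\;B_1^\top h,\ldots,B_n^\top h):h\in\R^m\},
\]
since $E_m^\top h=n^{-1/2}\col(h,\ldots,h)$. By Lemma~\ref{lem:network-range},
\[
V=\Range(K_2^\top)=\{0\}\times\{\bm z:\textstyle\sum_i z_i=0\}.
\]
Because $U\cap V=\{0\}$ by assumption, the definition \eqref{eq:friedrichs} involves no deduplication. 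I will use the standard identity
\[
\sup_{u\in U,\,v\in V,\ \|u\|=\|v\|=1}|\langle u,v\rangle|=\sup_{u\in U\setminus\{0\}}\frac{\|P_Vu\|}{\|u\|},
\]
which holds by Cauchy--Schwarz with the choice $v=P_Vu/\|P_Vu\|$.

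\textbf{Computing the two norms.} The projector $P_V$ zeroes the $\bm x$ block and removes the block mean of the $\bm z$ block. The $\bm z$ block of $u(h)$ is $n^{-1/2}(B_i^\top h)_i$, whose mean is $n^{-1/2}\bar B^\top h$. Therefore
\[
\|P_Vu(h)\|^2=\frac1n\sum_i\|(B_i-\bar B)^\top h\|^2=\frac1n\,h^\top H_{\rm het}h,
\qquad
\|u(h)\|^2=\frac1n\,h^\top H_0h.
\]
Moreover $u(h)=0$ exactly when $h^\top H_0h=0$. Substituting into the identity above gives the Rayleigh-quotient form of \eqref{eq:heterogeneity}. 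Restricting to $h\in\Range(H_0)$ (components in $\Ker H_0$ lie in $\Ker H_{\rm het}$ too, see below) and substituting $h=H_0^{\dagger/2}g$ turns the quotient into the $\lambda_{\max}$ form.

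\textbf{Ordering and strictness.} Expanding gives
\[
H_{\rm het}=\sum_iB_iB_i^\top-n\bar B\bar B^\top\preceq\sum_iB_iB_i^\top\preceq H_0 ,
\]
and $H_{\rm het}\succeq0$ is immediate. In particular $\Ker H_0\subseteq\Ker H_{\rm het}$, which justifies the restriction to $\Range(H_0)$. This ordering puts the spectrum in $[0,1]$. For the strict bound, the supremum is over a finite-dimensional unit sphere of $\Range(H_0)$, so it is attained. If it equaled $1$, some nonzero $u\in U$ would satisfy $\|P_Vu\|=\|u\|$, so $u\in U\cap V=\{0\}$, a contradiction. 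Hence $c_F<1$. The final claims (the $m\times m$ generalized eigenproblem, and $c_F\to1$ iff the top quotient tends to $1$) then follow immediately from \eqref{eq:heterogeneity}.

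\textbf{Main obstacle.} The only delicate step is the projection computation: one must correctly identify $V$ as the zero-block-sum subspace via Lemma~\ref{lem:network-range}, and then handle the degenerate directions with $h\in\Ker H_0$ so that the pseudo-inverse formulation is well-defined.
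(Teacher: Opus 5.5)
Your proposal is correct and follows essentially the same route as the paper: parametrize $\Range(K_1^\top)$ by $h\in\R^m$, identify $V$ as the zero-block-sum subspace, write the cosine as $\sup_u\|P_Vu\|/\|u\|$, subtract the block mean, and use the ordering $H_{\rm het}=\sum_iB_iB_i^\top-n\bar B\bar B^\top\preceq H_0$ to reach the pseudo-inverse form. Your compactness argument for $c_F<1$ spells out what the paper compresses into ``irredundancy excludes eigenvalue one.''
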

\subsection{Proof of Proposition~\ref{prop:heterogeneity}}
For $C=D=0$, the canonical channels in~\eqref{eq:channels} are
$K_1=[E_mA\;E_mB]$ and $K_2=[0\;\Wp_q]$. Hence
\[
V:=\Range(K_2^\top)=\{0\}\times\Range(\Wp_q),
\]
and, because $K_1$ has only the coupled rows, every vector in
$U:=\Range(K_1^\top)$ is of the form
\[
\frac1{\sqrt n}g(h),\qquad
g(h):=\col\bigl(A_i^\top h,B_i^\top h\bigr)_{i=1}^n,
\qquad h\in\R^m.
\]
Thus the parameterization by $h$ is exhaustive, not merely a restricted family of row-space directions. Under $U\cap V=\{0\}$,
\[
c_F=\sup_{u\in U\setminus\{0\}}\frac{\|P_Vu\|}{\|u\|}.
\]
The projector onto $\Range(\Wp_q)$ subtracts the block mean, so
\[
P_Vg(h)=\col\bigl(0,(B_i-\bar B)^\top h\bigr)_{i=1}^n.
\]
Consequently
\[
\frac{\|P_Vg(h)\|^2}{\|g(h)\|^2}
=
\frac{\sum_i\|(B_i-\bar B)^\top h\|^2}
{\sum_i(\|A_i^\top h\|^2+\|B_i^\top h\|^2)}
=
\frac{h^\top H_{\rm het}h}{h^\top H_0h}.
\]
Taking the supremum proves the first equality in~\eqref{eq:heterogeneity}. Also
\[
H_{\rm het}=\sum_iB_iB_i^\top-n\bar B\bar B^\top
\preceq\sum_iB_iB_i^\top\preceq H_0,
\]
which implies $\Range(H_{\rm het})\subseteq\Range(H_0)$ and converts the Rayleigh quotient into the pseudoinverse generalized-eigenvalue formula in~\eqref{eq:heterogeneity}. Irredundancy excludes eigenvalue one. \qed

\section{Sharpness and Optimal Separate Preconditioning}
\begin{proposition}[Sharp two-dimensional obstruction]\label{prop:2d}
For every $\theta\in(0,\pi/2]$ there are rank-one $M_1,M_2:\R^2\to\R$ with $\kappa(M_1)=\kappa(M_2)=1$ but $\kappa([M_1;M_2])=\cot^2(\theta/2)$.
\end{proposition}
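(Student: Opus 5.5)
The construction is explicit. Take unit row vectors in $\R^2$ at angle $\theta$:
\[
M_1=[1\ \ 0],\qquad M_2=[\cos\theta\ \ \sin\theta].
\]
Each $M_j$ is rank one with a single nonzero singular value equal to $1$. Hence $\kappa(M_1)=\kappa(M_2)=1$, and $M_j^\top M_j=P_{U_j}$ is the orthogonal projector onto the line spanned by the row. So the pair is already exactly whitened, and no within-channel preprocessing has anything left to do.

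Next I compute the joint normal operator. We have $[M_1;M_2]^\top[M_1;M_2]=P_U+P_V$, which is precisely the matrix of Lemma~\ref{lem:weighted-projectors} with $a=b=1$. Its trace is $2$ and its determinant is $\sin^2\theta$, so the eigenvalues are $1\pm\cos\theta$.

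For $\theta\in(0,\pi/2]$ the two lines are distinct, so $U\cap V=\{0\}$. Therefore the Friedrichs cosine is $c_F=\cos\theta$. Both eigenvalues are positive, so the full $\R^2$ is the range and no zero eigenvalue needs to be discarded. This gives
\[
\kappa([M_1;M_2])=\frac{1+\cos\theta}{1-\cos\theta}.
\]

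The last step is the half-angle identities $1+\cos\theta=2\cos^2(\theta/2)$ and $1-\cos\theta=2\sin^2(\theta/2)$. These turn the ratio into $\cot^2(\theta/2)$, which is the claimed value.

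The endpoint $\theta=\pi/2$ is consistent: the rows are orthogonal, the eigenvalues are both $1$, and the value is $1=\cot^2(\pi/4)$. This matches Corollary~\ref{cor:old-orthogonal}. Since $\cot^2(\theta/2)\to\infty$ as $\theta\downarrow0$, the sharpness claim also follows: the joint conditioning is unbounded even though each channel is perfectly conditioned.

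None of the steps is a real obstacle. The only point that needs care is to state explicitly that $\theta>0$. This ensures the intersection is trivial, so the exact-overlap eigenvalue $2$ from Lemma~\ref{lem:weighted-projectors} does not appear, and the formula is the irredundant one from Theorem~\ref{thm:angle}.
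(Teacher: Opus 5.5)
Your proposal is correct and matches the paper's proof essentially verbatim. It uses the same pair $M_1=[1\ \ 0]$, $M_2=[\cos\theta\ \ \sin\theta]$, applies Lemma~\ref{lem:weighted-projectors} with $a=b=1$ to get the joint Gram eigenvalues $1\pm\cos\theta$, and converts the ratio to $\cot^2(\theta/2)$, with your explicit trace--determinant and half-angle steps filling in details the paper leaves implicit.
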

\subsection{Proof of Proposition~\ref{prop:2d}}
Take
\[
M_1=\begin{bmatrix}1&0\end{bmatrix},
\qquad
M_2=\begin{bmatrix}\cos\theta&\sin\theta\end{bmatrix}.
\]
Both rows have unit norm, so each channel has condition number one and their row-space angle is $\theta$. Lemma~\ref{lem:weighted-projectors} with $a=b=1$ gives joint Gram eigenvalues $1\pm\cos\theta$, hence
\[
\kappa([M_1;M_2])
=\frac{1+\cos\theta}{1-\cos\theta}
=\cot^2(\theta/2).
\]
In particular, $\kappa([M_1;M_2])\sim4/\theta^2$ as $\theta\downarrow0$. \qed

\begin{theorem}[Optimality of the cross-angle barrier under arbitrary channelwise preconditioning]
\label{thm:blockwise-opt}
Let $U=\Range(M_1^\top)$ and $V=\Range(M_2^\top)$ be irredundant, $U\cap V=\{0\}$, and let $S_j$ range over all linear maps that are injective on $\Range(M_j)$. Then
\begin{equation}
\inf_{S_1,S_2}
\kappa\!\left(\begin{bmatrix}S_1M_1\\S_2M_2\end{bmatrix}\right)
=\frac{1+c_F(U,V)}{1-c_F(U,V)}=\chix^2.
\label{eq:blockwise-opt}
\end{equation}
The infimum is attained by exact separate whitening,

$S_j=(M_jM_j^\top)^{\dagger/2}$ on $\Range(M_j)$, where $\dagger$ denotes the Moore--Penrose pseudoinverse.
Thus no invertible linear processing performed separately inside the two affine channels can improve the intrinsic cross-angle condition number.
\end{theorem}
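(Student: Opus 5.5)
The argument has two halves: attainment of the value by exact separate whitening, and a matching lower bound valid for every admissible pair $(S_1,S_2)$. Attainment is immediate from earlier results. With $S_j=(M_jM_j^\top)^{\dagger/2}$ on $\Range(M_j)$ we get $\widehat M_j^\top\widehat M_j=M_j^\top(M_jM_j^\top)^\dagger M_j=P_{U_j}$. The last claim of Theorem~\ref{thm:angle}, using $I=\{0\}$, then gives $\kappa=(1+c_F)/(1-c_F)$. So the infimum is at most $\chix^2$, and all remaining work is the lower bound.

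For the lower bound, fix $S_1,S_2$ injective on $\Range(M_j)$ and set $N_j=S_jM_j$. Injectivity gives $\Range(N_j^\top)=\Range(M_j^\top S_j^\top)=U_j$: we have $\Ker N_j=\Ker M_j$, hence equal row spaces. The joint Gram matrix is then $G=N_1^\top N_1+N_2^\top N_2$, a sum of PSD operators $G_1,G_2$ with ranges exactly $U$ and $V$. Its positive spectrum lives on $S=U+V$, which is a direct sum because $U\cap V=\{0\}$.

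Take a principal pair attaining the Friedrichs cosine: unit vectors $u\in U$ and $v\in V$ with $\langle u,v\rangle=c:=c_F$. Such a pair exists by finite dimensionality; for $c=0$ the bound $\kappa\ge1$ is trivial. I will test $G$ on the two vectors $w_\pm=\alpha u\pm\beta v$ for suitable scalings $\alpha,\beta>0$. Choose $\alpha=\beta^{-1}\sqrt{g_2/g_1}$ type balancing, where $g_1=u^\top G_1u$ and $g_2=v^\top G_2 v$, and put $a=u^\top G_1 u$ and $b=v^\top G_2 v$.

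The difficulty is that $G_1$ need not act as a multiple of the identity on $U$, so cross terms such as $v^\top G_1 v$ are uncontrolled. I would instead use a cleaner variational route.
\begin{itemize}
\item For the upper eigenvalue, $\lambda_{\max}(G)\ge$ the Rayleigh quotient of $w_+$ restricted to \emph{the operator} $G$.
\item For the lower eigenvalue, $\lambda_{\min}^+(G)$ is bounded above using a vector built from the dual construction.
\end{itemize}

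Concretely, I would pass to the $2$D compression. Let $e_1=u$ and $e_2=(v-cu)/\sqrt{1-c^2}$, and note that any PSD $G_1$ with range $U$ has $G_1\succeq0$, while $G_1 w=0$ fails only for $U$-components. This is still messy, so the main route I would actually commit to is a reduction to the projector case. Write $G_j=N_j^\top N_j$ with $\mu_jP_{U_j}\preceq G_j\preceq L_jP_{U_j}$, and argue that only the quantities along the chosen principal vectors matter.

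Define $\tilde u=G_1^{\dagger/2}$-free test vectors as follows. Take $x\in S$ with $G_1^{1/2}$-component aligned so that $\|N_1x\|^2=\|N_1(\text{component})\|^2$. Specifically, decompose $S=U\oplus V$ (non-orthogonally) via the oblique projection. The key identity I expect is
\[
\lambda_{\min}^+(G)\le\frac{\|N_1x\|^2+\|N_2x\|^2}{\|x\|^2}
\]
for $x=s\,P_U^{\perp_V}$-type vectors that are killed by one channel. Choose $x\in V^{\perp}\cap S$ near $u$ and $y\in U^\perp\cap S$ near $v$. Then $N_2x=0$ and $N_1y=0$, so the Rayleigh quotients decouple into single-channel quantities, and the geometry enters only through $\|x\|$ versus $\|P_Ux\|$, i.e.\ factors $\sin\theta_F$.

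Combining the two decoupled Rayleigh quotients with the upper test vectors $w_\pm$ should yield
\[
\frac{\lambda_{\max}}{\lambda_{\min}^+}\ge\frac{(\sqrt{a}+\sqrt{b})^2\text{-type}}{\dots}\ \ge\ \frac{1+c}{1-c}
\]
after optimizing over the free scalings, mirroring the hyperbolic computation \eqref{eq:scalar-balance-hyperbolic} in the proof of Theorem~\ref{thm:angle}. \textbf{The main obstacle} is this last step: producing test vectors whose Rayleigh quotients involve only the same two numbers $a,b$, so that the cross terms cancel and the ratio reduces exactly to $(1+c)/(1-c)$ by an AM--GM argument in $a/b$. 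If that fails, the fallback is to show the minimum ratio over the 2D compression equals the $2\times2$ computation of Lemma~\ref{lem:weighted-projectors} with weights $a,b$, whose condition number is minimized at $a=b$ with value $(1+c)/(1-c)$.
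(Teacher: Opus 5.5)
Your attainment half is fine, but the lower bound is never proved, and it is the entire content of the theorem. The argument stops at ``should yield $\ldots$ $(\sqrt a+\sqrt b)^2$-type'', which you yourself flag as the main obstacle. Worse, the route you commit to cannot reach the sharp constant, because test vectors killed by one channel lose a factor $1+c$. Take the exactly whitened case $G_1=P_U$, $G_2=P_V$. Any nonzero $x\in S\cap V^\perp$ can be written $x=P_{V^\perp}u'$ with $u'\in U$. Then $\langle x,u'\rangle=\|x\|^2$, so $\|P_Ux\|\ge\|x\|^2/\|u'\|$ and $x^\top Gx=\|P_Ux\|^2\ge(1-\|P_Vu'\|^2/\|u'\|^2)\|x\|^2\ge(1-c^2)\|x\|^2$. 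The same holds on $S\cap U^\perp$, while $\lambda_{\max}(G)=1+c$. Hence decoupled Rayleigh quotients can certify at most $(1+c)/(1-c^2)=1/(1-c)$, which is strictly below $(1+c)/(1-c)$. The extremal direction $u-v$ is seen by both channels, and that is exactly where the cross terms live.

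The missing idea is the fact you dismissed as ``uncontrolled cross terms''. For a Friedrichs pair with $c>0$, maximality of $\langle u,v\rangle$ forces $u=P_Uv/\|P_Uv\|$, that is, $P_Uv=cu$, and likewise $P_Vu=cv$. Since $G_1=P_UG_1P_U$, every $y\in\Pi:=\operatorname{span}\{u,v\}$ satisfies $y^\top G_1y=a\langle y,u\rangle^2$ and $y^\top G_2y=b\langle y,v\rangle^2$; for instance $v^\top G_1v=c^2a$. So the compression of $G$ to $\Pi$ is exactly $a\,uu^\top+b\,vv^\top$. Its eigenvalues are the $\lambda_\pm$ of Lemma~\ref{lem:weighted-projectors}, with $\lambda_+\lambda_-=ab(1-c^2)$ and $\lambda_+\ge\sqrt{ab}(1+c)$, hence $\lambda_+/\lambda_-\ge(1+c)/(1-c)$. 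Because $\Pi\subset S$ and $G$ is positive definite on $S$, the Rayleigh principle gives $\lambda_{\max}(G)\ge\lambda_+$ and $\lambda_{\min}^+(G)\le\lambda_-$; your fallback also omits this step. With these two facts your fallback becomes a complete proof.

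The paper's argument avoids the plane structure altogether. Writing $G=Q_UH_UQ_U^\top+Q_VH_VQ_V^\top$, with $[Q_U\ Q_V]$ an isomorphism onto $S$, shows $u^\top G^{-1}v=0$ for all $u\in U$, $v\in V$. Hence $(u+v)^\top G^{-1}(u+v)=(u-v)^\top G^{-1}(u-v)$. Rayleigh bounds on $G^{-1}$ then give $\overline g(1-c)\ge\underline g(1+c)$ using only $\langle u,v\rangle=c$.
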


\subsection{Proof of Theorem~\ref{thm:blockwise-opt}}
Write $\widetilde M_j=S_jM_j$ and
\[
\mathsf G_1:=\widetilde M_1^\top\widetilde M_1,
\qquad
\mathsf G_2:=\widetilde M_2^\top\widetilde M_2,
\qquad
\mathsf G:=\mathsf G_1+\mathsf G_2.
\]
Injectivity of $S_j$ on $\Range(M_j)$ preserves the row spaces, so $\mathsf G_1$ is positive definite on $U$ and vanishes on $U^\perp$, while $\mathsf G_2$ is positive definite on $V$ and vanishes on $V^\perp$. Since $U\cap V=\{0\}$, $\mathsf G$ is positive definite on $\mathcal S:=U+V$.

Let $Q_U,Q_V$ be orthonormal basis matrices for $U,V$ and set $Q_{UV}:=[Q_U\ Q_V]$.  The map $Q_{UV}:\R^{\dim U}\oplus\R^{\dim V}\to\mathcal S$ is an isomorphism. For some positive definite matrices $H_U,H_V$,
\[
\mathsf G=Q_{UV}\begin{bmatrix}H_U&0\\0&H_V\end{bmatrix}Q_{UV}^\top
\quad\text{on }\mathcal S.
\]
Hence
\[
\mathsf G^{-1}=Q_{UV}^{-\top}\begin{bmatrix}H_U^{-1}&0\\0&H_V^{-1}\end{bmatrix}Q_{UV}^{-1},
\]
and therefore every $u\in U$ and $v\in V$ are orthogonal in the $\mathsf G^{-1}$-metric:
\begin{equation}
u^\top \mathsf G^{-1}v=0.
\label{eq:ginv-orthogonality}
\end{equation}

Choose a Friedrichs principal pair of Euclidean unit vectors $u\in U$, $v\in V$, with $u^\top v=c:=c_F(U,V)$ (flip the sign of $v$ if needed). Let $0<\underline g\le\overline g$ be the smallest and largest eigenvalues of $\mathsf G^{-1}$ on $\mathcal S$. By \eqref{eq:ginv-orthogonality},
\[
(u+v)^\top \mathsf G^{-1}(u+v)=(u-v)^\top \mathsf G^{-1}(u-v).
\]
Using the Rayleigh bounds on the two sides gives
\[
2\underline g(1+c)
\le (u+v)^\top \mathsf G^{-1}(u+v)
=(u-v)^\top \mathsf G^{-1}(u-v)
\le2\overline g(1-c).
\]
Thus
\[
\kappa_+(\mathsf G)=\kappa_+(\mathsf G^{-1}|_{\mathcal S})=\frac{\overline g}{\underline g}\ge\frac{1+c}{1-c}=\chix^2.
\]
Since $\mathsf G$ is the Gram matrix of the stacked preconditioned operator, this proves the lower bound in \eqref{eq:blockwise-opt}. Exact separate whitening gives $\mathsf G_1=P_U$ and $\mathsf G_2=P_V$; the exact-whitening part of Theorem~\ref{thm:angle} then gives equality. \qed

\section{Generic Separate Normalization}\label{app:normalization}
\begin{lemma}[Row-space invariance under left spectral preconditioning]\label{lem:rowspace-invariance}
If $S$ maps $\Range(M)$ bijectively onto itself, then $\Range((SM)^\top)=\Range(M^\top)$.
\end{lemma}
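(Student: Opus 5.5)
The plan is to prove the two inclusions $\Range((SM)^\top)\subseteq\Range(M^\top)$ and $\Range(M^\top)\subseteq\Range((SM)^\top)$ separately. We pass to kernels via $\Range(N^\top)=\Ker(N)^\perp$. This identity holds for every finite-dimensional linear map $N$, so it suffices to show $\Ker(SM)=\Ker(M)$. Taking orthogonal complements of this kernel equality then gives the claim immediately.

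For the kernel equality, the inclusion $\Ker(M)\subseteq\Ker(SM)$ is trivial: if $Mx=0$ then $SMx=S0=0$. For the reverse inclusion, take $x$ with $SMx=0$. The vector $Mx$ lies in $\Range(M)$, and $S$ restricted to $\Range(M)$ is injective by the bijectivity hypothesis. Since $S(Mx)=0=S(0)$ with both $Mx$ and $0$ in $\Range(M)$, injectivity forces $Mx=0$, so $x\in\Ker(M)$.

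The only point needing care is that the hypothesis is stated only on $\Range(M)$: $S$ may be singular, or not even square, on the ambient codomain. For instance, $S=(MM^\top)^{\dagger/2}$ annihilates $\Range(M)^\perp$. The argument above uses injectivity solely on vectors of the form $Mx$, so this restriction is harmless. Surjectivity onto $\Range(M)$ is not needed at all; injectivity on $\Range(M)$ is what the proof actually uses. This is also exactly what was invoked in the proof of Theorem~\ref{thm:blockwise-opt}, where the same row-space preservation was used.

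I do not expect a real obstacle here; the step to state carefully is the reduction $\Range(N^\top)=\Ker(N)^\perp$. It follows from $\langle N^\top y,x\rangle=\langle y,Nx\rangle$, which shows $\Range(N^\top)^\perp=\Ker(N)$, together with finite dimensionality, which makes $\Range(N^\top)$ closed.
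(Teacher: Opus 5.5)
Your proof is correct and is essentially the paper's argument: you show $\Ker(SM)=\Ker(M)$ using only injectivity of $S$ on $\Range(M)$, then take orthogonal complements via $\Range(N^\top)=\Ker(N)^\perp$. Your observation that surjectivity is never used also agrees with the paper's proof, which invokes only injectivity, and with the way Theorem~\ref{thm:blockwise-opt} applies the lemma to maps that are merely injective on $\Range(M_j)$.
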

\begin{lemma}[Residual condition after separate normalization]
\label{lem:joint-cheb}
Let $U=\Range(\widehat K_1^\top)$ and
$V=\Range(\widehat K_2^\top)$, with $U+V\ne\{0\}$.
Put $I=U\cap V$, and define $g_I=1$ if $I=\{0\}$ and
$g_I=2$ otherwise. Suppose $0<a\le b$ and
\[
aP_U\preceq\widehat K_1^\top\widehat K_1\preceq bP_U,
\qquad
aP_V\preceq\widehat K_2^\top\widehat K_2\preceq bP_V.
\]
Put $c=c_F(U,V)<1$, with the convention in
\eqref{eq:friedrichs}. Then
\begin{equation}
\frac ab\frac{1+c}{1-c}
\le\kappa([\widehat K_1;\widehat K_2])
\le g_I\frac ba\frac{1+c}{1-c}.
\label{eq:residual-condition}
\end{equation}
In all cases the upper bound $2b/[a(1-c)]$ also holds.
\end{lemma}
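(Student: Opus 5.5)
The plan is to sandwich $\widehat K^\top\widehat K$ between multiples of the projector sum $P_U+P_V$ and then read off its spectrum from Lemma~\ref{lem:weighted-projectors}. Summing the two hypotheses gives
\[
a(P_U+P_V)\preceq \widehat K^\top\widehat K\preceq b(P_U+P_V).
\]
All three operators vanish on $(U+V)^\perp$ and are positive definite on $S:=U+V$. This is because $\Ker(P_U+P_V)=U^\perp\cap V^\perp$. Rayleigh quotients restricted to $S$ then give
\[
a\lambda_{\min}^+\le\sigma_{\min}^+(\widehat K)^2,\qquad \sigma_{\max}(\widehat K)^2\le b\lambda_{\max},
\]
and symmetrically
\[
\sigma_{\max}(\widehat K)^2\ge a\lambda_{\max},\qquad \sigma_{\min}^+(\widehat K)^2\le b\lambda_{\min}^+,
\]
where $\lambda$ refers to $P_U+P_V$. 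Consequently
\[
\frac ab\,\kappa_+(P_U+P_V)\le\kappa(\widehat K)\le\frac ba\,\kappa_+(P_U+P_V).
\]
The problem therefore reduces to bounding $\kappa_+(P_U+P_V)$.

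Next I apply Lemma~\ref{lem:weighted-projectors} with $a=b=1$, decomposing $S$ orthogonally into three kinds of pieces. The common part $I$ carries eigenvalue $2$. Exclusive directions, which lie in exactly one of $U\cap I^\perp$, $V\cap I^\perp$, carry eigenvalue $1$. Each nontrivial principal-angle plane with angle $\theta$ carries eigenvalues $1\pm\cos\theta$, and the largest such cosine is exactly $c=c_F(U,V)$, since these planes live in the reduced pair $(U\cap I^\perp,V\cap I^\perp)$.

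When a plane attaining $c$ exists, the smallest positive eigenvalue is $1-c$. The largest eigenvalue is $1+c$ if $I=\{0\}$ and $2$ otherwise. This gives
\[
\frac{1+c}{1-c}\le\kappa_+(P_U+P_V)\le g_I\,\frac{1+c}{1-c},
\]
because $2\le 2(1+c)$. For the lower bound in the case $I\ne\{0\}$ I would use $2/(1-c)\ge(1+c)/(1-c)$. The second form of the upper bound, $2/(1-c)$, follows directly from $\lambda_{\max}\le2$ and $\lambda_{\min}^+\ge1-c$.

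The main obstacle is the degenerate bookkeeping needed to confirm $\lambda_{\min}^+=1-c$ and the lower bound $\ge(1+c)/(1-c)$ in every case. These cases are:
\begin{itemize}
\item when $c=0$, so there are no nontrivial planes;
\item when one reduced subspace is zero;
\item when the Friedrichs supremum is attained in the finite-dimensional principal-angle decomposition.
\end{itemize}
If $c=0$, every positive eigenvalue is $1$ or $2$, so the lower bound $1$ holds trivially and the upper bound $g_I$ holds as well. In finite dimensions the supremum in \eqref{eq:friedrichs} equals the largest principal cosine of the reduced pair. I will invoke this, together with the convention $c_F=0$ for a zero reduced subspace, and $c<1$ guarantees strict positivity on $S$.
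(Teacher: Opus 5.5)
Your proposal is correct and follows the paper's proof. Like the paper, you sum the two channel sandwiches to compare $\widehat K^\top\widehat K$ with $P_U+P_V$, read off eigenvalue $2$ on $I$, $1$ on exclusive directions and $1\pm\cos\theta$ on principal planes, and use a plane attaining $c$ (or $\kappa\ge 1\ge a/b$ when $c=0$) for the lower bound; the only cosmetic difference is that you first record $\frac ab\,\kappa_+(P_U+P_V)\le\kappa(\widehat K)\le\frac ba\,\kappa_+(P_U+P_V)$ and then compute $\kappa_+(P_U+P_V)$ exactly, whereas the paper bounds the extreme Rayleigh quotients directly.
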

\subsection{Proof of Lemma~\ref{lem:rowspace-invariance}}
Because $S$ is injective on $\Range(M)$,
\[
SMx=0 \iff Mx=0,
\]
so $\Ker(SM)=\Ker(M)$. In finite-dimensional Euclidean spaces, $\Range(M^\top)=\Ker(M)^\perp$. Therefore
\[
\Range((SM)^\top)=\Ker(SM)^\perp=\Ker(M)^\perp=\Range(M^\top).
\]
The same argument applies to any polynomial left preconditioner whose scalar polynomial is nonzero on the positive spectrum of $MM^\top$. Hence the Friedrichs angle is invariant. \qed

\subsection{Proof of Lemma~\ref{lem:joint-cheb}}
Set $P=P_U+P_V$ and
$\mathsf G=\widehat K_1^\top\widehat K_1+
\widehat K_2^\top\widehat K_2$.
Then $aP\preceq \mathsf G\preceq bP$ and
$\Ker \mathsf G=(U+V)^\perp$.
The principal-angle decomposition gives eigenvalue $2$ on
$I$, eigenvalue $1$ on exclusive directions, and
$1\pm\cos\theta$ on each nontrivial principal plane.
Consequently $\lambda_{\max}(P)\le2$ and
$\lambda_{\min}^+(P)\ge1-c$.
If $I=\{0\}$, the upper bound sharpens to
$\lambda_{\max}(P)\le1+c$.
These facts prove both asserted upper bounds.
If $c>0$, a principal plane attaining $c$ supplies unit vectors
with Rayleigh quotients $1+c$ and $1-c$; comparison with $\mathsf G$
gives the lower bound in \eqref{eq:residual-condition}.
If $c=0$, that bound follows from $\kappa\ge1\ge a/b$.
No common-direction eigenvalue is discarded from the full stack. \qed

\subsection{Constant-accuracy inexact channel solves}
\begin{corollary}[Inexact whitening preserves cross-optimality]
\label{cor:inexact-whitening}
Let $\widetilde K_1,\widetilde K_2$ have row spaces $U,V$,
with $U+V\ne\{0\}$, and let $I,g_I$ be as in
Lemma~\ref{lem:joint-cheb}. Define
$\chi_{U,V}:=\sqrt{(1+c_F(U,V))/(1-c_F(U,V))}$.
Suppose, for $0\le\delta_{\rm wh}<1$,
\[
(1-\delta_{\rm wh})P_U
\preceq\widetilde K_1^\top\widetilde K_1
\preceq(1+\delta_{\rm wh})P_U,
\qquad
(1-\delta_{\rm wh})P_V
\preceq\widetilde K_2^\top\widetilde K_2
\preceq(1+\delta_{\rm wh})P_V.
\]
Then
\begin{equation}
\chi_{U,V}^2
\le\kappa([\widetilde K_1;\widetilde K_2])
\le g_I\frac{1+\delta_{\rm wh}}{1-\delta_{\rm wh}}\chi_{U,V}^2.
\label{eq:inexact-whitening}
\end{equation}
Thus fixed whitening accuracy preserves the angle dependence.
An outer certified interval of this order gives joint
Chebyshev degree $O(\chi_{U,V})$.
\end{corollary}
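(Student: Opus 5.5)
The corollary is the special case $a=1-\delta_{\rm wh}$, $b=1+\delta_{\rm wh}$ of Lemma~\ref{lem:joint-cheb}. The plan is to invoke that lemma directly and then tighten its lower bound by a separate argument.

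\textbf{Upper bound.} Applying Lemma~\ref{lem:joint-cheb} to the pair $\widetilde K_1,\widetilde K_2$ gives
\[
\kappa([\widetilde K_1;\widetilde K_2])\le g_I\frac{1+\delta_{\rm wh}}{1-\delta_{\rm wh}}\chi_{U,V}^2 .
\]
This is exactly the right-hand inequality of \eqref{eq:inexact-whitening}. The hypotheses of the lemma hold because $0\le\delta_{\rm wh}<1$ gives $a>0$, and the row spaces of $\widetilde K_1,\widetilde K_2$ are $U,V$ by assumption.

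\textbf{Lower bound.} The lemma alone only yields $\frac{1-\delta_{\rm wh}}{1+\delta_{\rm wh}}\chi_{U,V}^2$, which is weaker than the claimed $\chi_{U,V}^2$. To remove the accuracy factor, I would use Theorem~\ref{thm:blockwise-opt}. Its proof is purely spectral and only uses the Gram matrices of the two blocks.

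\emph{Case $I=\{0\}$.} Write $\mathsf G=\widetilde K_1^\top\widetilde K_1+\widetilde K_2^\top\widetilde K_2$. Each summand is positive definite on its own row space and vanishes on the orthogonal complement. Hence the $\mathsf G^{-1}$-orthogonality \eqref{eq:ginv-orthogonality} between $U$ and $V$ holds, and the Friedrichs-pair argument gives $\kappa_+(\mathsf G)\ge(1+c)/(1-c)$. Equivalently, $\widetilde K_j=S_jM_j$ with $M_j=\widetilde K_j$ and $S_j=I$, so the infimum in \eqref{eq:blockwise-opt} bounds $\kappa([\widetilde K_1;\widetilde K_2])$ from below.

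\emph{Case $I\neq\{0\}$.} Here $c_F$ is measured on the reduced pair $U_0=U\cap I^\perp$, $V_0=V\cap I^\perp$. I would repeat the Friedrichs-pair argument with unit $u\in U_0$, $v\in V_0$ and $u^\top v=c$. The step I expect to be the main obstacle is that $\mathsf G^{-1}$-orthogonality of $u$ and $v$ may fail when $U\cap V\ne\{0\}$. I would first try to restrict to a complementary decomposition of $U+V$ adapted to the Friedrichs pair. If that does not work, I would fall back to the weaker lemma-based lower bound, which still preserves the angle dependence up to constants. If $c=0$, the lower bound is trivial since $\kappa\ge1$.

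\textbf{Chebyshev degree.} Given an outer certified interval $[\lambda_-,\lambda_+]$ containing the positive spectrum of the stacked Gram matrix, with $\lambda_+/\lambda_-=O(\chi_{U,V}^2)$, the standard Chebyshev acceleration bound gives degree $N=\lceil\sqrt{\lambda_+/\lambda_-}\rceil=O(\chi_{U,V})$. This matches the choice of $N$ in Algorithm~\ref{alg:cross-apapc}.
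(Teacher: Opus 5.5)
Your upper bound, the irredundant case $I=\{0\}$ via Theorem~\ref{thm:blockwise-opt} with $S_j=\mathrm{Id}$, the trivial case $c_F=0$, and the degree claim all agree with the paper. The gap is the left inequality $\chi_{U,V}^2\le\kappa([\widetilde K_1;\widetilde K_2])$ when $I\neq\{0\}$ and $c_F(U,V)>0$. You correctly see that $\mathsf G^{-1}$-orthogonality of $U$ and $V$ fails there. However, you leave the repair as something you ``would try'' and otherwise fall back to $\frac{1-\delta_{\rm wh}}{1+\delta_{\rm wh}}\chi_{U,V}^2$. That bound is strictly weaker than what the corollary asserts, so in the redundant case the left inequality of \eqref{eq:inexact-whitening} is not proved.

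The missing idea is to compress onto a subspace free of the common directions, and then use the fact that compression cannot increase the positive condition number.
\begin{itemize}
\item Put $U_0=U\cap I^\perp$, $V_0=V\cap I^\perp$ and $S_0=U_0+V_0\subseteq I^\perp$, and consider the blocks $\widetilde K_jP_{S_0}$.
\item Since $I\perp S_0$ and $U_0\subseteq S_0$, their row spaces are $P_{S_0}U=U_0$ and $P_{S_0}V=V_0$.
\item These satisfy $U_0\cap V_0\subseteq I\cap I^\perp=\{0\}$. By definition \eqref{eq:friedrichs}, their Friedrichs cosine is exactly $c_F(U,V)$.
\item Theorem~\ref{thm:blockwise-opt} therefore applies to the compressed stack, whose Gram matrix is $P_{S_0}\mathsf GP_{S_0}$, and gives $\kappa_+(P_{S_0}\mathsf GP_{S_0})\ge\chi_{U,V}^2$.
\item $\mathsf G$ is positive definite on $U+V\supseteq S_0$. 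Hence its extremal Rayleigh quotients over $U+V$ bracket those over $S_0$, and $\kappa([\widetilde K_1;\widetilde K_2])=\kappa_+(\mathsf G)\ge\kappa_+(P_{S_0}\mathsf GP_{S_0})$.
\end{itemize}
This is the paper's argument.

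Your idea of a ``decomposition adapted to the Friedrichs pair'' can also be made to work, but it needs the key observation you did not supply.
\begin{itemize}
\item For a Friedrichs pair $u\in U_0$, $v\in V_0$ with $u^\top v=c$, Cauchy--Schwarz forces $P_Uv=cu$ and $P_Vu=cv$.
\item On the plane $\Pi=\operatorname{span}\{u,v\}$ this gives $P_\Pi U=\operatorname{span}\{u\}$ and $P_\Pi V=\operatorname{span}\{v\}$.
\item The compressed Gram is then $\alpha uu^\top+\beta vv^\top$ with $\alpha,\beta>0$.
\item Its condition number is at least $(1+c)/(1-c)$. This follows from Lemma~\ref{lem:weighted-projectors} together with $\alpha+\beta\ge2\sqrt{\alpha\beta}$, and the same monotonicity-under-compression step finishes the proof.
\end{itemize}
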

\begin{proof}
Apply Lemma~\ref{lem:joint-cheb} with
$a=1-\delta_{\rm wh}$ and $b=1+\delta_{\rm wh}$ for the upper bound.
For $I=\{0\}$ and both row spaces nonzero,
Theorem~\ref{thm:blockwise-opt} gives the lower bound.
For $I\ne\{0\}$ and $c_F(U,V)>0$, put
$U_0=U\cap I^\perp$, $V_0=V\cap I^\perp$ and $S_0=U_0+V_0$.
The compressed maps $\widetilde K_jP_{S_0}$ have irredundant
row spaces $U_0,V_0$ and the same Friedrichs cosine.
Theorem~\ref{thm:blockwise-opt} bounds their stacked condition
below by $\chi_{U,V}^2$.
The extremal Rayleigh quotients of the full positive Gram on
$U+V$ bound those of its compression to $S_0$ from outside,
so its condition number is at least as large.
For $c_F(U,V)=0$, including a zero row space, the lower bound
is simply $\kappa\ge1$.
Since $g_I\le2$, taking square roots of the certified condition
bound proves the degree claim. Fixed inner accuracy suffices
independently of the target outer accuracy.
\end{proof}

\subsection{Constructive polynomial channel normalization}

\begin{proposition}[Polynomial fixed-accuracy channel normalization]
\label{prop:poly-normalization}
Let $M\ne0$, $Q=MM^\top$, $U=\Range(M^\top)$, and suppose
$\operatorname{spec}(Q)\subset\{0\}\cup[a,b]$, where $0<a\le b$.
Put $\kappa_{\rm poly}=b/a$.
For $0<\delta_{\rm wh}<1$ there is a real polynomial $p_N$ with
\[
N+1=O\!\left(\sqrt{\kappa_{\rm poly}}\log\frac6{\delta_{\rm wh}}\right)
\]
such that $\widehat M=p_N(Q)M$ satisfies
\begin{equation}
(1-\delta_{\rm wh})P_U
\preceq\widehat M^\top\widehat M
\preceq(1+\delta_{\rm wh})P_U.
\label{eq:poly-normalization-certificate}
\end{equation}
Moreover $\Ker\widehat M=\Ker M$ and
$\Range(\widehat M^\top)=U$.
For a feasible equation $Mw=v$, its equivalent normalized equation
is $\widehat Mw=p_N(Q)v$.
One forward or adjoint normalized action uses at most $2N+1$
actions of $M,M^\top$.
\end{proposition}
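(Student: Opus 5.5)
The plan is to reduce everything to a scalar approximation problem on the positive spectrum of $Q=MM^\top$. Using the SVD of $M$, write $M=\sum_k\sigma_k y_k x_k^\top$ with $\sigma_k^2\in[a,b]$. Then
\[
\widehat M^\top\widehat M=M^\top p_N(Q)^2M=\sum_k \sigma_k^2p_N(\sigma_k^2)^2\,x_kx_k^\top .
\]
So the certificate \eqref{eq:poly-normalization-certificate} is equivalent to the scalar condition
\[
|\lambda p_N(\lambda)^2-1|\le\delta_{\rm wh}\quad\text{for all }\lambda\in[a,b].
\]
On $U^\perp=\Ker M$ both sides vanish. Hence the task is to build a polynomial $p_N$ with $p_N(\lambda)\approx\lambda^{-1/2}$ on $[a,b]$. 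It suffices to have the relative accuracy $|\sqrt\lambda\,p_N(\lambda)-1|\le\delta_{\rm wh}/3$, because then $|s^2-1|\le|s-1|(|s|+1)\le\delta_{\rm wh}/3\cdot(2+\delta_{\rm wh}/3)\le\delta_{\rm wh}$.

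For the approximation step, rescale $\lambda=a\,t$ with $t\in[1,\kappa_{\rm poly}]$. Then we need a polynomial approximating $t^{-1/2}$ on $[1,\kappa]$ to absolute accuracy $O(\delta_{\rm wh})$, since $t^{-1/2}\le1$ there. The function $t^{-1/2}$ is analytic in the Bernstein ellipse of $[1,\kappa]$ with foci $1,\kappa$ that avoids the branch point $0$. Mapped to $[-1,1]$, the singularity sits at $x_0=-(\kappa+1)/(\kappa-1)$. The Bernstein parameter is
\[
\rho=|x_0|+\sqrt{x_0^2-1}=\frac{\sqrt\kappa+1}{\sqrt\kappa-1}\approx1+\frac{2}{\sqrt\kappa}.
\]
The standard Chebyshev truncation bound gives error $O(\max|f|_{E_{\rho'}}\rho'^{-N}/(\rho'-1))$ for any $\rho'<\rho$. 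I will take $\rho'$ halfway, so $\rho'-1\asymp1/\sqrt\kappa$. On that ellipse $|f|=O(\kappa^{1/2}\cdot\text{poly})$, because the distance to $0$ is $\gtrsim1/\kappa$ in the scaled variable after rescaling.

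The main obstacle is this prefactor: a $\kappa$-power prefactor would add a $\log\kappa$ term to $N$, which the statement does not allow. I will avoid it in one of two ways. The first is to approximate $1/\sqrt{\lambda}$ through $p_N(\lambda)=a^{-1/2}q(\lambda/a)$ with $q$ obtained from the classical Newton–Schulz/Chebyshev construction for inverse square roots. The second is to use the explicit Chebyshev expansion of $(1-x\cdot\text{const})^{-1/2}$, whose coefficients decay like $\rho^{-k}$ times a bounded factor. In either case the result is $N=O(\sqrt\kappa\log(6/\delta_{\rm wh}))$. If a stray $\log\kappa$ survives, I would absorb it by first applying a fixed-degree-$O(\sqrt\kappa)$ Chebyshev preconditioner that maps the spectrum into a constant-ratio interval, and then approximating on that interval.

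The remaining claims follow directly. Since $\sqrt\lambda p_N(\lambda)\ge1-\delta_{\rm wh}/3>0$ on $[a,b]$, $p_N(Q)$ is injective on $\Range(M)$. Lemma~\ref{lem:rowspace-invariance} then gives $\Ker\widehat M=\Ker M$ and $\Range(\widehat M^\top)=U$. Feasibility equivalence holds because $Mw=v$ implies $\widehat Mw=p_N(Q)v$. Conversely, $v\in\Range(M)$ and injectivity there give the reverse implication. For the cost, evaluating $p_N(Q)Mw$ by a Horner or Clenshaw recurrence uses $N$ applications of $Q$, each being one $M^\top$ and one $M$ action, plus the initial $M$. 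This totals $2N+1$ actions, and the adjoint $M^\top p_N(Q)$ is symmetric in cost.
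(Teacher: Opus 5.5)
Your SVD reduction to a scalar condition, the step from $|\sqrt\lambda\,p_N(\lambda)-1|\le\delta_{\rm wh}/3$ to the certificate, and the kernel, feasibility and cost claims are all fine. The gap is the approximation step itself. It is the entire content of the proposition, and you only assert it.

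\emph{The rescaled target is backwards.} With $\lambda=at$ you need $|q(t)-t^{-1/2}|\le(\delta_{\rm wh}/3)\,t^{-1/2}$. Near $t=\kappa_{\rm poly}$ this is an absolute accuracy of order $\delta_{\rm wh}\kappa_{\rm poly}^{-1/2}$, not $\delta_{\rm wh}$. Converting absolute to relative error needs a \emph{lower} bound on $t^{-1/2}$, so $t^{-1/2}\le1$ goes the wrong way.

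\emph{Every estimate you propose is uniform.} Such an error is of order $\rho^{-N}$ near $t=1$, the endpoint closest to the branch point. The Bernstein-ellipse bound is even worse, since it carries the factor $\max_{E_{\rho'}}|f|/(\rho'-1)\gtrsim\sqrt{\kappa_{\rm poly}}$. A coefficient-tail bound for $(1-\beta x)^{-1/2}$ is again a uniform tail. In any normalization, its value at the nearest endpoint divided by the minimum of the function is of order $\sqrt{\kappa_{\rm poly}}\,\rho^{-N}$, up to a factor $(N/\sqrt{\kappa_{\rm poly}})^{-1/2}$. So your route gives only $N$ of order $\sqrt{\kappa_{\rm poly}}\log(\kappa_{\rm poly}/\delta_{\rm wh})$, which is exactly the $\sqrt{\kappa_{\rm poly}}\log\kappa_{\rm poly}$ term the statement excludes.

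\emph{The remaining remedies do not remove this term.}
\begin{itemize}
\item Newton--Schulz with the standard scaling triples the degree per step. It needs $\Theta(\log\kappa_{\rm poly})$ steps before its error at $\lambda=a$ is $O(1)$, so its degree is polynomially larger than $\sqrt{\kappa_{\rm poly}}$.
\item A Chebyshev preconditioner $g(\lambda)\approx\lambda^{-1}$ does not help, because $\lambda^{-1/2}$ is not a function of $\lambda g(\lambda)$. The composition $g\cdot h(\lambda g^2)$ that would work requires $g$ to be a degree-$O(\sqrt{\kappa_{\rm poly}})$ approximation of $\lambda^{-1/2}$ with constant relative accuracy. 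That is the missing step itself.
\end{itemize}

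The missing idea is a \emph{multiplicative} error bound, which the paper obtains without any ellipse estimate. Assume $a<b$ and set $c=(a+b)/2$, $s=(b-a)/2$, $R_{N+1}(x)=\mathsf T_{N+1}((c-x)/s)$. Interpolate $x^{-1/2}$ at the $N+1$ zeros of $R_{N+1}$, and use $x^{-1/2}=\pi^{-1}\int_0^\infty t^{-1/2}(x+t)^{-1}\,dt$.
\begin{itemize}
\item Interpolation is linear, so it suffices to handle each resolvent $(x+t)^{-1}$ with $t\ge0$.
\item Its interpolant at these nodes is explicit: $q_t(x)=[1-R_{N+1}(x)/R_{N+1}(-t)]/(x+t)$.
\item The error is $R_{N+1}(x)/[R_{N+1}(-t)(x+t)]$. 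On $[a,b]$ its modulus is at most $(x+t)^{-1}/\mathsf T_{N+1}(c/s)$, because $|R_{N+1}|\le1$ there and $R_{N+1}(-t)=\mathsf T_{N+1}((c+t)/s)\ge\mathsf T_{N+1}(c/s)$.
\item Integrating in $t$ gives $|x^{-1/2}-p_N(x)|\le x^{-1/2}/\mathsf T_{N+1}(c/s)\le 2x^{-1/2}e^{-2(N+1)/\sqrt{\kappa_{\rm poly}}}$.
\end{itemize}
Hence $N+1=\lceil\frac{\sqrt{\kappa_{\rm poly}}}{2}\log\frac{6}{\delta_{\rm wh}}\rceil$ already gives relative accuracy $\delta_{\rm wh}/3$, with no $\kappa_{\rm poly}$-dependent prefactor. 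The rest of your argument then goes through unchanged.
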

\begin{proof}
Set $\tau=\delta_{\rm wh}/3$.
If $a=b$, take $N=0$ and $p_0=a^{-1/2}$.
Otherwise put $c=(a+b)/2$, $s=(b-a)/2$.
Let $\mathsf T_k$ be the degree-$k$ Chebyshev polynomial
of the first kind and set
\[
N+1=\left\lceil\frac{\sqrt{\kappa_{\rm poly}}}2
                  \log\frac6{\delta_{\rm wh}}\right\rceil,
\qquad R_{N+1}(x)=\mathsf T_{N+1}((c-x)/s).
\]
Let $p_N$ interpolate $x^{-1/2}$ at the $N+1$ roots of $R_{N+1}$
in $(a,b)$. To bound its relative error, use
\[
x^{-1/2}=\frac1\pi\int_0^\infty\frac{t^{-1/2}}{x+t}\,dt.
\]
For $t\ge0$, the polynomial
\[
q_t(x)=\frac{1-R_{N+1}(x)/R_{N+1}(-t)}{x+t}
\]
has degree at most $N$ and interpolates $(x+t)^{-1}$ at
the same roots. Integration of the finite interpolation formula
therefore gives
\[
\left|x^{-1/2}-p_N(x)\right|
\le \frac{x^{-1/2}}{\mathsf T_{N+1}(c/s)}
\quad (a\le x\le b),
\]
because $|R_{N+1}(x)|\le1$ and
$R_{N+1}(-t)=\mathsf T_{N+1}((c+t)/s)\ge\mathsf T_{N+1}(c/s)>0$.
Writing $\rho=(\sqrt{\kappa_{\rm poly}}-1)/(\sqrt{\kappa_{\rm poly}}+1)$,
\[
\frac1{\mathsf T_{N+1}(c/s)}
=\frac{2\rho^{N+1}}{1+\rho^{2(N+1)}}
\le2\exp(-2(N+1)/\sqrt{\kappa_{\rm poly}})\le\tau.
\]
Thus $|\sqrt{x}\,p_N(x)-1|\le\tau$.
For a thin SVD $M=U_M\Sigma V_M^\top$ with positive singular
values $\sigma_i$,
\[
\widehat M^\top\widehat M
=V_M\diag\!\left(\sigma_i^2p_N(\sigma_i^2)^2\right)V_M^\top.
\]
Its positive eigenvalues lie in $[(1-\tau)^2,(1+\tau)^2]$,
which is contained in $[1-\delta_{\rm wh},1+\delta_{\rm wh}]$.
This proves \eqref{eq:poly-normalization-certificate} and both
kernel and row-space assertions.

Exact whitening is $(Q^\dagger)^{1/2}M$, with the
Moore--Penrose inverse acting as zero on $\Ker Q$.
The polynomial need not approximate that operator at zero:
$M$ maps into $\Range(Q)$, so $p_N(0)$ is immaterial.
Feasibility implies $v\in\Range(M)$, and $p_N(Q)$ is
injective on that range, proving equivalence of the affine equations.
A forward action uses one $M$ followed by $N$ products
$Qz=M(M^\top z)$; the adjoint reverses the order.
These use $(N+1,N)$ forward/adjoint actions, or $(N,N+1)$.
There is no additional $\log\kappa_{\rm poly}$ factor.
For fixed $\delta_{\rm wh}$, $N+1=O(\sqrt{\kappa_{\rm poly}})$.
\end{proof}

\section{Certified CC-MAC Local Realization}\label{app:ccmac-normalization}
This section specializes the generic normalization results above to the executable CC-MAC lift and records the spectral certificates used by the primitive-cost ledger.

\paragraph{Structural parameters.}
In the original variable ordering put $H=[A\ B]$ and
$T=[C\ 0]$. A fixed permutation groups these into the local
blocks $H_i=[A_i\ B_i]$ and $\bar C_i=[C_i\ 0]$.
Define
\begin{equation}
\begin{aligned}
h&:=\|H\|^2=\max_i\lambda_{\max}(A_iA_i^\top+B_iB_i^\top),\\
S_{HC}&:=\frac1n\sum_i
 (A_iP_{\Ker C_i}A_i^\top+B_iB_i^\top),\\
d_D&:=\max_i\lambda_{\max}(D_i^\top D_i),\qquad
S_D:=\frac1n\sum_iD_i^\top D_i.
\end{aligned}
\label{eq:normalization-structural}
\end{equation}
The subscript on $d_D$ distinguishes this scale from the primal
dimension $d$. Set $\mu_{HC}=\lambda_{\min}^+(S_{HC})$ if
$S_{HC}\ne0$, and $\mu_{HC}=0$ otherwise.
Set $\mu_D=\lambda_{\min}^+(S_D)$ if $D\ne0$, and $\mu_D=0$
otherwise. The ideal condition parameters are
\[
k_1:=\widetilde\kappa_{HC}
=\begin{cases}h/\mu_{HC},&S_{HC}\ne0,\\1,&S_{HC}=0,\end{cases}
\qquad
k_2:=\widehat\kappa_{D^\top}
=\begin{cases}d_D/\mu_D,&D\ne0,\\1,&D=0.\end{cases}
\]
For $C\ne0$ define
\[
\kappa_C:=
\frac{\max_i\sigma_{\max}(C_i)^2}
     {\min_{i:C_i\ne0}\sigma_{\min}^+(C_i)^2},
\]
and use $\kappa_C=1$ when $C=0$.
The graph convention remains
$\kappa_W=\lambda_{\max}(W)/\lambda_{\min}^+(W)$.
For the one-agent case use the bookkeeping value $\kappa_W=1$
and perform no communication.

\begin{assumption}[Supplied numerical spectral certificates]
\label{ass:spectral-certificates}
For $H\ne0$ the implementation receives $\bar h\ge h$ and
$0<\underline\mu_{HC}\le\bar h$ satisfying
\[
\operatorname{spec}(S_{HC})
\subset\{0\}\cup[\underline\mu_{HC},\bar h].
\]
For $D\ne0$ it receives $\bar d_D\ge d_D$ and
$0<\underline\mu_D\le\bar d_D$ satisfying
$\operatorname{spec}(S_D)\subset\{0\}\cup
[\underline\mu_D,\bar d_D]$.
For $C\ne0$ it receives $0<\ell_C\le u_C$ with
$\operatorname{spec}(CC^\top)\subset\{0\}\cup[\ell_C,u_C]$.
For $n\ge2$ it receives $0<\ell_W\le u_W$ with
$\operatorname{spec}(W)\subset\{0\}\cup[\ell_W,u_W]$.
The outer method receives the objective bounds and either
$\Xi\ge\chix$ or a valid positive-spectrum interval for the
final normalized stack.
\end{assumption}
Define the corresponding certified ratios by
\begin{equation}
\bar k_1=\bar h/\underline\mu_{HC},\qquad
\bar k_2=\bar d_D/\underline\mu_D,\qquad
\bar\kappa_C=u_C/\ell_C,\qquad
\bar\kappa_W=u_W/\ell_W.
\label{eq:normalization-cert-ratios}
\end{equation}
The respective bookkeeping values are $1$ for $H=0$, $D=0$,
$C=0$, and $n=1$, so no absent-block ratio is evaluated.
If $S_{HC}=0$ is known and $H\ne0$, one may take
$\underline\mu_{HC}=\bar h$.
The excluded-gap certificates allow rank deficiency and do not
require the rank as an input.

These are supplied problem/setup parameters, as in
spectrally tuned Chebyshev preconditioning
\citep{salim2022optimal,yarmoshik2025coupled}.
Exact eigenvalues or eigendecompositions are not required.
Conservative bounds suffice; their ratios determine the costs.
Constant-factor-tight certificates recover the ideal orders.
Obtaining certificates from arbitrary black-box access is a
separate setup operation, outside the solve counts below.
Upper bounds from explicitly available local matrix entries may
be used, with any acquisition and aggregation costs charged
separately; access to such entries is not implicit in a matvec oracle.

\paragraph{Preliminary graph and local normalization.}
For $n\ge2$ and $\ell_W<u_W$, put
$c_W=(u_W+\ell_W)/2$, $s_W=(u_W-\ell_W)/2$ and
\[
\nu=\left\lceil\frac{\sqrt{\bar\kappa_W}}2\log4\right\rceil,
\qquad
P_W(x)=1-\frac{\mathsf T_\nu((c_W-x)/s_W)}
                     {\mathsf T_\nu(c_W/s_W)}.
\label{eq:graph-cheb-filter}
\]
For $\ell_W=u_W$ take $P_W(x)=x/\ell_W$.
Then $\widetilde W=P_W(W)$ has the same kernel as $W$ and
positive spectrum in $[1/2,3/2]$.
Thus its positive Gram spectrum lies in $[1/4,9/4]$,
while one action costs $O(\sqrt{\bar\kappa_W})$ raw neighbor
rounds. The filter is applied to $W$, not $W^2$.

For $C\ne0$, apply Proposition~\ref{prop:poly-normalization} with
accuracy $1/2$, and write
\[
C_\star=p_C(CC^\top)C,\qquad c_\star=p_C(CC^\top)\bm c.
\]
Then
$\tfrac12P_{\Range(C^\top)}
\preceq C_\star^\top C_\star
\preceq\tfrac32P_{\Range(C^\top)}$ and
$\Ker C_\star=\Ker C$.
This block-diagonal operation costs
$O(\sqrt{\bar\kappa_C})$ raw $C/C^\top$ products per action
and no communication. If $C=0$, omit this layer and its rows.

\begin{lemma}[Certified spectra of the balanced executable channels]
\label{lem:structured-normalization}
Suppose $H,D\ne0$ and $n\ge2$.
With $\widetilde W_s=\widetilde W\otimes I_s$, choose
\[
\alpha^2=12\bar h,\qquad
\beta^2=6\bar h,\qquad
\gamma^2=8\bar d_D,
\]
and form the final executable channels
\begin{equation}
\mathcal L_1=
\begin{bmatrix}A&B&\alpha\widetilde W_m\\
                \beta C_\star&0&0\end{bmatrix},
\qquad
\mathcal L_2=
\begin{bmatrix}0&D&0\\0&\gamma\widetilde W_q&0\end{bmatrix}.
\label{eq:balanced-normalization-channels}
\end{equation}
Their right-hand sides are
$\col(\bm b,\beta c_\star)$ and $\col(\bm e,0)$, respectively.
Their positive Gram spectra satisfy
\begin{equation}
\begin{aligned}
\operatorname{spec}^+(\mathcal L_1\mathcal L_1^\top)
 &\subset[\underline\mu_{HC}/4,37\bar h],\\
\operatorname{spec}^+(\mathcal L_2\mathcal L_2^\top)
 &\subset[\underline\mu_D/2,19\bar d_D].
\end{aligned}
\label{eq:structured-channel-intervals}
\end{equation}
If $S_{HC}=0$, the first lower endpoint improves to $\bar h$.
The final row spaces obey
$\chi_{\rm loc}\le\sqrt3\,\chix$, and the penalty objective
in Proposition~\ref{prop:decentralized-bridge}, with
$r=\mu_f/(2\bar h)$, has $\kappa_{\Phi_{\rm loc}}=O(\kappa_f)$.
\end{lemma}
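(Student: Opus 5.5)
The plan is to prove the four claims of Lemma~\ref{lem:structured-normalization} in turn. The two spectral inclusions in \eqref{eq:structured-channel-intervals} are the substantive part. The angle and penalty statements I will reduce to Proposition~\ref{prop:decentralized-bridge}.

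\emph{Second channel.} I will work with the quadratic form
\[
\|\mathcal L_2^\top(s,t)\|^2=\|D^\top s+\gamma\widetilde W_q t\|^2,
\]
or equivalently with $\mathcal L_2^\top\mathcal L_2=D^\top D+\gamma^2\widetilde W_q^2$ on the $\bm z$-block, since it has the same positive spectrum. For the upper bound, $\|D\|^2\le d_D\le\bar d_D$ and $\widetilde W^2\preceq\tfrac94 I$ give $\bar d_D+\tfrac94\gamma^2=19\bar d_D$. For the lower bound I will identify $\Ker(\mathcal L_2^\top\mathcal L_2)$ as the consensus vectors $\one\otimes\zeta$ with $\zeta\in\Ker S_D$, using Lemma~\ref{lem:network-range}. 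A vector orthogonal to this kernel then splits as $\bm z=\one\otimes\zeta+\bm z_\perp$, with $\zeta\perp\Ker S_D$ and $\bm z_\perp$ an arbitrary disagreement vector. Applying $\|u+w\|^2\ge\tfrac12\|u\|^2-\|w\|^2$ gives
\[
\|D\bm z\|^2\ge\tfrac12 n\,\zeta^\top S_D\zeta-d_D\|\bm z_\perp\|^2 .
\]
The graph term gives $\gamma^2\|\widetilde W_q\bm z\|^2\ge\tfrac14\gamma^2\|\bm z_\perp\|^2=2\bar d_D\|\bm z_\perp\|^2$. Summing yields at least $\tfrac12\underline\mu_D\|\one\otimes\zeta\|^2+\bar d_D\|\bm z_\perp\|^2\ge\tfrac12\underline\mu_D\|\bm z\|^2$, using $\underline\mu_D\le\bar d_D$.

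\emph{First channel.} Here I will work with the dual form
\[
\|\mathcal L_1^\top(a,c)\|^2=\|A^\top a+\beta C_\star^\top c\|^2+\|B^\top a\|^2+\alpha^2\|\widetilde W_m a\|^2 .
\]
For the upper bound I expand crudely: $2\|A^\top a\|^2+2\beta^2\cdot\tfrac32\|c\|^2+\|B^\top a\|^2+\tfrac94\alpha^2\|a\|^2\le(2h+27\bar h)\|a\|^2+18\bar h\|c\|^2$, which is at most $37\bar h$ times the squared norm.

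For the lower bound I split $a=\one\otimes\bar a+a_\perp$. The graph term contributes at least $\tfrac14\alpha^2\|a_\perp\|^2=3\bar h\|a_\perp\|^2$, and this absorbs every cross term involving $a_\perp$, whose size is at most $2h\|a_\perp\|^2$. For the consensus part I decompose $A_i^\top\bar a$ into its $\Ker C_i$ component and its $\Range(C_i^\top)$ component. Then
\[
\|A^\top(\one\otimes\bar a)+\beta C_\star^\top c\|^2=\sum_i\|P_{\Ker C_i}A_i^\top\bar a\|^2+\|P_{\Range C^\top}A^\top(\one\otimes\bar a)+\beta C_\star^\top c\|^2 .
\]
The first piece together with $\|B^\top(\one\otimes\bar a)\|^2$ equals $n\,\bar a^\top S_{HC}\bar a\ge\underline\mu_{HC}\|\one\otimes\bar a\|^2$, where $\bar a\perp\Ker S_{HC}$ modulo the kernel of $\mathcal L_1^\top$. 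The remaining range piece must control $\|c\|^2$ for $c\perp\Ker C_\star^\top$, using $\beta^2C_\star C_\star^\top\succeq3\bar h I$ there. I will do this with the same $\tfrac12\|u\|^2-\|w\|^2$ interpolation: the subtracted $\|P_{\Range}A^\top\bar a\|^2\le h\|\bar a\|^2$ is paid for by a fraction of the $S_{HC}$ term and of the $c$ term. This weighting costs the factor $4$ in $\underline\mu_{HC}/4$.

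I expect the main obstacle to be this first-channel lower bound. The difficulty is identifying the kernel of $\mathcal L_1^\top$ exactly, namely pairs $(a,c)$ with $a$ consensus and $\beta C_\star^\top c=-A^\top a$, and then tuning the interpolation weights so that the constants close at $\tfrac14$. For the case $S_{HC}=0$ only the $c$ and disagreement terms survive, both of which are at least $\bar h$ times the squared norm, which gives the improved endpoint $\bar h$.

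\emph{Angle and penalty claims.} By Lemma~\ref{lem:rowspace-invariance}, $\Range(C_\star^\top)=\Range(C^\top)$, and scaling by $\beta>0$ does not change row spaces. Hence $\Range(\mathcal L_1^\top)$ and $\Range(\mathcal L_2^\top)$ coincide with $U_{\rm loc}$ and $V_{\rm loc}$ from Proposition~\ref{prop:decentralized-bridge}, because the bridge argument uses only these row spaces. The filter gives $\underline\sigma_W=\tfrac12$, so $\alpha^2=12\bar h$ is exactly the bridge's choice $\alpha=\sqrt{3\bar h}/\underline\sigma_W$. Therefore $\chi_{\rm loc}\le\sqrt3\,\chix$ and $\kappa_{\Phi_{\rm loc}}=O(\kappa_f)$ follow directly from that proposition, with $\overline\sigma_W/\underline\sigma_W=3$.
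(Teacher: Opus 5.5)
Your second-channel argument, both upper bounds, and the reduction of the angle and penalty claims to Proposition~\ref{prop:decentralized-bridge} are correct and essentially the paper's. The gap is the first-channel lower bound. You flag it but do not carry it out, and the mechanism you sketch would not close.

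First, for $(a,c)\in\Range(\mathcal L_1)$ the block mean $\bar a$ need not be orthogonal to $\Ker S_{HC}$. The kernel $\Ker\mathcal L_1^\top$ consists of pairs $(\one\otimes\xi,c_\xi)$ with $\xi\in\Ker S_{HC}$ and $\beta C_\star^\top c_\xi=-A^\top(\one\otimes\xi)$, so orthogonality to it only yields $n\langle\bar a,\xi\rangle+\langle c,c_\xi\rangle=0$. For $A_i=I_2$, $B_i=0$, $C_i=[1\ 0]$ one has $S_{HC}=\diag(0,1)$, and $\mathcal L_1(u,0)$ with every $x_i=e_1$ has $\bar a=e_1\in\Ker S_{HC}$. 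So $n\bar a^\top S_{HC}\bar a\ge\underline\mu_{HC}\|\one\otimes\bar a\|^2$ fails on the range, and ``modulo the kernel'' has to become an actual argument. Your $S_{HC}=0$ remark has the same hole: the consensus component is generally nonzero there (e.g.\ $A_i=C_i=1$, $B_i=0$), and bounding it by the $c$ and disagreement terms again needs the range condition you never use.

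Second, the subtracted term $\|P_{\Range C^\top}A^\top(\one\otimes\bar a)\|^2$ can be as large as $h\|\one\otimes\bar a\|^2$. The only positive $\bar a$-term is $\underline\mu_{HC}\|\one\otimes\bar a\|^2$, and $h/\underline\mu_{HC}$ is unbounded. For scalar blocks $A_i=C_i=1$, $B_i=\epsilon$ one has $\underline\mu_{HC}=\epsilon^2$ and $h=1+\epsilon^2$. Your $\tfrac12\|u\|^2-\|w\|^2$ split then leaves the coefficient $n(\epsilon^2-1)<0$ on $\bar a^2$, which no multiple of $\|c\|^2$ repairs (take $c=0$, a legitimate range vector here). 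The cross term must be charged to the $\beta^2$ term while keeping most of the $S_{HC}$ term; that is what $\beta^2=6\bar h$ is for.

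Third, with $T_\star:=[C_\star\ 0]$, the disagreement cross term $2\langle H^\top a_0+\beta T_\star^\top c,\,H^\top a_\perp\rangle$ is bilinear. Absorbing it into $3\bar h\|a_\perp\|^2$ therefore costs a fixed fraction of the consensus quantity, which your constant count omits. If the consensus step already cost a factor $4$, you would finish below $\underline\mu_{HC}/4$.

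The paper avoids these issues by working on the primal side. Set $\Pi=(\one\one^\top/n)\otimes I_m$, $T=[C\ 0]$ and $\mathcal J=\col(\Pi H,\beta T_\star)$. Writing $(a,c)=\mathcal L_1(u,y)$ gives $(\Pi a,c)=\mathcal Ju\in\Range(\mathcal J)$, which settles the kernel coupling for free. The positive singular values of $\mathcal J$ are then bounded using the orthogonal split $\Range(\mathcal J^\top)=\Range(T^\top)\oplus\Range(P_{\Ker T}H^\top\Pi)$. In this split the only cross term, $\|\Pi Hy_0\|^2\le h\|y_0\|^2$, sits on the $\Range(T^\top)$ component and is absorbed by $\tfrac{\beta^2}{2}\|y_0\|^2=3\bar h\|y_0\|^2$. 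This gives $\sigma_{\min}^+(\mathcal J)^2\ge\underline\mu_{HC}/2$, and $2\bar h$ when $S_{HC}=0$, since the second summand is then trivial. The disagreement split $\|u+w\|^2\ge\tfrac12\|u\|^2-\|w\|^2$ supplies the other factor $\tfrac12$.

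If you want to stay with the dual form, you need two matching ingredients. First, subtract from $(\Pi a,c)$ the $\Ker\mathcal J^\top$-pair generated by the $\Ker S_{HC}$-component of $\bar a$; since $(\Pi a,c)$ is the orthogonal projection of the difference onto $\Range(\mathcal J)$, its norm can only be smaller. Second, use a weighted Young inequality that keeps half of the $S_{HC}$ term while giving up most of the $\beta^2$ term.
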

\begin{proof}
This is the coupled/local spectral reduction
$A_i\mapsto H_i$, $C_i\mapsto\bar C_i$
of \citet[Lemma~4.4 and Appendix~H]{yarmoshik2026mixed};
the following estimates verify the compatible bridge scaling.

Let $\Pi=(\one\one^\top/n)\otimes I_m$,
$T_\star=[C_\star\ 0]$ and
$\mathcal J=\col(\Pi H,\beta T_\star)$.
The nonzero spectrum of
$\Pi HP_{\Ker T}H^\top\Pi$ equals that of $S_{HC}$.
For $t\in\Range(\mathcal J^\top)$, decompose
$t=y_0+g_\perp$ orthogonally with
$y_0\in\Range(T^\top)$ and
$g_\perp\in\Range(P_{\Ker T}H^\top\Pi)$.
Using $\|a+b\|^2\ge\frac12\|b\|^2-\|a\|^2$ gives
\[
\|\mathcal Jt\|^2
\ge(\beta^2/2-h)\|y_0\|^2+
   (\underline\mu_{HC}/2)\|g_\perp\|^2
\ge(\underline\mu_{HC}/2)\|t\|^2.
\]
If $S_{HC}=0$, then $g_\perp=0$ and the last bound improves to
$2\bar h\|t\|^2$.

For $z=(a,c)\in\Range(\mathcal L_1)$ write
$a_0=\Pi a$, $a_\perp=(\mathrm{Id}-\Pi)a$.
Representing $z=\mathcal L_1(u,y)$ shows
$(a_0,c)=\mathcal Ju\in\Range(\mathcal J)$.
Hence
\[
\|\mathcal L_1^\top z\|^2
\ge\tfrac12\|\mathcal J^\top(a_0,c)\|^2+
        (\alpha^2/4-h)\|a_\perp\|^2
\ge(\underline\mu_{HC}/4)\|z\|^2.
\]
When $S_{HC}=0$, this gives $\bar h\|z\|^2$.
The upper bound is
$\|\mathcal L_1\|^2\le
h+(9/4)\alpha^2+(3/2)\beta^2\le37\bar h$.

For channel 2, work on its active shared coordinates.
Its kernel is $\one\otimes\Ker S_D$.
For $z$ orthogonal to this kernel, decompose
$z=z_0+z_\perp$ into consensus and disagreement.
Then $\|Dz_0\|^2\ge\underline\mu_D\|z_0\|^2$, and
\[
\|Dz\|^2+\gamma^2\|\widetilde W_qz\|^2
\ge\tfrac12\underline\mu_D\|z_0\|^2+
       (\gamma^2/4-d_D)\|z_\perp\|^2
\ge\tfrac12\underline\mu_D\|z\|^2.
\]
The upper bound is $d_D+(9/4)\gamma^2\le19\bar d_D$.

At fixed $\alpha,\widetilde W$, replacing $C$ by $C_\star$
and positive row scaling preserve channel 1's row space.
Channel 2 graph filtering and positive scaling preserve its
row space. Replacing $W$ inside the lifted channel 1 generally
changes its full lifted row space.
Apply Proposition~\ref{prop:decentralized-bridge} to this
final lift: $\underline\sigma_W=1/2$ and
$\alpha^2\underline\sigma_W^2=3\bar h$ give the asserted
angle bound. The same proposition gives
$\mu_{\Phi_{\rm loc}}\ge\mu_f/2$ and $L_{\Phi_{\rm loc}}\le L_f+14\mu_f$.
\end{proof}

\paragraph{Zero blocks and reductions.}
If $H=0$, feasibility makes the original coupled equation
vacuous: remove that equation and the auxiliary variable,
take $\Phi_{\rm loc}=F$, and use only $[C_\star\ 0]$ for channel 1.
Its nonzero Gram spectrum is in $[1/2,3/2]$.
If $D=0$, omit the $D$ rows but retain
$\widetilde W_q$ with $\gamma=1$; its nonzero Gram spectrum
is in $[1/4,9/4]$.
If $C=0$, omit its rows and the $\beta$ scaling.
For $n=1$, omit both graph terms and the auxiliary variable
and take $\Phi_{\rm loc}=F$; the bounds
\eqref{eq:structured-channel-intervals} still hold for the
remaining balanced nonzero $H,D$ branches.
Completely empty channels need no calls.
When $B=0$, $S_{HC}$ and $k_1$ reduce to the predecessor
coupled/local quantities and $\chix=1$.
When $C=0$, $S_{HC}=n^{-1}\sum_iH_iH_i^\top$ and the local
normalization layer disappears.
The substitution is a channel spectral reduction, not a
reduction of the full cross-coupled problem.

\paragraph{Final normalization and outer certificate.}
For each nonempty channel, set $Q_j=\mathcal L_j\mathcal L_j^\top$,
$U_j=\Range(\mathcal L_j^\top)$ and
$\widehat L_j=p_j(Q_j)\mathcal L_j$.
For fixed $0<\delta_{\rm wh}<1$, independent of $\eps$,
Proposition~\ref{prop:poly-normalization} and
\eqref{eq:structured-channel-intervals} give degrees
$N_j^{\rm wh}+1=O(\sqrt{\bar k_j})$ and
\[
(1-\delta_{\rm wh})P_{U_j}
\preceq\widehat L_j^\top\widehat L_j
\preceq(1+\delta_{\rm wh})P_{U_j}.
\]
Transform each final right-hand side by the same $p_j(Q_j)$.
This last transformation preserves each final channel row
space. These spectral estimates concern $\mathcal L_j$,
not automatically the literal unbalanced $L_j$ of
\eqref{eq:local-bridge-channels}.
For $\widehat L=[\widehat L_1;\widehat L_2]$,
Lemma~\ref{lem:joint-cheb} permits common rows to remain and
a supplied $\Xi\ge\chix$ gives
\begin{equation}
\operatorname{spec}^+(\widehat L^\top\widehat L)
\subset\left[
\frac{1-\delta_{\rm wh}}{3\Xi^2},
\,2(1+\delta_{\rm wh})\right].
\label{eq:normalized-joint-certificate}
\end{equation}
The positive graph condition is already constant inside
$\mathcal L_j$. Its physical cost
$O(\sqrt{\bar\kappa_W})$ is therefore charged once per graph
action, not again inside $N_j^{\rm wh}$.
The fixed normalization accuracy contributes no further
$\log(1/\eps)$ factor.

\section{Cross-APAPC Algorithm and Smooth-SC Upper Bound}
\label{app:algorithmproof}
\subsection{Chebyshev correction used inside Cross-APAPC}
Let $Q=\widehat K^\top\widehat K$, $c=\widehat K^\top\widehat v$, and suppose
$0<\lambda_-\le\lambda_{\min}^+(Q)\le\lambda_{\max}(Q)\le\lambda_+$.
Algorithm~\ref{alg:cheb-correction} is the classical Chebyshev semi-iteration for the consistent normal equation $Qz=c$ (Algorithm~2 of \citet{salim2022optimal} with $K,b$ replaced by $\widehat K,\widehat v$).  It is an inner correction routine, not a second outer optimization method.

\begin{algorithm}[H]
\caption{Chebyshev normal-equation correction $\operatorname{Cheb}_N(z_0;\widehat K,\widehat v)$}
\label{alg:cheb-correction}
\begin{algorithmic}[1]
\Require $z_0$, $\widehat K$, $\widehat v$, interval $[\lambda_-,\lambda_+]$, degree $N\ge1$
\State $\varrho\gets(\lambda_+-\lambda_-)^2/16$, $\nu\gets(\lambda_++\lambda_-)/2$, $\gamma_0\gets-\nu/2$
\State $p_0\gets-\widehat K^\top(\widehat Kz_0-\widehat v)/\nu$, $z_1\gets z_0+p_0$
\For{$i=1,\ldots,N-1$}
  \State $\beta_{i-1}\gets\varrho/\gamma_{i-1}$, $\gamma_i\gets-(\nu+\beta_{i-1})$
  \State $p_i\gets[\widehat K^\top(\widehat Kz_i-\widehat v)+\beta_{i-1}p_{i-1}]/\gamma_i$
  \State $z_{i+1}\gets z_i+p_i$
\EndFor
\State \Return $z_N$
\end{algorithmic}
\end{algorithm}
Each inner step uses one multiplication by $\widehat K$ and one by $\widehat K^\top$.  The degree $N=\lceil\sqrt{\lambda_+/\lambda_-}\rceil$ used by Cross-APAPC is what converts the joint normalized condition number into the affine-oracle factor.

\subsection{Complete \textsc{Cross-APAPC} specification}
Choose $J$ execution channels $M_1,\ldots,M_J$ with right-hand sides $v_j$.  For canonical or local \CCMAC{} take $J=2$ and respectively $M_j=K_j$ or $M_j=\mathcal L_j$, including the preliminary transformations in Lemma~\ref{lem:structured-normalization}.  Let $U_j:=\Range(M_j^\top)$ and choose channelwise maps $S_j$ such that
\[
\widehat K_j:=S_jM_j,\qquad
 aP_{U_j}\preceq \widehat K_j^\top\widehat K_j\preceq bP_{U_j},
\qquad 0<a\le b,
\]
with universal $a,b$.  Set $\widehat v_j=S_jv_j$ and
\[
\widehat K:=\col(\widehat K_1,\ldots,\widehat K_J),\qquad
\widehat v:=\col(\widehat v_1,\ldots,\widehat v_J).
\]
Injectivity of $S_j$ on $\Range(M_j)$ preserves feasibility.  Supply bounds
\[
0<\lambda_-\le\lambda_{\min}^+(\widehat K^\top\widehat K),\qquad
\lambda_{\max}(\widehat K^\top\widehat K)\le\lambda_+.
\]
For a generic $J$-channel instance the certified ratio is $O(\chiJ^2)$.  For two-channel \CCMAC{}, any supplied $\Xi\ge\chix$ permits
\[
\lambda_-={a}/{(3\Xi^2)},\qquad \lambda_+=2b,
\]
by Lemma~\ref{lem:joint-cheb} and the canonical-to-local bridge.  Wider certified intervals are also valid; their endpoint ratio determines the Chebyshev degree.

For the standard APAPC estimate, let $P$ be the Chebyshev-preconditioned normal operator, $p^\star=-\nabla\Phi(w^\star)\in\Range(P)$, and $y^\star=P^{\dagger/2}p^\star$. A sufficient supplied initial-energy bound is
\[
R_E^2\ge \|w^0-w^\star\|^2+\frac{\eta}{\theta}\|y^0-y^\star\|^2
+\frac{2\eta(1-\tau)}{\tau}D_\Phi(w_f^0,w^\star),
\qquad y^0=0,
\]
where $D_\Phi(a,b):=\Phi(a)-\Phi(b)-\langle\nabla\Phi(b),a-b\rangle$. This energy, rather than the primal radius alone, controls the convergence prefactor.

The complete outer iteration is Algorithm~\ref{alg:cross-apapc} in Section~\ref{sec:algorithm}; only its inner Chebyshev correction is repeated here because its affine-call count is used explicitly in the proof.

\subsection{Proof of Theorem~\ref{thm:algorithm}}
Fix the execution channels $M_1,\ldots,M_J$ and normalized stack specified above. Feasibility gives $v_j\in\Range(M_j)$; injectivity of $S_j$ there implies $S_j(M_jw-v_j)=0$ iff $M_jw-v_j=0$. Proposition~\ref{prop:multi-channel} gives
\begin{equation}
\kappa(\widehat K)\le\frac ba\,\chiJ^2.
\label{eq:khat-bound}
\end{equation}
For the canonical \CCMAC{} oracle, $J=2$, $M_j=K_j$, $\Phi=F$, and $\chi_{\times,2}\le\sqrt2\,\chix$. For the local implementation, $M_j=\mathcal L_j$; Proposition~\ref{prop:decentralized-bridge} gives $\chi_{\rm loc}\le\sqrt3\,\chix$, while Proposition~\ref{prop:multi-channel} then gives $\chi_{\times,2}^{\rm loc}\le\sqrt2\chi_{\rm loc}=O(\chix)$ and $\kappa_\Phi=\kappa_{\Phi_{\rm loc}}=O(\kappa_f)$. The zero-block branches use the reduced channels specified there.

Algorithm~\ref{alg:cross-apapc} is the affine-constrained accelerated method of \citet{salim2022optimal} for $\Phi$ and $\widehat Kw=\widehat v$. With the supplied joint interval its Chebyshev degree is $N=\lceil\sqrt{\lambda_+/\lambda_-}\rceil$. With the energy certificate above and $T=O(\sqrt{\kappa_\Phi}\log_+(R_E/\eps))$, it uses
\[
O\!\left(\sqrt{\kappa_\Phi}\log_+\frac{R_E}{\eps}\right)
\quad\text{gradients and}\quad
O\!\left(N\sqrt{\kappa_\Phi}\log_+\frac{R_E}{\eps}\right)
\]
normalized joint-stack products. The assumed ratio $O(\chiJ^2)$ gives the normalized affine bound in~\eqref{eq:sc-upper-compact}; in the two-channel \CCMAC{} specialization, using~\eqref{eq:normalized-joint-certificate} gives $N=O(\Xi)$.

Each stack action uses each normalized channel once. The supplied graph realization converts the same stack-product count into the communication bound in~\eqref{eq:sc-upper-compact} after summing the per-channel communication costs. In the local \CCMAC{} case each evaluation of $\nabla\Phi_{\rm loc}$ additionally uses $O(1)$ actions of $H,H^\top$ and $\widetilde W$; these are dominated by the explicit channel costs in Appendix~\ref{app:resources}. The gradient count refers to the original $\nabla f_i$. This proves Theorem~\ref{thm:algorithm}. \qed

\section{Smooth-SC Lower Bounds and Their Scope}\label{app:lower-bound-program}
\paragraph{Oracle and radius conventions.}
The lower bounds use a linear-span oracle with zero initial vector memories. New primal and dual vectors may be formed only by scalar linear combinations of stored vectors and the specified gradient, channel, adjoint-channel, and neighbor-communication operations. Access to matrix entries, arbitrary new basis vectors, exact factorizations, and unconstrained global projections is not included. Free local operations in a communication lower bound mean closure under these allowed local maps, not unrestricted coordinate generation.

For reference, if $\mathfrak P$ is an explicitly specified class and $r\in\{\nabla,\mathrm{aff},\mathrm{comm}\}$, define
\begin{equation}
\mathfrak C_r(\eps;\mathfrak P):=\inf_{\mathcal A}\sup_{\mathcal I\in\mathfrak P}N_r(\mathcal A,\mathcal I,\eps).
\label{eq:minimax-resource-def}
\end{equation}
The energy-controlled upper class of Theorem~\ref{thm:algorithm} and the primal-radius-normalized hard families below are not silently identified. The statements that follow isolate particular resource mechanisms and record the radius hypotheses used by each construction.

\begin{theorem}[Two-channel affine-oracle lower bound]\label{thm:affine-lower}
For every $N\ge4$, $\kappa_f\ge4$, and sufficiently small universal $\eps>0$, there is a smooth strongly convex one-agent CC-MAC instance, normalized so that $\|w^0-w^\star\|=1$, with two individually whitened channels and $\chix=\cot(\pi/(2N))=\Theta(N)$, requiring $\Omega(\sqrt{\kappa_f}\chix\log(1/\eps))$ composite normalized affine calls and $\Omega(\sqrt{\kappa_f}\log(1/\eps))$ gradients in the stated linear-span oracle.
\end{theorem}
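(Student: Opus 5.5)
The plan is to reduce to the known lower bound of \citet{salim2022optimal} for affine-constrained smooth strongly convex minimization: $\Omega(\sqrt{\kappa_f}\sqrt{\kappa(K)}\log(1/\eps))$ matrix multiplications and $\Omega(\sqrt{\kappa_f}\log(1/\eps))$ gradients. What remains is to realize a hard constraint matrix with $\sqrt{\kappa(K)}=\Theta(N)$ as a stack of two individually whitened channels whose Friedrichs factor is exactly $\cot(\pi/(2N))$. The gradient bound needs no channel structure. It follows from the standard Nesterov chain quadratic placed on a feasible affine subspace (or with trivial constraints), which has condition number $\kappa_f$ and $\|w^0-w^\star\|=1$. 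So the substance is the affine count.

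\textbf{Construction of the channels.} Start from a path-type operator in the spirit of Salim et al.: an incidence/difference matrix $K$ on a path of length $N$, whose Gram matrix $K^\top K$ is a path Laplacian with positive spectrum $\{2-2\cos(k\pi/N)\}$. Split its rows into odd and even edges. Each class consists of disjoint edges, so its rows are mutually orthogonal with equal norm. After scaling by $1/\sqrt2$ each channel satisfies $\widehat M_j^\top\widehat M_j=P_{U_j}$, i.e.\ it is exactly whitened, and $\widehat K^\top\widehat K=P_U+P_V$. By Lemma~\ref{lem:weighted-projectors} the spectrum is $\{1\pm\cos\theta_k\}$ over the principal angles. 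Comparing with the Laplacian spectrum $\tfrac12(2-2\cos(k\pi/N))=1-\cos(k\pi/N)$ identifies the principal cosines as $\cos(k\pi/N)$ on nontrivial planes. One checks directly that $U\cap V=\{0\}$, since the only common vector would be a constant, which lies in the kernel. Hence $c_F=\cos(\pi/N)$, and by Theorem~\ref{thm:angle}, $\chix^2=(1+c_F)/(1-c_F)=\cot^2(\pi/(2N))$. The exact index convention (path with $N$ versus $N\pm1$ edges) will be adjusted so that the formula matches the statement's $\cot(\pi/(2N))$. We embed this as a one-agent CC-MAC instance, with $n=1$ so there is no communication. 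Channel one consists of the $A/C$-type rows acting on $x$, and channel two of the $D$-type rows acting on $z$. Variables are split so that the odd edges sit in channel one and the even edges in channel two, using $B\neq0$ to let the coupled row reach the shared coordinates.

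\textbf{Span argument.} Take the objective $\Phi(w)=\tfrac{\mu}{2}\|w\|^2+\langle g,w\rangle$ plus a chain-structured term, following the duality-based instance of Salim et al. Its dual is a quadratic in $\widehat K^\top\widehat K$ with condition number $\Theta(\kappa_f\chix^2)$ in the relevant range. In the linear-span model, each composite normalized call (one $\widehat K$ or $\widehat K^\top$) advances the Krylov support along the path by one index, whereas a gradient call does not grow the dual support. A Nesterov-type chain argument on the dual then shows that after $k$ affine calls the error is bounded below by $q^{k}$, where $q=1-O(1/(\sqrt{\kappa_f}\chix))$. This yields $\Omega(\sqrt{\kappa_f}\chix\log(1/\eps))$ calls. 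The dimension $N$ is fixed by $\chix$, so a finite path only allows about $N$ steps of support growth. To get the $\log(1/\eps)$ factor we therefore tensor the path structure with an infinite (or sufficiently long) Nesterov chain in the objective. This is where the requirement $\eps$ sufficiently small, together with $\kappa_f\ge4$, is used.

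\textbf{Main obstacle.} The hardest step is combining the finite-$N$ constraint geometry with the long objective chain so that the two resources multiply rather than add. Concretely, I expect to need $\widehat K^\top\widehat K=P_U+P_V$ to appear in the dual Hessian as $\mu^{-1}$ times a block operator whose slowest Krylov mode decays at rate $1-\Theta(1/(\sqrt{\kappa_f}N))$. My first attempt would follow Salim et al.'s proof verbatim with $K$ replaced by the whitened two-channel stack. I would then verify two things: that their instance remains valid when $K$ is exactly this stack, and that normalizing to $\|w^0-w^\star\|=1$ only shifts constants.
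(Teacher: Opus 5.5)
Your top-level route is the paper's. The $2^{-1/2}$-scaled odd and even rows of the incidence matrix of the $N$-vertex path give two exactly whitened channels with $M^\top M=\tfrac12L_{P_N}$. Hence $c_F=\cos(\pi/N)$ and $\chix=\cot(\pi/(2N))$, and no index adjustment is needed. The paper embeds them in one agent with a dummy $x_1$, $A_1=C_1=0$, $B_1=M_{\rm odd}\otimes\mathrm{Id}$ and $D_1=M_{\rm even}\otimes\mathrm{Id}$. The hard variable must be the shared $z$, since the $D$ channel only sees $z$.

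Two small repairs are needed. First, argue $U\cap V=\{0\}$ spectrally: a common direction would be an eigenvector of $P_U+P_V=\tfrac12L_{P_N}$ with eigenvalue $2$, but the top eigenvalue is $1+\cos(\pi/N)<2$. Constants lie in the common kernel, not in the row spaces. Second, the gradient bound should come from the same instance, because the statement asks for one instance requiring both resources.

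The real problem is the step you call the main obstacle. It is left open, and the mechanism you sketch is not the one that works. The bound does not come from a dual quadratic of condition $\kappa_f\chix^2$. Also, gradients are exactly what advance the hard chain. Under a generic tensoring, such as the same chain at every path block, gradients would advance the chain locally and the two costs would only add, as you feared.

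The missing idea is that Salim et al.'s bound is Scaman et al.'s decentralized linear-graph bound transported through the Gram identity, and this transport depends only on $K^\top K$. Starting from zero dual memory, every dual vector is $K$ applied to a primal one. Therefore $K^\top\operatorname{Span}\{Kx^s,K\nabla F_N(x^s)\}=\operatorname{Span}\{W_Nx^s,W_N\nabla F_N(x^s)\}$, where $W_N:=K^\top K=\tfrac12L_{P_N}$ (tensored with the identity on the hard coordinate). So $T$ composite calls to your stack simulate $O(T)$ gossip steps with $W_N$.

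Now take $z=(z^1,\ldots,z^N)$, each block in the long hard-coordinate space, with the block-separable $F_N$ from Scaman et al.'s Theorem~2. The two alternating halves of the chain quadratic sit on the two end segments of the path. They are separated by a middle segment of length $\Theta(N)$ that carries only $\tfrac{\mu_f}{2}\|\cdot\|^2$.

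A new chain coordinate can be created by gradients only at the end opposite to where the previous one appeared. Before that, the information must cross $\Theta(N)$ edges, one per $W_N$ step. This is what makes the costs multiply. It yields $\Omega(\sqrt{\kappa_f/\gamma_N}\log(1/\eps))$ affine calls with $\gamma_N=\lambda_{\min}^+(W_N)/\lambda_{\max}(W_N)=\chix^{-2}$, together with $\Omega(\sqrt{\kappa_f}\log(1/\eps))$ gradients.

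Unit radius then follows from the rescaling $F_c(w)=c^2F(w/c)$, $v_c=cv$. This preserves $L_f$, $\mu_f$ and all row spaces.
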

We now prove Theorem~\ref{thm:affine-lower}. The construction is a two-coloring of the edges of a path. It is useful because each color class is a matching and therefore an isometric affine channel, whereas the union retains the full path spectrum.

\begin{lemma}[Alternating-matchings path geometry]
\label{lem:path-matching}
Let $E_N\in\R^{(N-1)\times N}$ be the oriented incidence matrix of the path $P_N$, with row $i$ equal to $e_i^\top-e_{i+1}^\top$. Let $E_{\rm odd}$ and $E_{\rm even}$ contain respectively the odd and even rows of $E_N$, and define
\[
M_{\rm odd}=2^{-1/2}E_{\rm odd},\qquad M_{\rm even}=2^{-1/2}E_{\rm even},\qquad
M=\begin{bmatrix}M_{\rm odd}\\M_{\rm even}\end{bmatrix}.
\]
Then every positive singular value of each $M_{\rm odd}$ and $M_{\rm even}$ equals one, while
\begin{equation}
M^\top M=\frac12L_{P_N},\qquad
\kappa(M)=\frac{1+\cos(\pi/N)}{1-\cos(\pi/N)}=\cot^2\frac{\pi}{2N}.
\label{eq:path-gram}
\end{equation}
Consequently the Friedrichs cosine of the two row spaces equals $\cos(\pi/N)$ and their cross factor is $\cot(\pi/(2N))$.
\end{lemma}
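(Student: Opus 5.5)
The plan is to verify the three claims of Lemma~\ref{lem:path-matching} in order: channelwise isometry, the Gram identity with its spectrum, and then the Friedrichs cosine, which I read off through the projector-sum spectrum of Lemma~\ref{lem:weighted-projectors} and Proposition~\ref{prop:multi-channel}.

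\textbf{Channelwise isometry.} Rows of $E_{\rm odd}$ are $e_{2k-1}-e_{2k}$. These have pairwise disjoint supports, since the odd edges form a matching. Hence $E_{\rm odd}E_{\rm odd}^\top=2I$ and $M_{\rm odd}M_{\rm odd}^\top=I$, so all singular values equal one. The same argument applies to the even edges $e_{2k}-e_{2k+1}$. It follows that $M_{\rm odd}^\top M_{\rm odd}=P_U$ and $M_{\rm even}^\top M_{\rm even}=P_V$ exactly, with $U,V$ the two row spaces; both channels are therefore exactly whitened.

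\textbf{Gram identity and spectrum.} The rows of $E_{\rm odd}$ and $E_{\rm even}$ together are all rows of $E_N$, so
\[
M^\top M=\tfrac12(E_{\rm odd}^\top E_{\rm odd}+E_{\rm even}^\top E_{\rm even})=\tfrac12E_N^\top E_N=\tfrac12L_{P_N}.
\]
I will quote the classical path Laplacian spectrum $2-2\cos(\pi k/N)$, $k=0,\ldots,N-1$, obtained from the cosine eigenvectors $\cos(\pi k(j-\tfrac12)/N)$. The nonzero eigenvalues of $M^\top M$ are then $1-\cos(\pi k/N)$ for $k\ge1$. The extremes are $1-\cos(\pi/N)$ and $1+\cos(\pi/N)$, the latter at $k=N-1$. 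This gives the stated $\kappa(M)$, and the half-angle identity $(1+\cos x)/(1-\cos x)=\cot^2(x/2)$ finishes it.

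\textbf{Friedrichs cosine.} First, $U\cap V=\{0\}$. The $N-1$ incidence rows are linearly independent, because the path is a tree. So $U$ and $V$ are spanned by disjoint subsets of one independent set, and their intersection is trivial. Next, $P_U+P_V=M^\top M$, and by Lemma~\ref{lem:weighted-projectors} (with $a=b=1$) its positive spectrum on $U+V$ consists of $1$ on exclusive directions and $1\pm\cos\theta$ on principal planes. The smallest positive eigenvalue is therefore $1-c_F$; equivalently, in the irredundant case Theorem~\ref{thm:angle} gives $\kappa=(1+c_F)/(1-c_F)$. Matching this with $1-\cos(\pi/N)$ gives $c_F=\cos(\pi/N)$, and hence $\chix=\cot(\pi/(2N))$.

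\textbf{Main obstacle.} The delicate step is the identification of $c_F$ for $N\ge4$. It requires that the extreme values $1\pm\cos(\pi/N)$ genuinely come from a principal plane rather than from some other part of the spectrum. This holds because there are no common directions, and every positive eigenvalue differing from $1$ must lie on a principal plane, where it is $1\pm\cos\theta$. The minimum $1-\cos(\pi/N)<1$ therefore pins $\cos\theta_{\max}=\cos(\pi/N)$.
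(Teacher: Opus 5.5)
Your proposal is correct and follows the paper's proof essentially step for step. The shared steps are: disjoint-support rows make each channel exactly whitened; stacking gives $\tfrac12 L_{P_N}$ with the path spectrum $1\mp\cos(\pi/N)$ at the extremes; and the irredundant projector-sum spectrum, $(1+c_F)/(1-c_F)$ or equivalently the smallest positive eigenvalue $1-c_F$, forces $c_F=\cos(\pi/N)$. Your explicit check that $U\cap V=\{0\}$, via linear independence of the tree's incidence rows, is a worthwhile addition: the exact-whitening identity of Theorem~\ref{thm:angle} is stated only in the irredundant case, and the paper invokes it without verifying that hypothesis.
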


\begin{proof}
Within either parity class no two path edges share a vertex. Hence the nonzero rows of $E_{\rm odd}$ (and of $E_{\rm even}$) are mutually orthogonal and each has squared norm $2$. After multiplication by $2^{-1/2}$ the rows are orthonormal, so $M_jM_j^\top=\mathrm{Id}$ on its row space and every positive singular value is one.

Stacking the two parity classes merely permutes the rows of $2^{-1/2}E_N$, hence $M^\top M=E_N^\top E_N/2=L_{P_N}/2$. The path Laplacian has eigenvalues
\[
2-2\cos\frac{k\pi}{N},\qquad k=0,1,\ldots,N-1.
\]
Therefore the smallest positive and largest eigenvalues of $M^\top M$ are $1-\cos(\pi/N)$ and $1+\cos(\pi/N)$, respectively, which proves \eqref{eq:path-gram}. Since the two channels are individually whitened, the exact-whitening identity in Theorem~\ref{thm:angle} gives
\[
\frac{1+c_F}{1-c_F}=\kappa(M)=\frac{1+\cos(\pi/N)}{1-\cos(\pi/N)}.
\]
The map $c\mapsto(1+c)/(1-c)$ is injective on $[0,1)$, so $c_F=\cos(\pi/N)$ and $\chix=\cot(\pi/(2N))$.
\end{proof}

\subsection{Proof of Theorem~\ref{thm:affine-lower}}
Fix $N\ge4$. Use the smooth strongly convex hard functions on a linear graph from the proof of Theorem 2 of \citet{scaman2017optimal}, with local condition number $\kappa_f$. Their coordinate-tail argument (before the final conversion to function error) gives a product objective $F_N$ for which every black-box procedure using the path gossip operator
\[
W_N:=\frac12L_{P_N}
\]
requires
\begin{equation}
\Omega\!\left(\sqrt{\kappa_f}\,\frac1{\sqrt{\gamma_N}}\log\frac1\eps\right)
\quad\text{applications of }W_N,
\qquad
\gamma_N:=\frac{\lambda_{\min}^+(W_N)}{\lambda_{\max}(W_N)},
\label{eq:scaman-path-lb}
\end{equation}
and $\Omega(\sqrt{\kappa_f}\log(1/\eps))$ gradient computations. For the path spectrum above,
\[
\gamma_N=\frac{1-\cos(\pi/N)}{1+\cos(\pi/N)}=\chix^{-2}.
\]

Embed this problem into a one-agent \CCMAC{} instance. Introduce a dummy scalar local variable $x_1$ with the decoupled quadratic $\mu_f x_1^2/2$, set $A_1=C_1=0$, and let the shared variable $z$ equal the product variable of $F_N$. Take $B_1=M_{\rm odd}\otimes\mathrm{Id}$, $D_1=M_{\rm even}\otimes\mathrm{Id}$, and zero right-hand sides. The dummy block does not change the condition number or the oracle lower bound. (A finite hard-coordinate dimension large enough for the requested accuracy is sufficient for the standard resisting-oracle construction.) Thus the two nonzero affine channels are precisely
\[
K_1=M_{\rm odd}\otimes\mathrm{Id},\qquad K_2=M_{\rm even}\otimes\mathrm{Id},
\]
and Lemma~\ref{lem:path-matching} gives their individual condition numbers and cross factor.

It remains to match the oracle classes. Let $K=[K_1;K_2]$ and $b=0$. In the black-box first-order (BBFO) linear-span model of \citet{salim2022optimal}, every primal vector that can be generated using $K$ and $K^\top$ belongs to spans containing terms of the form
\[
K^\top Kx=W_Nx,\qquad K^\top K\nabla F_N(x)=W_N\nabla F_N(x),
\]
(up to the identity on the hard coordinate). More explicitly,
\[
K^\top\operatorname{Span}\{Kx^s,K\nabla F_N(x^s):s\le t\}
=\operatorname{Span}\{W_Nx^s,W_N\nabla F_N(x^s):s\le t\}.
\]
Thus the exact span-reduction used in the proof of Theorem 1 of \citet{salim2022optimal} applies verbatim: a BBFO procedure with $T$ composite $K/K^\top$ oracle calls induces a black-box procedure using at most a constant multiple of $T$ applications of $W_N$. Equation~\eqref{eq:scaman-path-lb} therefore implies
\[
T=\Omega\!\left(\sqrt{\kappa_f}\,\chix\log\frac1\eps\right).
\]
Because a composite oracle is at least as informative per charged call as querying either channel separately, the same lower bound holds for the total number of separately charged channel matrix--vector calls. The gradient lower bound is inherited from the same Scaman construction. The unit-radius normalization in the theorem is without loss: for any quadratic hard instance with solution $w^\star$, the transformation
\[
F_c(w):=c^2F(w/c),\qquad v_c:=cv
\]
leaves $L_f$, $\mu_f$, all channel row spaces, and all matrix/network condition numbers unchanged while scaling the optimizer and every oracle-generated primal span by $c$. Choosing $c=1/\|w^0-w^\star\|$ therefore gives unit initial radius and replaces absolute accuracy by the corresponding relative accuracy. This proves Theorem~\ref{thm:affine-lower}. \qed

\begin{theorem}[Radius-explicit decentralized communication lower bound]\label{thm:comm-lower}
For the path family constructed below, let $t>0$, $r=\exp[-\operatorname{arcosh}(1+3/(\kappa_ft^2))]$, $E_I=(1-r)(3+r)/2$, and let the three physical groups have size $m_{\rm grp}$. After normalizing the full execution-space radius to one, every legal linear-span method satisfies
\begin{equation}
N_{\rm comm}\ge(m_{\rm grp}+1)\left[\frac{1}{-\log r}
\log\!\left(\frac{\sqrt{3/(3+t^2E_I)}}{\eps}\right)-1\right]_+.
\label{eq:target-comm-lb}
\end{equation}
This construction displays the cross-angle/network propagation mechanism, but the prefactor depends on $t$ and is not asserted as a uniform $\Omega(\chix\sqrt{\kappa_f\kappa_W}\log(1/\eps))$ bound at fixed absolute accuracy.
\end{theorem}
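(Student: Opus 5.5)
The plan is to adapt the coordinate-propagation argument of \citet[Thm.~2]{scaman2017optimal} to a path whose communication cost per new coordinate is the group separation $m_{\rm grp}+1$. The geometric decay rate $r$ of the hard solution then controls how many coordinates must be unlocked to reach accuracy $\eps$.

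\textbf{Construction.} Take a path of $3m_{\rm grp}$ agents split into three consecutive physical groups of size $m_{\rm grp}$. Only the two outer groups carry objective terms; the middle group carries $\tfrac{\mu}{2}\|\cdot\|^2$ only. On the shared variable I would place the Nesterov-type tridiagonal quadratic. Its odd-indexed coupling terms $(w_{2k-1}-w_{2k})^2$ sit at the left group, and its even-indexed terms $(w_{2k}-w_{2k+1})^2$ sit at the right group. The strong-convexity weight is tuned so that the infinite-dimensional minimizer is $w^\star_k=r^k$ with $r$ the smaller root of the characteristic equation. The condition $r+r^{-1}=2+6/(\kappa_ft^2)$ is equivalent to $r=\exp[-\operatorname{arcosh}(1+3/(\kappa_ft^2))]$. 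Here $t$ is the free parameter trading the effective condition number $\kappa_ft^2/3$ against the cross/network budget. The two groups realize the two CC-MAC channels: the odd and even incidence rows give the alternating matchings of Lemma~\ref{lem:path-matching}, and consensus across the path is the $\Wp_q$ channel. The relevant quantities are $\kappa_f$, $\chix$ and $\kappa_W$.

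\textbf{Span propagation.} Starting from zero memories, I will show that in the linear-span oracle of this appendix a new coordinate $e_{k+1}$ enters a node's span only via a local gradient or channel action at the group owning the $(k,k+1)$ coupling. That vector must then be carried to the opposite group before coordinate $k+2$ can be created. Local gradients, channel actions and ``free local operations'' never move support between groups. A neighbor round moves it by one hop, so each alternation costs at least $m_{\rm grp}+1$ rounds. By induction, after $N_{\rm comm}$ rounds every stored vector is supported on the first $k\le N_{\rm comm}/(m_{\rm grp}+1)+1$ coordinates. This is the step I expect to be the main obstacle. It requires a careful check that the channel and adjoint-channel maps of the normalized oracle, including the graph-filtered and whitened blocks from Appendix~\ref{app:ccmac-normalization}, are themselves local in the group sense and do not shortcut the path. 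If some normalizer is global, I would instead charge its neighbor rounds explicitly and restrict it to graph polynomials.

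\textbf{Error bound and normalization.} Any output supported on the first $k$ coordinates has error at least $\|(w^\star_j)_{j>k}\|\ge r^{k}\,\|w^\star\|\sqrt{1-r^2}$, up to the auxiliary components. I will compute the full execution-space initial radius, including the dual/auxiliary and dummy components, as $\|w^\star\|^2\propto(3+t^2E_I)/3$ with $E_I=(1-r)(3+r)/2$ arising from the geometric sums. After scaling it to one, as in the rescaling step of Theorem~\ref{thm:affine-lower}, the prefactor becomes $\sqrt{3/(3+t^2E_I)}$. Requiring $r^k\sqrt{3/(3+t^2E_I)}\le\eps$ forces $k\ge\log(\sqrt{3/(3+t^2E_I)}/\eps)/(-\log r)$. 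Combining this with $N_{\rm comm}\ge(m_{\rm grp}+1)(k-1)$ and taking positive parts yields \eqref{eq:target-comm-lb}. Finite truncation at a dimension exceeding the needed $k$ is handled exactly as in the affine lower bound.
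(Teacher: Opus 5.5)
Your propagation induction and your conversion from tail decay to a round count match the paper. The construction, however, cannot give the statement for all $t>0$.

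You put the alternating tridiagonal coupling into the local objectives. The effective curvature $L_f t^2$ in $\frac{L_f t^2}{2}\langle u,P_j u\rangle$ must then sit inside a single $f_i$, which is only $(L_ft^2+\mu_f)$-smooth. Each of the $3m_{\rm grp}$ agents must also contribute at least $\frac{\mu_f}{2}\|u\|^2$. So you can only realize $\delta=3\mu_f/c$ with $c\le L_f-\mu_f$, i.e.\ $t^2\le 1-1/\kappa_f$. The regime $t\ge1$, where the bound actually carries the cross factor $\chix=\sqrt{1+t^2}+t$, is therefore outside the admissible class.

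There are two further consequences. In your instance the only nonempty affine channel is consensus, so $\chix=1$ and $t$ is unrelated to any angle. (Reading the odd/even rows as homogeneous channels instead would force $u$ to be constant rather than penalize it.) Also, nothing in your execution space produces the $t^2E_I$ term of the radius; ``dual/auxiliary and dummy components'' is not a source for it. As a smaller point, for $w^\star_k=r^k$ the tail is exactly $r^k\|w^\star\|$. The extra $\sqrt{1-r^2}$ you write would weaken the logarithm, and you then drop it without comment.

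The missing idea is to route the matchings through the cross-coupled equality. Give every $i\in V_1$ (resp.\ $V_3$) a local variable with the link $x_i=tR_1u$ (resp.\ $x_i=tR_2u$). Realize it as $A_i=J_i$, $B_i=-tJ_iR_j$ with mutually orthogonal output injections $J_i$. Take $f_i=\frac{L_f}{2}\|x_i-\xi e_1\|^2+\frac{\mu_f}{2}\|u\|^2$ on $V_1$, $\frac{L_f}{2}\|x_i\|^2+\frac{\mu_f}{2}\|u\|^2$ on $V_3$, and $\frac{\mu_f}{2}\|u\|^2$ on $V_2$. This has four effects.

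\begin{itemize}
\item Every $f_i$ is $L_f$-smooth for all $t$. Eliminating $x_i$ on the feasible set gives $m_{\rm grp}\bigl[\frac{3\mu_f}{2}\|u\|^2+\frac{L_ft^2}{2}\langle u,(P_1+P_2)u\rangle-L_ft\xi u_1\bigr]$, hence $\delta=3/(\kappa_ft^2)$.
\item With $\xi$ tuned, $u^\star=\sqrt{1-r^2}(1,r,r^2,\ldots)$, whose tail beyond $s$ coordinates is exactly $r^s$.
\item The link channel has $\mathcal K_1\mathcal K_1^\top=(1+t^2)\mathrm{Id}$ and Friedrichs cosine $t/\sqrt{1+t^2}$ against consensus.
\item The linked variables $x_i^\star=tR_ju^\star$ contribute $m_{\rm grp}t^2\langle u^\star,(P_1+P_2)u^\star\rangle=m_{\rm grp}t^2E_I$ to $R_{\rm full}^2=m_{\rm grp}(3+t^2E_I)$. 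This is exactly where the prefactor $\sqrt{3/(3+t^2E_I)}$ comes from.
\end{itemize}

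This representation also settles your locality worry. The aggregate equality is already a direct sum of locally owned links, so no auxiliary localization variable or global normalizer enters. Grant $V_1$ free access to $P_1$ and $V_3$ free access to $P_2$; this covers everything obtainable from $R_j,R_j^\top$ and the quadratic gradients. Your alternating-frontier induction then runs verbatim.
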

\subsection{Radius-explicit decentralized communication lower bound}
We now prove Theorem~\ref{thm:comm-lower}. The construction adapts the alternating-coordinate propagation argument of \citet[Appendix C]{yarmoshik2025coupled}, but the bad affine parameter is generated by the angle between a local cross-link channel and the shared-variable consensus channel. In particular, the local cross-link channel itself has condition number one.

\paragraph{Alternating partial isometries.}
Work first on the hard coordinate space $\ell_2(\mathbb N)$. Define $R_1,R_2$ by
\begin{align}
(R_1u)_1&=u_1, & (R_1u)_{k+1}&=\frac{u_{2k}-u_{2k+1}}{\sqrt2},\quad k\ge1,\label{eq:R1-def}\\
(R_2u)_k&=\frac{u_{2k-1}-u_{2k}}{\sqrt2},&&\quad k\ge1.\label{eq:R2-def}
\end{align}
The rows within each operator have disjoint supports and unit norm, hence
\begin{equation}
R_1R_1^\top=\mathrm{Id},\qquad R_2R_2^\top=\mathrm{Id}.
\label{eq:R-isometry}
\end{equation}
Let $P_j=R_j^\top R_j$. For $k\ge2$,
\begin{equation}
[(P_1+P_2)u]_k=u_k-\frac12u_{k-1}-\frac12u_{k+1},
\label{eq:P-interior}
\end{equation}
while the first coordinate is $\frac32u_1-\frac12u_2$. Thus $P_1$ and $P_2$ are complementary matching projectors whose union is a one-dimensional zero chain.

\paragraph{A legal cross-coupled instance.}
Let the physical graph be the path on $n=3m_{\rm grp}$ agents and partition its consecutive vertices into $V_1,V_2,V_3$, each of size $m_{\rm grp}$. For $i\in V_1$ introduce a local variable $x_i$ and the affine relation
\begin{equation}
x_i=tR_1u,
\label{eq:link-v1}
\end{equation}
while for $i\in V_3$ use $x_i=tR_2u$. Agents in $V_2$ have no hard local variable (equivalently, one may add a decoupled dummy scalar if strictly positive local dimensions are required). These relations are a special case of the original cross-coupled equality: take the coupled output space to be the direct sum of one copy of the row space for every active agent, let $J_i$ inject into the $i$th summand, and set
\begin{equation}
A_i=J_i,\qquad B_i=-tJ_iR_1\quad(i\in V_1),
\qquad
A_i=J_i,\qquad B_i=-tJ_iR_2\quad(i\in V_3),
\label{eq:hard-A-B}
\end{equation}
with zero blocks on $V_2$. Because the ranges of the $J_i$ are mutually orthogonal, the aggregate equality is equivalent to the local links \eqref{eq:link-v1} and their $V_3$ analogues. Hence this is a core \CCMAC{} instance with $C_i=D_i=0$ and nonzero cross blocks $B_i$. For this subclass the aggregate output is already a direct sum of locally owned blocks, so the generic auxiliary localization variable used elsewhere in the paper is unnecessary; adding it would only enlarge the oracle available to the algorithm and is therefore omitted in the lower-bound representation.

After introducing local copies $u_i$ of the shared variable, the first affine channel is
\begin{equation}
\mathcal K_1:(x,\bm u)\mapsto
\col\bigl(x_i-tR_{g(i)}u_i\bigr)_{i\in V_1\cup V_3},
\qquad g(i)=\begin{cases}1,&i\in V_1,\\2,&i\in V_3,\end{cases}
\label{eq:hard-K1}
\end{equation}
and the second channel is the consensus operator $\mathcal K_2=(W\otimes\mathrm{Id})$ acting on $\bm u$. Equation~\eqref{eq:R-isometry} gives
\begin{equation}
\mathcal K_1\mathcal K_1^\top=(1+t^2)\mathrm{Id},
\qquad \kappa(\mathcal K_1)=1.
\label{eq:hard-K1-cond}
\end{equation}
Let $U=\Range(\mathcal K_1^\top)$ and let $V$ be the row space of the consensus channel; for a connected graph, $V$ is precisely the block-disagreement subspace in the $u$ coordinates. Every $a$ in the row-coefficient space of $\mathcal K_1$ satisfies
\[
\|\mathcal K_1^\top a\|^2=(1+t^2)\|a\|^2,
\qquad
\|P_V\mathcal K_1^\top a\|\le t\|a\|.
\]
Equality is attained by choosing two agents in the same outer group with opposite row coefficients. Therefore
\begin{equation}
\cF(U,V)=\frac{t}{\sqrt{1+t^2}},
\qquad
\chix=\sqrt{\frac{1+\cF(U,V)}{1-\cF(U,V)}}=\sqrt{1+t^2}+t.
\label{eq:hard-cross-angle-proof}
\end{equation}
Separate left preconditioning of either channel does not change these row spaces, by Lemma~\ref{lem:rowspace-invariance}.

\paragraph{The hard objective and its geometric tail.}
If $t=0$, then $\chix=1$ and the standard consensus hard block used below already proves the claimed order. Hence it remains to treat $t>0$. On the cross block let the active agents use
\begin{equation}
f_i^\times(x_i,u)=
\begin{cases}
\frac{L_f}{2}\|x_i-\xi e_1\|^2+\frac{\mu_f}{2}\|u\|^2,&i\in V_1,\\
\frac{L_f}{2}\|x_i\|^2+\frac{\mu_f}{2}\|u\|^2,&i\in V_3,
\end{cases}
\label{eq:hard-local-objective}
\end{equation}
and let agents in $V_2$ contribute only $\mu_f\|u\|^2/2$. These functions are $L_f$-smooth and $\mu_f$-strongly convex in their existing variables. On the feasible set, after consensus and elimination of the $x_i$, the cross block equals, up to the positive factor $m_{\rm grp}$ and an irrelevant constant,
\begin{equation}
\phi_\times(u)=\frac{3\mu_f}{2}\|u\|^2
+\frac{L_ft^2}{2}\langle u,(P_1+P_2)u\rangle
-L_ft\xi u_1.
\label{eq:hard-effective-objective}
\end{equation}
Set
\begin{equation}
\delta:=\frac{3\mu_f}{L_ft^2}=\frac{3}{\kappa_ft^2},
\qquad
\alpha:=\operatorname{arcosh}(1+\delta),
\qquad
r:=e^{-\alpha}\in(0,1).
\label{eq:hard-r}
\end{equation}
For $c_r:=\sqrt{1-r^2}$ choose
\begin{equation}
\xi=\frac{c_r}{L_ft}\left(3\mu_f+\frac{L_ft^2}{2}(3-r)\right).
\label{eq:hard-xi}
\end{equation}
Then the unique minimizer of \eqref{eq:hard-effective-objective} is
\begin{equation}
u^\star=c_r(1,r,r^2,\ldots),\qquad \|u^\star\|=1.
\label{eq:hard-minimizer}
\end{equation}
Indeed, the first optimality equation is exactly enforced by \eqref{eq:hard-xi}; for every $k\ge2$, \eqref{eq:P-interior} reduces the optimality condition to
\[
(1+\delta)u_k-\frac12u_{k-1}-\frac12u_{k+1}=0,
\]
which holds because $(r+r^{-1})/2=\cosh\alpha=1+\delta$. The linked local variables contribute to the full execution-space radius. Direct evaluation gives
\[
E_I:=\langle u^\star,(P_1+P_2)u^\star\rangle=\frac{(1-r)(3+r)}2,
\qquad R_{\rm full}^2=m_{\rm grp}(3+t^2E_I).
\]
After scaling the complete execution vector to unit radius, any vector supported on the first $s$ hard coordinates has
\begin{equation}
\|w-w^\star\|\ge \sqrt{\frac{3}{3+t^2E_I}}\,r^s.
\label{eq:hard-tail}
\end{equation}

\paragraph{Propagation through the physical path.}
We may only strengthen the algorithm when proving a lower bound. Thus, between communication rounds, give every node in $V_1$ free access to the projector $P_1$, every node in $V_3$ free access to $P_2$, and every node arbitrary local linear algebra and gradients. This oracle contains all hard-coordinate information obtainable from the original local maps $R_j,R_j^\top$ and the quadratic gradients: returning from the local $x_i$ space to the shared coordinate can only produce $R_j^\top R_j=P_j$, together with the fixed seed $e_1$ in $V_1$.

Starting from the zero span (the standard resisting-oracle reduction handles arbitrary fixed initial vectors), let $s(N_{\rm round})$ be the largest hard-coordinate index that can occur at any node after $N_{\rm round}$ neighbor communication rounds. The matching structure gives
\begin{equation}
P_1\operatorname{span}\{e_1,\ldots,e_s\}\subseteq
\begin{cases}
\operatorname{span}\{e_1,\ldots,e_{s+1}\},&s\text{ even},\\
\operatorname{span}\{e_1,\ldots,e_s\},&s\text{ odd},
\end{cases}
\label{eq:P1-support}
\end{equation}
and the two cases are reversed for $P_2$. Hence creating coordinate $2$ requires the seed from $V_1$ to reach $V_3$; creating coordinate $3$ then requires the new information to return to $V_1$, and so on. We give the propagation induction explicitly.

For $s\ge1$, let $T_s$ be the first communication round after which some node can contain a vector with a nonzero $s$th hard coordinate. The linear term in \eqref{eq:hard-local-objective} and one free local return through $R_1^\top$ give $T_1=0$ in $V_1$. Suppose coordinate $s$ is first exposed. If $s$ is odd, then the boundary pair $(s,s+1)$ belongs to the $P_2$ matching, whereas $P_1$ preserves $\operatorname{span}\{e_1,\ldots,e_s\}$; if $s$ is even the roles are reversed. Thus coordinate $s+1$ can be created only in the outer group opposite to the one whose local matching created coordinate $s$. Before the information containing coordinate $s$ reaches that opposite group, arbitrary local linear combinations, gradients, and applications of the available projector remain inside $\operatorname{span}\{e_1,\ldots,e_s\}$ by \eqref{eq:P1-support}. The closest vertices of $V_1$ and $V_3$ are $m_{\rm grp}+1$ physical edges apart, and one communication round moves information across at most one edge. Consequently
\[
T_{s+1}\ge T_s+(m_{\rm grp}+1).
\]
Induction gives $T_s\ge(s-1)(m_{\rm grp}+1)$. Therefore, after $N_{\rm round}$ neighbor communication rounds, every locally available hard-coordinate vector is supported on the first
\begin{equation}
s(N_{\rm round})\le 1+\left\lfloor\frac{N_{\rm round}}{m_{\rm grp}+1}\right\rfloor
\label{eq:hard-propagation}
\end{equation}
coordinates. This is the same alternating-frontier mechanism as the propagation lemma of \citet[Appendix C]{yarmoshik2025coupled}, written here for the cross-link/consensus pair.

Combining \eqref{eq:hard-tail} and \eqref{eq:hard-propagation}, an $\eps$-accurate output must satisfy
\begin{equation}
N_{\rm round}\ge(m_{\rm grp}+1)\left[\frac{1}{-\log r}
\log\!\left(\frac{\sqrt{3/(3+t^2E_I)}}{\eps}\right)-1\right]_+.
\label{eq:q-before-bound}
\end{equation}
Since $-\log r=\operatorname{arcosh}(1+\delta)$ and $\cosh x\ge1+x^2/2$,
\begin{equation}
-\log r\le\sqrt{2\delta}=\sqrt{\frac{6}{\kappa_ft^2}}.
\label{eq:r-rate}
\end{equation}
Thus the propagation rate contains the factor $m_{\rm grp}t\sqrt{\kappa_f}$, but the logarithmic term retains the full-radius prefactor in \eqref{eq:q-before-bound}; it is not uniform in $t$ at fixed absolute accuracy.
For the unweighted physical path,
\begin{equation}
\kappa_W=\frac{1+\cos(\pi/n)}{1-\cos(\pi/n)}
=\cot^2\frac{\pi}{2n},
\qquad
m_{\rm grp}=\Theta(\sqrt{\kappa_W}).
\label{eq:physical-path-kappa}
\end{equation}
Hence the path contributes $m_{\rm grp}=\Theta(\sqrt{\kappa_W})$ to the radius-explicit lower bound in \eqref{eq:q-before-bound}.

To cover the regime $t<1$ without changing the prescribed angle, append an independent standard consensus hard block to the shared variable and to each local objective, as in \citet{scaman2017optimal}. The cross-link channel acts only on the first block, while the consensus channel acts on both, so \eqref{eq:hard-cross-angle-proof} is unchanged. The direct-sum objective remains $L_f$-smooth and $\mu_f$-strongly convex, and the standard block requires
\begin{equation}
\Omega\!\left(\sqrt{\kappa_f\kappa_W}\log\frac1\eps\right)
\label{eq:standard-consensus-block}
\end{equation}
communications. Every accurate output must solve both blocks. The relation
\begin{equation}
\chix=\sqrt{1+t^2}+t\le(1+\sqrt2)\max\{1,t\},
\label{eq:chi-vs-t}
\end{equation}
shows how $t$ controls the cross factor, but it does not remove the $t$-dependent full-radius prefactor. Equation~\eqref{eq:q-before-bound} is precisely \eqref{eq:target-comm-lb} and proves Theorem~\ref{thm:comm-lower}. Finite-dimensional instances are obtained by truncating the hard coordinate after more coordinates than any method can expose before the claimed lower-bound horizon, exactly as in standard zero-chain lower bounds. \qed

\paragraph{What these lower bounds establish.}
The affine construction and the ordinary unconstrained subclass establish the two lower orders recorded in Theorem~\ref{thm:minimax-sc}. The communication family establishes alternating physical propagation together with the radius-explicit dependence in Theorem~\ref{thm:comm-lower}. A uniform product lower bound over a class controlling only the primal radius would additionally require a hard family whose complete execution-space radius remains uniformly controlled as $t$ varies; that stronger conclusion is not used here.
\section{Smooth-Convex Log-Free Continuation and Matching Lower Bounds}
\label{app:smooth-convex-proof}
\paragraph{Accuracy class and multiplier scale.}
Assume $F$ is convex with $L_f$-Lipschitz gradient, $\|w^0-w^\star\|\le R$, and choose the minimum-norm multiplier $\lambda^\star\in\Range(\bar K)$. Define
\begin{equation}
\Lambda_{\rm cvx}:=\frac{\sigma_{\min}^+(\bar K)\|\lambda^\star\|}{L_fR},
\qquad
F(\bar w)-F^\star\le\eps,\quad
\frac{L_fR}{\sigma_{\min}^+(\bar K)}\|\bar K\bar w-\bar v\|\le\eps.
\label{eq:cvx-accuracy}
\end{equation}
When $\bar K=0$ the feasibility condition is omitted and $\Lambda_{\rm cvx}=0$. The class fixes $L_f,R$, the geometric budgets $\chix\le\Xi$, $\kappa_W\le\Omega$, and $\Lambda_{\rm cvx}\le\bar\Lambda$. Set $T_{\rm cvx}=\sqrt{L_fR^2/\eps}$.

The continuation construction below avoids a multiplicative target-accuracy logarithm in the affine and communication ledgers.  Throughout, $\bar K$ denotes the fixed separately normalized joint equality operator, $\mathcal A:=\{w:\bar Kw=\bar v\}\ne\varnothing$, $\mathcal N:=\Range(\bar K^\top)$, $Q:=\bar K^\top \bar K$, and
\begin{equation}
0<\ell\le\lambda_{\min}^+(Q),\qquad \lambda_{\max}(Q)\le \lambda_{\rm hi},
\qquad \widehat\chi:=\sqrt{\lambda_{\rm hi}/\ell}.
\label{eq:cvxlf-cert}
\end{equation}
The supplied normalization certificates of the main text give $\widehat\chi=O(\chix)$ instancewise and $\widehat\chi=O(\Xi)$ on the class $\chix\le\Xi$.  Let $L:=L_f$, $\sigma:=\sigma_{\min}^+(\bar K)$, and assume $0<\eps\le LR^2$ and $\Lambda_{\rm cvx}\le\bar\Lambda$.

\subsection{Executable filters and one regularized stage}
Let $T_j,U_j$ be Chebyshev polynomials of the first and second kind and put $d_{\rm RF}=d_{\rm UF}:=\lceil\widehat\chi\rceil$.  For $\lambda_{\rm hi}>\ell$ define
\begin{equation}
r_{d_{\rm RF}}(t):=\frac{T_{d_{\rm RF}}((\lambda_{\rm hi}+\ell-2t)/(\lambda_{\rm hi}-\ell))}{T_{d_{\rm RF}}((\lambda_{\rm hi}+\ell)/(\lambda_{\rm hi}-\ell))},
\qquad
s_{d_{\rm UF}}(t):=\frac{U_{d_{\rm UF}}(1-t/\lambda_{\rm hi})}{d_{\rm UF}+1},
\label{eq:cvxlf-polys}
\end{equation}
and for $\lambda_{\rm hi}=\ell$ use $r_1(t)=1-t/\lambda_{\rm hi}$.  Standard Chebyshev identities give
\begin{equation}
r_{d_{\rm RF}}(0)=s_{d_{\rm UF}}(0)=1,\quad |r_{d_{\rm RF}}(t)|<\tfrac12\ (t\in[\ell,\lambda_{\rm hi}]),\quad
|s_{d_{\rm UF}}(t)|\le1,\quad \sqrt t\,|s_{d_{\rm UF}}(t)|\le\sqrt\ell.
\label{eq:cvxlf-filter-bounds}
\end{equation}
Indeed, $\operatorname{arcosh}((\lambda_{\rm hi}+\ell)/(\lambda_{\rm hi}-\ell))\ge2/\widehat\chi$, so the denominator defining $r_{d_{\rm RF}}$ is at least $\cosh2$; for $s_{d_{\rm UF}}$, write $\cos\theta=1-t/\lambda_{\rm hi}$ and use $U_{d_{\rm UF}}(\cos\theta)=\sin((d_{\rm UF}+1)\theta)/\sin\theta$.  Consequently, with
\begin{equation}
P:=\mathrm{Id}-r_{d_{\rm RF}}(Q),
\qquad \tfrac12\Pi_{\mathcal N}\preceq P\preceq\tfrac32\Pi_{\mathcal N},
\qquad \Ker(P)=\Ker(\bar K).
\label{eq:cvxlf-P}
\end{equation}
Define the residual $\mathcal R_{\bar v}(x):=\bar K^\top(\bar Kx-\bar v)$.

\begin{algorithm}[H]\footnotesize
\caption{First-kind residual filter $\RF$}\label{alg:cvx-rf}
\begin{algorithmic}[1]
\Require $x,\bar K,\bar v,\ell,\lambda_{\rm hi}$, $d_{\rm RF}=\lceil\sqrt{\lambda_{\rm hi}/\ell}\rceil$
\If{$\lambda_{\rm hi}=\ell$}\State \Return $x-\mathcal R_{\bar v}(x)/\lambda_{\rm hi}$\EndIf
\State $c\gets(\lambda_{\rm hi}+\ell)/2$, $h\gets(\lambda_{\rm hi}-\ell)/2$, $a\gets c/h$
\State $x_-\gets x$, $x_c\gets x-\mathcal R_{\bar v}(x)/c$, $\rho\gets1/a$
\For{$j=1,\ldots,d_{\rm RF}-1$}
\State $\rho_+\gets(2a-\rho)^{-1}$
\State $x_+\gets x_c+\rho_+\rho(x_c-x_-)-(2\rho_+/h)\mathcal R_{\bar v}(x_c)$
\State $(x_-,x_c,\rho)\gets(x_c,x_+,\rho_+)$
\EndFor
\State \Return $x_c$
\end{algorithmic}
\end{algorithm}

\begin{algorithm}[H]\footnotesize
\caption{Second-kind weighted filter $\UF$}\label{alg:cvx-uf}
\begin{algorithmic}[1]
\Require $x,\bar K,\bar v,\lambda_{\rm hi}$, integer $d_{\rm UF}\ge1$
\State $x_-\gets x$, $x_c\gets x-\mathcal R_{\bar v}(x)/\lambda_{\rm hi}$
\For{$j=1,\ldots,d_{\rm UF}-1$}
\State $x_+\gets\dfrac{2(j+1)}{j+2}(x_c-\mathcal R_{\bar v}(x_c)/\lambda_{\rm hi})-\dfrac{j}{j+2}x_-$
\State $(x_-,x_c)\gets(x_c,x_+)$
\EndFor
\State \Return $x_c$
\end{algorithmic}
\end{algorithm}

For every analytical feasible reference $w_{\rm ref}\in\mathcal A$, Algorithms~\ref{alg:cvx-rf}--\ref{alg:cvx-uf} return
\begin{equation}
\RF(x)=w_{\rm ref}+r_{d_{\rm RF}}(Q)(x-w_{\rm ref}),\qquad
\UF(x)=w_{\rm ref}+s_{d_{\rm UF}}(Q)(x-w_{\rm ref}),
\label{eq:cvxlf-maps}
\end{equation}
although $w_{\rm ref}$ is never supplied or computed.  This follows by dividing the three-term recurrences for $T_j$ and $U_j$ by their normalizing factors.  Each filter evaluation uses its polynomial degree many residual evaluations; each residual evaluation is one forward and one adjoint affine call.

Define, only for analysis,
\begin{align}
\mathcal H(w)&:=F(w)+\frac L2\langle w-w_{\rm ref},P(w-w_{\rm ref})\rangle,\nonumber\\
\phi_\nu(w)&:=\mathcal H(w)+\frac\nu2\|w-w^0\|^2,\qquad 0<\nu\le L.
\label{eq:cvxlf-phi}
\end{align}
The augmentation and its gradient vanish on $\mathcal A$, while its executable gradient is
\begin{equation}
\nabla\phi_\nu(w)=\nabla F(w)+L\bigl(w-\RF(w)\bigr)+\nu(w-w^0).
\label{eq:cvxlf-phigrad}
\end{equation}
Let $a_\nu$ minimize $\phi_\nu$ on $\mathcal A$, set $p_\nu=-\nabla\phi_\nu(a_\nu)\in\mathcal N$, and let $\bar K^\top\lambda_\nu=p_\nu$ with minimum-norm $\lambda_\nu\in\Range(\bar K)$.  Comparison with $w^\star$ and Lipschitzness of $\nabla F$ give
\begin{equation}
\|a_\nu-w^0\|\le R,\quad \|a_\nu-w^\star\|\le2R,\quad
F(a_\nu)-F^\star\le\frac\nu2R^2,\quad
\|\lambda_\nu\|\le(\bar\Lambda+3)LR/\sigma.
\label{eq:cvxlf-reggeom}
\end{equation}
Set $\tau=\tfrac12\sqrt{\nu/L}$, $\eta=(12\sqrt{L\nu})^{-1}$, and $\theta=4\sqrt{L\nu}$.

\begin{algorithm}[H]\footnotesize
\caption{One augmented Cross-APAPC continuation step}\label{alg:cvx-stage}
\begin{algorithmic}[1]
\Require $x,f,p,w^0,F,\bar K,\bar v,L,\nu,\ell,\lambda_{\rm hi},d_{\rm RF}$, with $p\in\mathcal N$
\State $g\gets\tau x+(1-\tau)f$
\State $c_g\gets g-\RF(g;\bar K,\bar v,\ell,\lambda_{\rm hi},d_{\rm RF})$
\State $g_\phi\gets\nabla F(g)+Lc_g+\nu(g-w^0)$
\State $x_{1/2}\gets\bigl(x-\eta(g_\phi-\nu g+p)\bigr)/(1+\eta\nu)$
\State $c_{1/2}\gets x_{1/2}-\RF(x_{1/2};\bar K,\bar v,\ell,\lambda_{\rm hi},d_{\rm RF})$
\State $\Delta p\gets\theta c_{1/2}$, $p^+\gets p+\Delta p$
\State $x^+\gets x_{1/2}-\eta\Delta p/(1+\eta\nu)$
\State $f^+\gets g+\dfrac{2\tau}{2-\tau}(x^+-x)$
\State \Return $(x^+,f^+,p^+)$
\end{algorithmic}
\end{algorithm}

For the analytical energy
\begin{align}
E_\nu(x,f,p):={}&12\|x-a_\nu\|^2
+\frac{1}{12L\nu}\left\langle p-p_\nu,
\left(3P^\dagger-\frac{\mathrm{Id}}{1+\eta\nu}\right)(p-p_\nu)\right\rangle\nonumber\\
&+\frac{4(1-\tau)}{\nu}D_{\phi_\nu}(f,a_\nu),
\label{eq:cvxlf-energy}
\end{align}
one has
\begin{equation}
E_\nu(x^+,f^+,p^+)\le
\left(1+\frac1{24}\sqrt{\nu/L}\right)^{-1}E_\nu(x,f,p).
\label{eq:cvxlf-contract}
\end{equation}
To verify this, take the proof-only equality operator $A_P:=P^{1/2}$ and write $p=A_P^\top y$, $p_\nu=A_P^\top y_\nu$.  Algorithm~\ref{alg:cvx-stage} is exactly the APAPC step of \citet{salim2022optimal} applied to $\phi_\nu$ with valid certificates $L_{\rm in}=6L$, $\mu_{\rm in}=\nu$, $\lambda_1=3$, $\lambda_2=1/2$.  On $\mathcal N$, $\mathrm{Id}\preceq3P^\dagger-(1+\eta\nu)^{-1}\mathrm{Id}\preceq6\mathrm{Id}$, so their metric energy is precisely~\eqref{eq:cvxlf-energy} up to the common scalar $\sqrt{L\nu}$.  Their one-step inequality is valid for arbitrary $y-y_\nu\in\Range(A_P)$; no zero-dual initialization is used.  Hence the currently retained dual state is admissible, which is the point needed by the continuation argument.

\subsection{Warm starts, complete algorithm, and exact ledger}
Let $g_0=\nabla F(w^0)$ and initialize
\begin{equation}
p^0:=\UF(g_0;\bar K,0,\lambda_{\rm hi},d_{\rm UF})-g_0.
\label{eq:cvxlf-init}
\end{equation}
The weighted bound in~\eqref{eq:cvxlf-filter-bounds}, applied to $-\bar K^\top\lambda^\star$, yields
\begin{equation}
E_L(w^0,w^0,p^0)\le\left[19+\frac{(\bar\Lambda+5)^2}{2}\right]R^2.
\label{eq:cvxlf-init-energy}
\end{equation}
Moreover, if $E_\nu(x,f,p)\le R^2$ and $\nu'=\nu/2$, then retaining $p$ and resetting $x\gets f$ gives
\begin{equation}
E_{\nu'}(f,f,p)\le C_{\rm ws}R^2<178R^2<200R^2.
\label{eq:cvxlf-warm}
\end{equation}
For completeness, the estimates behind~\eqref{eq:cvxlf-warm} are
\begin{equation}
\|p-p_\nu\|\le\sqrt{12L\nu}R,\quad
D_{\phi_\nu}(f,a_\nu)\le\nu R^2/2,\quad
\|f-\Pi_{\mathcal A}f\|\le\sqrt{2\nu/L}R,
\label{eq:cvxlf-completed}
\end{equation}
and, for $a=a_\nu$, $a'=a_{\nu/2}$,
\begin{equation}
\|\nabla F(a)-\nabla F(a')\|\le\sqrt{L\nu}R,\qquad
\|p_\nu-p_{\nu/2}\|\le\tfrac52\sqrt{L\nu}R.
\label{eq:cvxlf-movement}
\end{equation}
The first line follows from~\eqref{eq:cvxlf-energy}, strong convexity, and~\eqref{eq:cvxlf-P}.  Pairing the two regularized KKT systems with $a-a'\in\Ker(\bar K)$ and using cocoercivity proves~\eqref{eq:cvxlf-movement}.  Expanding the two Bregman divergences gives the exact transfer identity
\begin{align}
D_{\phi_{\nu/2}}(f,a')-D_{\phi_\nu}(f,a)
={}&-\frac\nu4\|f-w^0\|^2+\phi_\nu(a)-\phi_\nu(a')\nonumber\\
&+\frac\nu4\|a'-w^0\|^2
+\langle p_{\nu/2}-p_\nu,f-\Pi_{\mathcal A}f\rangle.
\label{eq:cvxlf-transfer}
\end{align}
which with~\eqref{eq:cvxlf-completed}--\eqref{eq:cvxlf-movement} gives
$C_{\rm ws}=108+(\sqrt{12}+5/2)^2+8(3/4+(5/2)\sqrt2)=177.854779\ldots$.

Set
\begin{align}
B_{\rm cont}&:=\max\{200,20+(\bar\Lambda+6)^2\},\qquad
S_{\rm cont}:=\left\lceil\log_2\frac{64LR^2}{\eps}\right\rceil,\qquad \nu_s:=L2^{-s},\nonumber\\
k_s&:=\left\lceil\frac{\log B_{\rm cont}}{\log(1+2^{-s/2}/24)}\right\rceil,\qquad
k:=\max\left\{0,\left\lceil\log_2\frac{16\sqrt2(\bar\Lambda+6)LR^2}{\eps}\right\rceil\right\}.
\label{eq:cvxlf-schedule}
\end{align}

\begin{algorithm}[H]\footnotesize
\caption{Log-free smooth-convex continuation with affine-only output repair}\label{alg:cvx-continuation}
\begin{algorithmic}[1]
\Require $F,w^0,L,R,\eps,\bar\Lambda$; $\bar K,\bar v$ and, if $\bar K\ne0$, certificates $\ell,\lambda_{\rm hi}$
\If{$\bar K$ is absent}\State \Return Algorithm~\ref{alg:cvx-agd}$(F,w^0,L,R,\eps)$\EndIf
\State $d_{\rm RF}\gets\lceil\sqrt{\lambda_{\rm hi}/\ell}\rceil$, $d_{\rm UF}\gets d_{\rm RF}$; set $B_{\rm cont},S_{\rm cont},\nu_s,k_s,k$ by~\eqref{eq:cvxlf-schedule}
\State $g_0\gets\nabla F(w^0)$; $p\gets\UF(g_0;\bar K,0,\lambda_{\rm hi},d_{\rm UF})-g_0$; $x\gets w^0$; $f\gets w^0$
\For{$s=0,\ldots,S_{\rm cont}$}
\If{$s>0$}\State $x\gets f$ \Comment{retain $f$ and the current dual $p$}\EndIf
\For{$j=1,\ldots,k_s$}
\State $(x,f,p)\gets$ Algorithm~\ref{alg:cvx-stage}$(x,f,p,w^0,F,\bar K,\bar v,L,\nu_s,\ell,\lambda_{\rm hi},d_{\rm RF})$
\EndFor
\EndFor
\State $y\gets f$
\For{$j=1,\ldots,k$}\State $y\gets\RF(y;\bar K,\bar v,\ell,\lambda_{\rm hi},d_{\rm RF})$\EndFor
\State $\bar w\gets\UF(y;\bar K,\bar v,\lambda_{\rm hi},d_{\rm UF})$
\State \Return $\bar w$
\end{algorithmic}
\end{algorithm}

\begin{algorithm}[H]\footnotesize
\caption{Accelerated-gradient branch when the affine system is absent}\label{alg:cvx-agd}
\begin{algorithmic}[1]
\Require $F,w^0,L,R,\eps$
\State $x\gets w^0$, $y\gets w^0$, $t\gets1$, $N_{\rm AGD}\gets\lceil\sqrt{2LR^2/\eps}\rceil$
\For{$j=1,\ldots,N_{\rm AGD}$}
\State $x_+\gets y-L^{-1}\nabla F(y)$; $t_+\gets(1+\sqrt{1+4t^2})/2$
\State $y\gets x_++(t-1)(x_+-x)/t_+$; $(x,t)\gets(x_+,t_+)$
\EndFor
\State \Return $x$
\end{algorithmic}
\end{algorithm}

Induction using~\eqref{eq:cvxlf-contract} and~\eqref{eq:cvxlf-warm} gives $E_{\nu_s}(x,f,p)\le R^2$ at every completed stage and
$\eps/(128R^2)<\nu_{S_{\rm cont}}\le\eps/(64R^2)$. At the last stage write $z=\Pi_{\mathcal A}f$ and $n=f-z$. Then
\begin{equation}
F(z)-F^\star<8\nu_{S_{\rm cont}}R^2\le\eps/8,
\qquad \|n\|\le\sqrt{2\nu_{S_{\rm cont}}/L}R.
\label{eq:cvxlf-virtual}
\end{equation}
The executed return is
\begin{equation}
\bar w=z+e,
\qquad e=s_{d_{\rm UF}}(Q)r_{d_{\rm RF}}(Q)^k n\in\mathcal N,
\qquad
\|e\|\le\frac{\|\bar Ke\|}{\sigma}
\le\frac{\eps}{16(\bar\Lambda+6)LR}.
\label{eq:cvxlf-repair}
\end{equation}
The key step is the weighted second-kind estimate $\|\bar Ks_{d_{\rm UF}}(Q)q\|\le\sqrt\ell\|q\|$, which avoids multiplying the repair by $\|\bar K\|/\sigma$.  Using~\eqref{eq:cvxlf-reggeom}, smoothness, and the regularized KKT relation,
\begin{equation}
F(\bar w)-F(z)
\le\|\lambda_{\nu_{S_{\rm cont}}}\|\,\|\bar Ke\|+(\nu_{S_{\rm cont}}R+L\|z-a_{\nu_{S_{\rm cont}}}\|)\|e\|+\tfrac L2\|e\|^2,
\end{equation}
so~\eqref{eq:cvxlf-virtual}--\eqref{eq:cvxlf-repair} imply exactly~\eqref{eq:cvx-accuracy}.

Let $A_{\rm tot}:=\sum_{s=0}^{S_{\rm cont}} k_s$. Since $\log(1+a)\ge a/(1+a)$,
\begin{equation}
A_{\rm tot}<\frac{\sqrt{128}(1+25\log B_{\rm cont})}{1-1/\sqrt2}\,T_{\rm cvx},
\qquad
N_{\nabla f}=1+A_{\rm tot},\qquad
N_{\rm aff}=d_{\rm RF}(4A_{\rm tot}+2k+4).
\label{eq:cvxlf-ledger}
\end{equation}
Here $d_{\rm RF}\le2\widehat\chi$ and
$k\le1+\log_2(16\sqrt2(\bar\Lambda+6))+2\log_2T_{\rm cvx}$.  Hence the final repair contributes only the additive term $O_{\bar\Lambda}(\widehat\chi\log T_{\rm cvx})$, dominated by $O_{\bar\Lambda}(\widehat\chi T_{\rm cvx})$.  Thus
\begin{equation}
N_{\nabla f}=O_{\bar\Lambda}(T_{\rm cvx}),\qquad
N_{\rm aff}=O_{\bar\Lambda}(\widehat\chi T_{\rm cvx}),\qquad
N_{\rm comm}=O_{\bar\Lambda}(\widehat\chi\sqrt{\kappa_W}\,T_{\rm cvx})
\label{eq:cvxlf-upper}
\end{equation}
under the supplied normalized-channel realization.  Thus the affine and communication bounds have no multiplicative target-accuracy logarithm; no raw $A/B/C/D/W$ theorem is inferred beyond the primitive conversion already stated in Appendix~\ref{app:resources}.

\subsection{Radius-uniform lower bounds}
We give a finite construction that attains the matching affine and communication orders while controlling the \emph{full execution radius} uniformly in the cross-angle parameter.

For even $N_c$, let $R_1,R_2$ be the alternating normalized edge matchings
\begin{equation}
(R_1u)_j=(u_{2j}-u_{2j+1})/\sqrt2,\qquad
(R_2u)_j=(u_{2j-1}-u_{2j})/\sqrt2,
\label{eq:cvxlf-matchings}
\end{equation}
with the natural index ranges.  Writing $P_j=R_j^\top R_j$ gives
$P_1+P_2=\tfrac12L_{{\rm path},N_c}=:P_{\rm path}$.  Let $c,\mu>0$, $\delta=3\mu/c$, $\alpha=\operatorname{arcosh}(1+\delta)\le1/4$, $r=e^{-\alpha}$,
\begin{equation}
N_c=2\lceil10/\alpha\rceil,\quad \rho_c=r^{2N_c},\quad a=1-r,\quad
v_j=r^{j-1}+r^{2N_c-j},\quad Z_c=(\sum_jv_j^2)^{-1/2}.
\label{eq:cvxlf-reflected}
\end{equation}
Then $u^\star=Z_cv$ is the unique minimizer of
\begin{equation}
\frac{3\mu}{2}\|u\|^2+\frac c2\langle u,P_{\rm path}u\rangle-\beta u_1,
\qquad \beta=\frac{cZ_ca(1-\rho_c)}{2r},
\label{eq:cvxlf-chain}
\end{equation}
and $\|u^\star\|=1$.  The reflected term makes the last-coordinate KKT equation exact: $v_{N_c+1}=v_{N_c}$; the first-coordinate equation is
$(\delta+1/2)v_1-v_2/2=a(1-\rho_c)/(2r)$.  Hence there is no infinite-chain truncation residue.  Moreover
\begin{equation}
E_c:=\langle u^\star,P_{\rm path}u^\star\rangle\le\frac{a^2}{2(1-\rho_c)},
\qquad
\sum_{j>s}(u_j^\star)^2\ge0.998/e,
\quad s=\lfloor(2\alpha)^{-1}\rfloor.
\label{eq:cvxlf-tail}
\end{equation}
The energy bound follows from $0\le v_j-v_{j+1}\le ar^{j-1}$; the tail bound follows from the exact normalization
$\|v\|^2=(1-r^{4N_c})/(1-r^2)+2N_cr^{2N_c-1}$ and $20\le\alpha N_c<20.5$.

For communication, use $3M$ agents in three consecutive groups, put $t\ge1$, $\kappa=L/\mu\ge100$, and take~\eqref{eq:cvxlf-chain} with $c=Lt^2$.  The outer groups have constraints $x_i=tR_1u_i$ and $x_i=tR_2u_i$, respectively, while all $u_i$ are in consensus; the objective is
\begin{equation}
f_i=\begin{cases}
\tfrac L2\|x_i\|^2+\tfrac\mu2\|u_i\|^2-\beta u_{i,1},&i\in V_1,\\
\tfrac\mu2\|u_i\|^2,&i\in V_2,\\
\tfrac L2\|x_i\|^2+\tfrac\mu2\|u_i\|^2,&i\in V_3.
\end{cases}
\label{eq:cvxlf-hardobj}
\end{equation}
Its unique feasible optimizer has $u_i=u^\star$ and $x_i=tR_ju^\star$.  Crucially, its \emph{full} squared radius from zero is
\begin{equation}
R_0^2=M(3+t^2E_c),\qquad 3M\le R_0^2\le M(3+4/\kappa),
\label{eq:cvxlf-radius}
\end{equation}
because $t^2E_c\le4/\kappa$.  Thus the hard shared tail remains a fixed fraction of the full radius even as the channels become nearly parallel.

Normalize the link channel by $h=\sqrt{1+t^2}$ and the consensus channel by the disagreement projector.  Then their row spaces are disjoint and
\begin{equation}
c_F=t/h,\qquad \chix=h+t,
\qquad \sigma^2=1-t/h=1/[h(h+t)].
\label{eq:cvxlf-hardgeom}
\end{equation}
A valid KKT multiplier is explicit. For the normalized link blocks take $\lambda_{1,i}=-Lh x_i^\star$ and for the consensus block take
\[
d_i=\begin{cases}
\beta e_1-\mu u^\star-Lt^2P_1u^\star,&i\in V_1,\\
-\mu u^\star,&i\in V_2,\\
-\mu u^\star-Lt^2P_2u^\star,&i\in V_3.
\end{cases}
\]
The finite-chain stationarity equation gives $\sum_i d_i=0$, while the link stationarity equations are $Lx_i^\star+\lambda_{1,i}/h=0$. Hence $(\lambda_1,d)$ is a genuine multiplier. Using $t\sqrt{E_c}\le2/\sqrt\kappa$, $ta\le\sqrt{6/\kappa}$, and $R_0\ge\sqrt{3M}$ yields
\begin{equation}
\frac{\sigma\|\lambda^\star\|}{LR_0}
\le\frac{7/\sqrt\kappa+2/\kappa}{\sqrt3}<1.
\label{eq:cvxlf-hardmult}
\end{equation}
Only $V_1$ initially contains the seed $e_1$; local closure in $V_1$ uses $P_1$ and in $V_3$ uses $P_2$.  Since the two matchings alternately expose successive coordinates, coordinate $j$ cannot appear before $(j-1)\Delta$ neighbor steps when the outer-group distance is $\Delta$.  Combining this support invariant with~\eqref{eq:cvxlf-tail}--\eqref{eq:cvxlf-radius} gives $\Omega(Mt\sqrt\kappa)$ communication rounds on the path. Here $\kappa_W=\cot^2(\pi/(6M))=\Theta(M^2)$. For bounded network budget the same construction on the complete six-node graph has $\kappa_W=1$ and gives $\Omega(t\sqrt\kappa)$. For bounded angle use only consensus equalities and the same reflected chain with $c=L-\mu$ and local objectives
\[
f_i(u_i)=\begin{cases}
\frac\mu2\|u_i\|^2+\frac{L-\mu}{2}\langle u_i,P_1u_i\rangle-\beta u_{i,1},&i\in V_1,\\
\frac\mu2\|u_i\|^2,&i\in V_2,\\
\frac\mu2\|u_i\|^2+\frac{L-\mu}{2}\langle u_i,P_2u_i\rangle,&i\in V_3.
\end{cases}
\]
Then $R_0^2=3M$, $\chix=1$, and the minimum-norm consensus multiplier obeys
\[
\frac{\sigma\|\lambda^\star\|}{LR_0}
\le\frac{2\sqrt{6/(\kappa-1)}+\sqrt3/\kappa}{\sqrt3}<1
\qquad(\kappa\ge100),
\]
because the stacked gradient at the feasible optimizer is normal to consensus and its seed, scalar, and matching components satisfy the displayed bound. The identical alternating-exposure argument gives $\Omega(M\sqrt\kappa)$ rounds on the path and $\Omega(\sqrt\kappa)$ on the complete six-node graph.

For the affine resource, use one physical agent and a virtual path with $N_{\rm virt}=3M$ blocks.  Split its incidence rows into odd and even matchings and set
\begin{equation}
\widehat K_1=2^{-1/2}E_{\rm odd}\otimes I_{N_c},\qquad
\widehat K_2=2^{-1/2}E_{\rm even}\otimes I_{N_c}.
\label{eq:cvxlf-affinehard}
\end{equation}
Let $\bar K:=\col(\widehat K_1,\widehat K_2)$. Both channels are individually normalized, while
\begin{equation}
\chix=\cot\frac{\pi}{2N_{\rm virt}}=\Theta(N_{\rm virt}),\qquad \kappa_W=1.
\label{eq:cvxlf-affinegeom}
\end{equation}
A $\bar K$ or $\bar K^\top$ action crosses at most one virtual edge, so the same alternating-frontier argument and~\eqref{eq:cvxlf-tail} require $\Omega(N_{\rm virt}\sqrt\kappa)$ affine calls even if all gradient calls are free.  A six-block one-channel member handles bounded $\Xi$ and still requires $\Omega(\sqrt\kappa)$ affine calls.

Finally rescale amplitudes so the full initial radius is exactly $R$ and set
\begin{equation}
\mu:=24\eps/R^2,\qquad \kappa=L/\mu=T_{\rm cvx}^2/24.
\label{eq:cvxlf-nesting-mu}
\end{equation}
All row spaces, support frontiers, and the scale $\sigma\|\lambda^\star\|/(LR)$ are invariant under this amplitude rescaling.  Whenever~\eqref{eq:cvx-accuracy} holds and the multiplier scale is at most one, strong convexity and KKT give
\begin{equation}
F(w)-F^\star\ge-\eps+\frac\mu2\|w-w^\star\|^2,
\label{eq:cvxlf-nesting}
\end{equation}
so an accurate point must lie within $R/2$ of the optimizer.  For $T_{\rm cvx}\ge50$, $\kappa\ge100$, and the preceding fixed-contraction obstructions therefore yield
\begin{equation}
\mathfrak C^{\rm cvx}_{\rm aff}(\eps)=\Omega(\Xi T_{\rm cvx}),\qquad
\mathfrak C^{\rm cvx}_{\rm comm}(\eps)=\Omega(\Xi\sqrt\Omega\,T_{\rm cvx}).
\label{eq:cvxlf-lower}
\end{equation}
For $\Xi\ge3$ take $t=(\Xi-\Xi^{-1})/2$; for $\Omega\ge16$ take $M=\lfloor\sqrt\Omega/2\rfloor$. The bounded-budget constructions above lose only universal constants. The separate gradient lower bound is the standard smooth convex zero-chain with all affine constraints absent, giving $\Omega(T_{\rm cvx})$. Together with~\eqref{eq:cvxlf-upper}, this proves Theorem~\ref{thm:smooth-convex-logfree} for every fixed $\bar\Lambda\ge1$ and threshold $T_{\rm cvx}\ge50$.

\paragraph{Scope and relation to prior algorithms.}
The continuation wrapper is not claimed as the first smooth-convex APAPC acceleration.  \citet{condat2026accelerated}, Theorem~3.11 and Remark~3.12, already give smooth-convex primal--dual acceleration under a linear equality and explicitly discuss quadratic equality augmentation.  The contribution here is the executable CC-MAC wrapper: fixed normal augmentation, weighted range-valued initialization, retained-dual warm starts, the weighted final repair, and the radius-uniform resource-separated lower constructions that together yield the exact class-level orders above.  The result assumes fixed supplied spectral certificates and exact arithmetic; certificate acquisition and arbitrary time-varying inexact products are outside the theorem.

\section{Nonsmooth Extension: Cross-Sliding Upper Bounds}
\label{app:nonsmooth-proof}
Assume $F$ is finite and convex on the ambient Euclidean space, the affine system is feasible, $\|w^0-w^\star\|\le R$, and
\begin{equation}
M_{3R}:=\sup_{w\in B(w^\star,3R)}\sup_{g\in\partial F(w)}\|g\|<\infty.
\label{eq:local-lipschitz}
\end{equation}
For nonzero $\bar K$, with $\sigma=\sigma_{\min}^+(\bar K)$, the target criterion is
\begin{equation}
F(\bar w)-F^\star\le\eps,\qquad (M_{3R}/\sigma)\|\bar K\bar w-\bar v\|\le\eps.
\label{eq:ns-accuracy}
\end{equation}
If $\bar K=0$ the second inequality is omitted. Set $T_{\rm ns}=M_{3R}R/\eps$ and $T_{\rm nsc}=M_{3R}/\sqrt{\mu_f\eps}$.

We prove Theorem~\ref{thm:nonsmooth-upper}.  Throughout this appendix, $\bar K$ denotes the separately normalized joint affine operator, $\sigma:=\sigma_{\min}^+(\bar K)$, and $L_{\bar K}:=\|\bar K\|$.  The equality is scale-invariant: replacing $(\bar K,\bar v)$ by $(a\bar K,a\bar v)$ for $a>0$ leaves the feasible set, $\kappa(\bar K)$, and the criterion~\eqref{eq:ns-accuracy} unchanged.

\subsection{Dual radius and a linear-operator sliding primitive}
\begin{lemma}[Minimum-norm multiplier bound under local regularity]
\label{lem:ns-dual-radius}
Let $F:\R^d\to\R$ be finite and convex, let $w^\star$ solve $\min\{F(w):\bar Kw=\bar v\}$, and choose a KKT multiplier $\lambda^\star\in\Range(\bar K)$ of minimum norm.  Under~\eqref{eq:local-lipschitz},
\begin{equation}
\|\lambda^\star\|\le \frac{M_{3R}}{\sigma}.
\label{eq:dual-radius-ns}
\end{equation}
\end{lemma}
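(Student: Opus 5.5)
The plan is to combine the KKT stationarity condition with a subgradient chosen at a well-placed point, then invert $\bar K^\top$ on its range using $\sigma$. Since $F$ is finite and convex on the whole space and the constraints are affine, Slater-type qualification is automatic. Hence $w^\star$ admits a KKT certificate: there is $g^\star\in\partial F(w^\star)$ with $g^\star+\bar K^\top\lambda=0$ for some $\lambda$. Projecting $\lambda$ onto $\Range(\bar K)=\Ker(\bar K^\top)^\perp$ leaves $\bar K^\top\lambda$ unchanged. So the minimum-norm multiplier associated with this $g^\star$ lies in $\Range(\bar K)$.

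For any $\lambda\in\Range(\bar K)$ we have $\|\bar K^\top\lambda\|\ge\sigma\|\lambda\|$, because $\bar K^\top$ restricted to $\Range(\bar K)$ has smallest singular value $\sigma_{\min}^+(\bar K)=\sigma$ (use the thin SVD). Therefore
\[
\sigma\|\lambda^\star\|\le\|\bar K^\top\lambda^\star\|=\|g^\star\|\le\sup_{g\in\partial F(w^\star)}\|g\|\le M_{3R},
\]
and the last inequality holds because $w^\star\in B(w^\star,3R)$. This gives \eqref{eq:dual-radius-ns}.

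The one point needing care is that the multiplier set can be non-unique when $\partial F(w^\star)$ is not a singleton. The minimum-norm multiplier is taken over all KKT pairs, and every admissible pair satisfies the same chain of inequalities. So the minimum over them is also bounded by $M_{3R}/\sigma$, and the claim holds for whichever selection the paper intends. I expect the main (mild) obstacle to be justifying existence of the KKT multiplier. I would handle it by the standard fact that for a finite convex $F$ and an affine feasible set, $\partial(F+\iota_{\mathcal A})(w^\star)=\partial F(w^\star)+\Range(\bar K^\top)$. Given that, $0\in\partial F(w^\star)+\Range(\bar K^\top)$ at the minimizer, which is exactly the needed certificate.
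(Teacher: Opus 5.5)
Your proposal is correct and follows essentially the same route as the paper: obtain the KKT relation $\bar K^\top\lambda^\star=-g^\star$ with $g^\star\in\partial F(w^\star)$ from the affine-constraint qualification for finite convex $F$, place $\lambda^\star$ in $\Range(\bar K)$, and bound $\sigma\|\lambda^\star\|\le\|\bar K^\top\lambda^\star\|=\|g^\star\|\le M_{3R}$. Your remark that the bound holds for every multiplier in $\Range(\bar K)$, so the choice among non-unique multipliers does not matter, is a correct extra observation that the paper leaves implicit.
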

\begin{proof}
Because $F$ is finite on the ambient space, the standard affine-constraint qualification gives
$0\in\partial F(w^\star)+\bar K^\top\lambda^\star$; redundant multiplier components may be projected away, so $\lambda^\star\in\Range(\bar K)$.  Hence for some $g^\star\in\partial F(w^\star)$,
$\bar K^\top\lambda^\star=-g^\star$.  The restriction of $\bar K^\top$ to $\Range(\bar K)$ has smallest singular value $\sigma$, and therefore
$\sigma\|\lambda^\star\|\le\|\bar K^\top\lambda^\star\|=\|g^\star\|\le M_{3R}$.
\end{proof}

Set $R_\lambda:=2M_{3R}/\sigma$ and $\mathcal Y:=\{\lambda:\|\lambda\|\le R_\lambda\}$.  Define
\begin{equation}
\operatorname{Gap}(w):=F(w)-F^\star+\max_{\lambda\in\mathcal Y}\langle\lambda,\bar Kw-\bar v\rangle
=F(w)-F^\star+R_\lambda\|\bar Kw-\bar v\|.
\label{eq:restricted-gap}
\end{equation}
The KKT inequality
$F(w)-F^\star\ge-\|\lambda^\star\|\|\bar Kw-\bar v\|$ gives
\begin{equation}
\operatorname{Gap}(w)\ge F(w)-F^\star,
\qquad
\operatorname{Gap}(w)\ge\frac{R_\lambda}{2}\|\bar Kw-\bar v\|.
\label{eq:gap-to-accuracy}
\end{equation}
Thus $\operatorname{Gap}(w)\le\eps$ implies~\eqref{eq:ns-accuracy} up to a universal factor two.

We use the bilinear recursive-sliding theorem of \citet{nguyen2026holder}.  Suppose a phase center $w^0$ satisfies $\|w^0-w^\star\|\le R_s\le R$ and restrict the phase to the Euclidean ball
$\mathcal X_s:=B(w^0,2R_s)$.  Every queried primal point then lies in $B(w^\star,3R)$, so~\eqref{eq:local-lipschitz} controls every subgradient used in the phase.  Rescale
\begin{equation}
\bar K_a:=a_s\bar K,\qquad \bar v_a:=a_s\bar v,\qquad a_s:=\frac{R_\lambda}{R_s},
\label{eq:dcs-balance-scale}
\end{equation}
and write $\lambda=a_sy$.  The saddle problem is
\begin{equation}
\min_{w\in\mathcal X_s}\max_{\|y\|\le R_s}
F(w)+\langle y,\bar K_aw-\bar v_a\rangle.
\label{eq:scaled-saddle-phase}
\end{equation}
Since $\|\lambda^\star/a_s\|\le R_s/2$, the restricted dual set contains an optimal multiplier.  In the notation of \citet[Section~4, degenerate case]{nguyen2026holder}, the primal component has H\"older exponent $0$ and constant $O(M_{3R})$, while the bilinear constant is $\|\bar K_a\|$.  Their separated componentwise complexity therefore specializes to
\begin{equation}
N_{\bar K}=O\!\left(\frac{L_{\bar K}R_\lambda R_s}{\eta}\right),
\qquad
N_{\partial f}=O\!\left(\frac{M_{3R}^2R_s^2}{\eta^2}\right)
\label{eq:balanced-sliding}
\end{equation}
for restricted gap target $\eta$.  These are calls to the normalized $\bar K,\bar K^\top$ interface and subgradient calls, respectively; primitive realization is accounted for in Appendix~\ref{app:resources}.

\begin{algorithm}[h]
\caption{\textsc{Cross-Sliding}: one balanced nonsmooth phase}
\label{alg:cross-sliding}
\begin{algorithmic}[1]
\Require $F$, $(\bar K,\bar v)$, $M_{3R}$, center $w^0$, radius $R_s$, target restricted gap $\eta$, spectral norm/gap bounds, exact projections onto the phase balls
\State $a_s\gets R_\lambda/R_s$, $\bar K_a\gets a_s\bar K$, $\bar v_a\gets a_s\bar v$
\State $\mathcal X_s\gets B(w^0,2R_s)$, $\mathcal Y_s\gets\{y:\|y\|\le R_s\}$
\State $(\bar w,\bar y)\gets\operatorname{RS}(F,\,\bar K_a,\,\bar v_a,\,\mathcal X_s,\,\mathcal Y_s,\,\eta)$
\State \Return $\bar w$
\end{algorithmic}
\end{algorithm}
Here $\operatorname{RS}$ is the recursive-sliding method of \citet[Section~4, degenerate bilinear case]{nguyen2026holder} with the displayed primal and dual balls, the supplied subgradient and linear-operator constants, and termination budget $\eta$. The theorem-level count includes calls to its projection oracles. When vectors are distributed by blocks, projection onto a Euclidean ball requires a global norm reduction; that communication is not hidden inside a normalized affine call.

\subsection{Proof of Theorem~\ref{thm:nonsmooth-upper}}
For the convex case take $R_s=R$ and $\eta=\eps$ in~\eqref{eq:balanced-sliding}.  Lemma~\ref{lem:ns-dual-radius} gives $R_\lambda\le2M_{3R}/\sigma$.  Fixed-accuracy separate normalization gives
$L_{\bar K}/\sigma=\sqrt{\kappa(\bar K)}=O(\chix)$, hence
\[
N_{\bar K}=O\!\left(\chix\frac{M_{3R}R}{\eps}\right)=O(\chix T_{\rm ns}),
\qquad
N_{\partial f}=O(T_{\rm ns}^2).
\]
Equation~\eqref{eq:gap-to-accuracy} gives~\eqref{eq:ns-accuracy}.

For the strongly convex case restart this phase.  Let $R_0=R$ and at stage $s$ request
$\eta_s:=\mu_fR_s^2/8$.  If $\operatorname{Gap}(w_s)\le\eta_s$, strong convexity and the KKT inequality imply
\[
\operatorname{Gap}(w_s)
\ge \frac{\mu_f}{2}\|w_s-w^\star\|^2
 +(R_\lambda-\|\lambda^\star\|)\|\bar Kw_s-\bar v\|,
\]
so $\|w_s-w^\star\|\le R_s/2$.  Thus $R_{s+1}=R_s/2$ is valid.  Equation~\eqref{eq:balanced-sliding} yields
\[
N_{\bar K}^{(s)}=O\!\left(\frac{L_{\bar K}R_\lambda}{\mu_fR_s}\right),
\qquad
N_{\partial f}^{(s)}=O\!\left(\frac{M_{3R}^2}{\mu_f^2R_s^2}\right).
\]
The geometric sums are dominated by the final stage.  Stopping at $R_S^2\asymp\eps/\mu_f$ gives
\[
N_{\bar K}=O\!\left(\frac{L_{\bar K}R_\lambda}{\sqrt{\mu_f\eps}}\right)
=O(\chix T_{\rm nsc}),
\qquad
N_{\partial f}=O(T_{\rm nsc}^2).
\]
For any fixed $J$, the same normalized-affine proof replaces $\chix$ by $\chiJ$. A physical-round statement would additionally have to implement and count the global ball projections and scalar reductions, as well as the normalized channels; the certified smooth-SC primitive lift in Appendix~\ref{app:resources} is not reused here without that accounting and a separate lift-radius argument. \qed

\subsection{What is and is not proved on the nonsmooth lower-bound side}
The standard centralized oracle constructions give the first-order lower bounds quoted in Theorem~\ref{thm:nonsmooth-upper}; they are obtained on unconstrained (hence admissible) subclasses and do not involve the affine oracle.  A matching resource-separated lower bound
$N_{\rm aff}=\Omega(\chix T_{\rm ns})$ (and its communication analogue) requires a hard family in which \emph{both} the subgradient oracle and the affine oracle respect the same cross-angle information frontier.  A naive coordinate-dilation argument is insufficient, because a subgradient oracle may expose a terminal micro-coordinate before the affine path has propagated to it.  We therefore leave the matching $\chix$-dependent nonsmooth affine/communication lower bound open rather than assuming such a frontier without proof.

\section{Primitive Resource Accounting for the Smooth-SC Solver}
\label{app:resources}
\paragraph{Charged-operation ledger.}
\begin{center}
\scriptsize
\begin{tabular}{@{}p{0.20\textwidth}p{0.60\textwidth}p{0.12\textwidth}@{}}
\toprule
Resource & One charged operation & Count \\
\midrule
First-order oracle & One parallel local gradient (or subgradient in the nonsmooth extension). & $N_{\nabla f}$ or $N_{\partial f}$ \\
Normalized affine oracle & Apply the separately normalized joint stack or its adjoint. & $N_{\rm aff}$ \\
Communication & One synchronous neighbor round / sparse gossip multiplication. & $N_{\rm comm}$ \\
Primitive implementation & One block-parallel original $A,B,C,D$ family product or adjoint. & $N_A,\ldots,N_D$ \\
\bottomrule
\end{tabular}
\end{center}

This appendix concerns the executable smooth strongly convex solver of Theorem~\ref{thm:algorithm}. The convex and nonsmooth extension theorems use their own normalized-channel ledgers; no unchanged raw-lift ledger is claimed for those regimes. All logarithmic factors below are $\log_+(R_E/\eps)$ for the supplied initial primal--dual energy certificate. Fix $0<\delta_{\rm wh}<1$ independently of $\eps$ and use
the certified construction in Appendix~\ref{app:ccmac-normalization}.
We count block-parallel products by the original matrices
$A,B,C,D$ and their transposes, and raw physical neighbor rounds.
For one normalized forward \emph{or} adjoint call, the upper
bounds are
\begingroup\scriptsize
\[
\begin{array}{c|ccccc}
 & A/A^\top & B/B^\top & C/C^\top & D/D^\top & \text{rounds}\\ \hline
\widehat L_1,\widehat L_1^\top
 & O(\sqrt{\bar k_1}) & O(\sqrt{\bar k_1})
 & O(\sqrt{\bar k_1}\sqrt{\bar\kappa_C}) & 0
 & O(\sqrt{\bar k_1}\sqrt{\bar\kappa_W})\\
\widehat L_2,\widehat L_2^\top
 & 0&0&0&O(\sqrt{\bar k_2})
 & O(\sqrt{\bar k_2}\sqrt{\bar\kappa_W})
\end{array}
\]
\endgroup
Since $Q_jv=\mathcal L_j(\mathcal L_j^\top v)$, degree-$N_j^{\rm wh}$ normalization uses $2N_j^{\rm wh}+1=O(\sqrt{\bar k_j})$ channel actions. The extra factors $\sqrt{\bar\kappa_C}$ and $\sqrt{\bar\kappa_W}$ arise only from $C_\star$ and $\widetilde W$, respectively. With equal unit cost for the four matrix families, set
\begin{equation}
\begin{aligned}
s_{\rm CL}
 &=O\!\left(\sqrt{\bar k_1}(1+\sqrt{\bar\kappa_C})\right),
& s_{\rm SH}&=O(\sqrt{\bar k_2}),\\
\Lambda_{\rm in}&:=s_{\rm CL}+s_{\rm SH},
& \Gamma_{\rm in}&:=\sqrt{\bar k_1}+\sqrt{\bar k_2}.
\end{aligned}
\label{eq:explicit-normalization-costs}
\end{equation}
The budgets absorb universal implementation constants; $\Lambda_{\rm in}$ counts raw affine work and $\Gamma_{\rm in}$ communication-bearing channel work. Absent matrix families contribute zero, and for $n=1$ all communication counts vanish. For the supplied $\Xi\ge\chix$, put
$T_{\Xi,E}:=\sqrt{\kappa_f}\,\Xi\log_+(R_E/\eps)$.
The complete smooth strongly convex solve upper bounds are
\begin{equation}
\begin{aligned}
N_{\nabla f}&=O\!\left(\sqrt{\kappa_f}\log_+(R_E/\eps)\right),\\
N_A,\ N_B&=O(T_{\Xi,E}\sqrt{\bar k_1}),\\
N_C&=O(T_{\Xi,E}\sqrt{\bar k_1}\sqrt{\bar\kappa_C}),\\
N_D&=O(T_{\Xi,E}\sqrt{\bar k_2}),\\
N_{\rm aff}^{\rm prim}&=O(T_{\Xi,E}\Lambda_{\rm in}),\\
N_{\rm comm}&=O(T_{\Xi,E}\Gamma_{\rm in}\sqrt{\bar\kappa_W}).
\end{aligned}
\label{eq:explicit-primitive-complexity}
\end{equation}
Penalty-gradient products and one-time transformed right-hand sides are absorbed above; certificate acquisition is a separate setup cost under Assumption~\ref{ass:spectral-certificates}. Constant-factor-tight certificates, including $\Xi=O(\chix)$, replace barred ratios by $k_1,k_2,\kappa_C,\kappa_W$ and $T_{\Xi,E}$ by $O(\sqrt{\kappa_f}\chix\log_+(R_E/\eps))$. Fixed $\delta_{\rm wh}$ adds no target-accuracy logarithm.

\end{document}